\newcommand{\rev}[1]{{\color{black} #1}}
\newcommand{\R}{\mathbb{R}} 
\newcommand{\K}{\mathbf{K}} 
\newcommand{\N}{\mathbb{N}}
\newcommand{\A}{\mathcal{A}}
\newcommand{\0}{\mathbf{0}}
\newcommand{\psd}{\mathbb{S}}
\newcommand{\vs}{\mathcal{V}}
\newcommand{\es}{\mathcal{E}}
\newcommand{\gs}{\mathcal{G}}
\newcommand{\cC}{\mathcal{C}}
\def\bB{{\mathbf{B}}}
\def\bG{{\mathbf{G}}}
\def\bm{{\mathbf{m}}}
\def\be{{\mathbf{e}}}
\def\a{{\boldsymbol{\alpha}}}
\def\b{{\boldsymbol{\beta}}}
\def\g{{\boldsymbol{\gamma}}}
\def\x{{\boldsymbol{x}}}
\def\y{{\boldsymbol{y}}}
\def\bq{{\boldsymbol{q}}}
\def\be{{\boldsymbol{e}}}
\def\bv{{\boldsymbol{v}}}
\def\cA{{\mathcal{A}}}
\def\cB{{\mathcal{B}}}
\def\G{{\mathcal{G}}}
\def\bS{{\mathbf{S}}}
\def\cs{{\mathrm{cs}}}
\def\var{\hbox{\rm{var}}}
\newcommand{\ud}{\mathrm{d}}
\newcommand{\mG}{\mathcal{G}}
\newcommand{\mC}{\mathcal{C}}
\newcommand{\mL}{\mathscr{L}}
\newcommand{\mI}{\mathcal{I}}
\newcommand{\mJ}{\mathcal{J}}
\newcommand{\mS}{{\mathbb S}}
\newcommand{\tr}[1]{\mbox{\upshape tr}\left(#1\right)}
\newcommand{\mE}{\mathcal{E}}
\DeclarePairedDelimiter{\abs}{\lvert}{\rvert}
\DeclarePairedDelimiter{\norm}{\lVert}{\rVert}
\DeclarePairedDelimiter{\rank}{\mathrm{rank}(}{)}
\DeclarePairedDelimiter{\diag}{\mathrm{diag}(}{)}
\DeclarePairedDelimiterX{\inp}[2]{\langle}{\rangle}{#1, #2}
\DeclarePairedDelimiter{\supp}{\mathrm{supp}(}{)}
\newtheorem{thm}{Theorem}[section]
\newtheorem{prop}[thm]{Proposition}
\newtheorem{cor}[thm]{Corollary}
\newtheorem{defn}{Definition}[section]
\newtheorem{exmp}{Example}[section]
\newtheorem{rem}{Remark}[section]
\renewcommand\footnotemark{}
\title{\Large \textbf{Sparse Polynomial Matrix Optimization}}
\author{Jared Miller$^1$, Jie Wang$^2$, Feng Guo$^3$
\thanks{$^1$J. Miller is with the Automatic Control Laboratory (IfA), Department of Information Technology and Electrical Engineering (D-ITET), ETH Z\"{u}rich, Physikstrasse 3, 8092, Z\"{u}rich, Switzerland (e-mail: jarmiller@control.ee.ethz.ch).}
\thanks{$^2$J. Wang is with State Key Laboratory of Mathematical Sciences, Academy of Mathematics and Systems Science, Chinese Academy of Sciences, Beijing, China (e-mail: wangjie212@amss.ac.cn).}
\thanks{$^3$ F. Guo is with the
School of Mathematical Sciences, Dalian University of Technology, Dalian 116024, Liaoning Province, China
(e-mail: fguo@dlut.edu.cn).}
\thanks{J. Miller was partially supported by the Swiss National Science Foundation Grant 51NF40\_180545 under NCCR Automation. J. Wang was supported by National Key R\&D Program of China under grant No. 2023YFA1009401, the Strategic Priority Research Program of the Chinese Academy of Sciences XDB0640000 \& XDB0640200, and the National Natural Science Foundation of China under grant No. 12201618 \& 12171324. F. Guo was supported by the Chinese National Natural Science Foundation under grant 12471478.}}
\begin{document}

\maketitle


\begin{abstract}
\label{sec:abstract}
\Iac{PMI} is a formula asserting that a polynomial matrix is positive semidefinite. 
\Ac{PMO} concerns minimizing the smallest eigenvalue of a symmetric polynomial matrix subject to a tuple of \acp{PMI}.
This work explores the use of sparsity methods in reducing the complexity of sum-of-squares based methods in verifying \acp{PMI} or solving \ac{PMO}. In the unconstrained setting, Newton polytopes can be employed to sparsify the monomial basis, resulting in smaller semidefinite programs. 
In the general setting, we show how to exploit different types of sparsity (term sparsity, correlative sparsity, matrix sparsity) encoded in polynomial matrices to derive sparse semidefinite programming relaxations for \ac{PMO}. 
For term sparsity, \rev{we show that the block structures of the term sparsity iterations with maximal chordal extensions converge to the one determined by PMI sign symmetries.}
For correlative sparsity, unlike the scalar case, we provide a counterexample showing that asymptotic convergence does not hold under the Archimedean condition and the running intersection property. By employing the theory of matrix-valued measures, we establish several results on detecting global optimality and retrieving optimal solutions under correlative sparsity.
The effectiveness of sparsity methods on reducing computational complexity is demonstrated on various examples of \ac{PMO}.
\end{abstract}
\section{Introduction}
\label{sec:introduction}
This paper is concerned with improving the tractability of verifying of polynomial matrix inequalities (PMI) and of solving polynomial matrix optimization (PMO) problems.
\rev{A \ac{PMI} is a formula asserting that a polynomially-defined symmetric matrix is \ac{PSD} over some given set \cite{nie2011polynomial}.} \acp{PMI} are a generalization 
of (scalar) polynomial inequalities. 
Linear matrix inequalities are specific instances of \acp{PMI} restricted to degree one \cite{helton2007linear}, but \acp{PMI} may describe nonconvex sets. Applications of \acp{PMI} include finding stability regions of autoregressive linear systems \cite{ henrion2011inner}, sizing beams to minimize the cost of frame topologies under structural constraints \cite{tyburec2021global}, performing frequency-domain system identification of low-order linear systems under stability constraints \cite{abdalmoaty2023frequency}, and simplifying optimal control tasks in the case where the applied input is constrained to lie in a semidefinite-representable set \cite{miller2021analysis}. Refer to \cite{chesi2010lmi} for a survey of linear matrix inequality methods for solving polynomial optimization problems arising in control.

Polynomial optimization problems (POP) are programs where the objective and all constraints are defined by polynomials (forming a basic semialgebraic set). All POPs can be translated into equivalent programs with scalar linear objectives for which the constraint sets are described by polynomial inequality constraints. 
Checking nonnegativity of a polynomial over a basic semialgebraic set is generically an NP-hard problem, which extends (by reformulations in terms of membership oracles) to a generic NP-hardness of solving POPs \cite{murty1985some}. Methods to verify polynomial nonnegativity include satisfiability solvers  (e.g. dReal \cite{gao2013dreal}) and \ac{SOS} methods \cite{blekherman2012semidefinite}. 

If a real-valued polynomial with $n$ variables can be represented as an \ac{SOS} of other real-valued polynomials, then the original polynomial is certifiably nonnegative over the space $\R^n$. The set of SOS polynomials over $\R^n$ is strictly contained in the set of nonnegative polynomials over $\R^n$ \cite{blekherman2006there}. 
Checking if a polynomial is an \ac{SOS} over $\R^n$ can be accomplished by solving a finite-dimensional convex optimization problem, which can be numerically treated using \ac{SDP} \cite{blekherman2012semidefinite}. SOS methods can also be used in the constrained setting to verify nonnegativity over basic semialgebraic sets. If the constraint set is compact and its representing constraints satisfy a real-algebraic Archimedean structure, then every positive polynomial over the basic semialgebraic set can have its positivity verified using SOS methods \cite{putinar1993compact} (the polynomial degree needed to perform this verification is polynomial in the degree $d$ for fixed $n$ \cite{baldi2023effective}). 

These SOS methods for verifying polynomial nonnegativity have been extended to matrix \ac{SOS} for \acp{PMI} \cite{hol2005sum}, Hermitian \ac{SOS} for complex polynomials \cite{wang2022exploiting}, sums of Hermitian squares for noncommutative polynomials \cite{helton2002positive, cafuta2012noncommutative}. 
and flag SOS for graph density polynomials \cite{raymond2018symmetry}. 
The computational complexity of SOS implementation using SDP is determined by the size and multiplicity of the largest PSD matrix constraint \cite{lasserre2009moments}. In the context of a  $p \times p$ \ac{PMI} with $n$ variables where all polynomials are restricted to degree $2d$, the size of the largest PSD matrix (under the dense monomial basis) is $p\binom{n+d}{d}$. This matrix size (and the corresponding time to solve SDPs) suffer in a jointly polynomial manner as $p$, $n$, and $d$ grow. 
Polynomial nonnegativity over sets admitting tractable Fourier analyses (e.g., ball, hypercube) can be accomplished in an optimization-free manner by checking a possibly exponential number of linear inequalities \cite{cristancho2024harmonic}. 

Existing knowledge of the problem structure can be used in order to reduce the computational expenditure of SOS-based verification \cite{magron2023sparse,zheng2021chordal}. This decrease in complexity can be accomplished by a combination of reducing the size of monomial basis used in formulating the polynomial representation and identifying opportunities for decomposing PSD matrix constraints in the SOS representation. Three dominant (and interlinked) approaches include correlative sparsity, term sparsity, and symmetry. Correlative sparsity structure ties together variables that appear in the same constraint or the same monomial term in the candidate polynomial \cite{waki2006sums}. Term sparsity structure pays attention to the monomials that appear in polynomials (commutative \cite{wang2021tssos,wang3}, noncommutative \cite{wang2021exploiting}). 
The Newton polytope method in the unconstrained setting uses the exponents of monomials found in the candidate polynomial to determine which monomials can categorically be ruled out of an SOS representation \cite{Reznick_1978}. In the context of symmetry, enforcing that an SOS polynomial is additionally invariant/equivariant with respect to a given group action adds severe structural constraints to its SOS representation. The resultant SOS decomposition involves a block-diagonal PSD structure where the reduced monomial basis is comprised of primary and secondary invariants \cite{gatermann2004symmetry}. The term sparsity scheme in \cite{wang2021tssos} can be interpreted as a refinement of symmetry reduction with respect to the class of sign symmetries \cite{lofberg2009pre}. Algebraic structure can be exploited if POPs are posed over regions described by polynomial equality constraints (in addition to polynomial inequality constraints), either by using Gr\"{o}bner basis reduction over a quotient ring \cite{parrilo2005exploiting} or through the ideal-sparsity method by unfolding the separable equality constraints (e.g. $x(x-1)=0$). Multiple kinds of structure can be combined, such as the CS-TSSOS framework for correlative and term sparsity structures \cite{wang2022cs}. Term sparsity methods have also recently been used in sum-of-rational function optimization \cite{guo2024exploiting}. 
Term sparsity methods can also be used in the derivation of non-SOS certificates of polynomial nonnegativity. The \ac{SONC} framework decomposes a polynomial as the sum of ‘circuit’ polynomials that are verifiably nonnegative due to their coefficients' satisfaction of the AM/GM inequality \cite{iliman2016amoebas}. The support and choice of these circuit polynomials are based on vertices of the Newton polytope (in the unconstrained setting). SONC verification therefore scales well when the candidate polynomial is sparse. SONC nonnegativity can be verified using relative entropy programs \cite{chandrasekaran2016relative} or second-order cone programs \cite{wang2020second,magron2023sonc}. 

A basic \ac{PMO} problem is of form
\begin{equation}\label{pmo}
	\inf_{\x\in\R^n}\lambda_{\min}(F(\x))\quad \text{s.t. }\  G_1(\x)\succeq 0,\ldots,G_m(\x)\succeq 0, 
\end{equation}
where $F,G_1,\ldots,G_m$ are symmetric polynomial matrices, and $\lambda_{\min}(F(\x))$ denotes the smallest eigenvalue of $F(\x)$. When $F$ is a scalar polynomial (i.e., a $1\times1$ polynomial matrix), Problem \eqref{pmo} is also known as a polynomial SDP which was extensively studied in the literature \cite{kojima2003sums,henrion2006convergent,tran2024convergence,huang2024tightness}. When both $F$ and $G_1,\ldots,G_m$ are polynomial matrices, a matrix Moment-SOS hierarchy for solving \eqref{pmo} was provided in \cite{GW2023}.
SOS-simplifying structure can be extended to the matrix case. 
When $F$ is a polynomial matrix and $G_1,\ldots,G_m$ are polynomials, the work in \cite{zheng2023sum} utilizes the matrix (chordal) sparsity of $F$ to construct a sparse \ac{SOS} representation for $F$. Note that matrix sparsity is independent of the presence of specific monomials in the nonzero entries. When $F$ is a scalar polynomial, correlative sparsity was studied in \cite{KM2009,nie2024} and constraint matrix sparsity was considered in \cite{kim2011exploiting}. More recently, the work in \cite{handaterm} extends the term sparsity method to the case of scalar $F$ in the context of frame topology optimization. We summarize different types of PMI sparsity in Figure \ref{fig:PMI-sparsity}.
General sparsity methods for Problem \eqref{pmo} have not previously been considered in the literature. 
This work fills this gap and studies sparsity reduction methods for \ac{PMO} in full generality where $F$ and $G_1,\ldots,G_m$ are all polynomial matrices. 

\begin{figure}[t!]
\centering
\tikzset{
    my node/.style={
        font=\small,
        rectangle,
        draw=#1!75,
        align=justify,
    }
}
\forestset{
    my tree style/.style={
        for tree={grow=south,    
            parent anchor=south, %
            child anchor=north,  
        where level=0{my node=black}{},
        where level=1{my node=black}{},
        where level=2{my node=black}{},
            l sep=1.5em,
            forked edge,                %
            fork sep=1em,               %
            edge={draw=black!50, thick},                
            if n children=3{for children={
                    if n=2{calign with current}{}}
            }{},
            tier/.option=level,
        }
    }
}
    \begin{forest}
      my tree style
[PMI sparsity
    [basis reduction]
    [term sparsity/PMI sign symmetry]
        [matrix sparsity
        [objective matrix sparsity]
        [constraint matrix sparsity]
        ]
    [correlative sparsity]
]
\end{forest}
\centering
\caption{Different types of PMI sparsity.}
\label{fig:PMI-sparsity}
\end{figure}

This work makes the following contributions:
\begin{enumerate} 
\item In the unconstrained setting, we propose a method based on Newton polytopes for reducing monomial bases. 
\item We provide an iterative procedure to exploit term sparsity, which yields a bilevel hierarchy of sparse Moment-SOS relaxations for Problem \eqref{pmo}. \rev{In addition, we generalize the notion of (usual) sign symmetries for polynomials to the notion of PMI sign symmetries for polynomial matrices. It turns out that for \ac{PMO}, the block structures of the term sparsity iterations with maximal chordal extensions converge to the one determined by PMI sign symmetries of Problem \eqref{pmo}.} 
\item We show how to exploit correlative sparsity for \eqref{pmo}. When $F$ is a scalar polynomial, it is known that asymptotic convergence of the correlatively sparse relaxations holds under the Archimedean condition and the running intersection property. However, when $F$ is a polynomial matrix,
we give a counterexample showing that asymptotic convergence does not hold under similar conditions. Moreover, by employing the theory of matrix-valued measures, we establish several results on detecting global optimality and retrieving optimal solutions in the correlatively sparse setting.
\item Decomposition methods based on the matrix sparsity structure of the objective matrix (extending the work of \cite{zheng2023sum}) or \ac{PMI} constraints are also provided.
\item Extensive numerical experiments are performed, which demonstrate the efficacy of our methods. 
\end{enumerate}

The rest of this paper is organized as follows. 
Section \ref{sec:preliminaries} introduces preliminaries such as notation, matrix \ac{SOS}, matrix-valued measures, and graph theory. Section \ref{sec:uncons} applies the Newton polytope and term sparsity methods towards unconstrained \acp{PMI}. Section \ref{sec:cons} concerns the term sparsity method for constrained \ac{PMO}. Section \ref{sec:correlative} investigates correlative sparsity for \ac{PMO}. 
Section \ref{cms} considers matrix sparsity methods for \ac{PMO}.
Section \ref{sec:examples} demonstrates the effectiveness of these methods via various numerical examples. Section \ref{sec:conclusion} concludes the paper.

\section{Preliminaries}
\label{sec:preliminaries}

\begin{acronym}
\acro{BSA}{basic semialgebraic}

\acro{CSP}{correlative sparsity pattern}



\acro{LMI}{linear matrix inequality}
\acroplural{LMI}[LMIs]{linear matrix inequalities}
\acroindefinite{LMI}{an}{a}


\acro{LP}{linear program}
\acroindefinite{LP}{an}{a}


\acro{PMI}{polynomial matrix inequality}

\acro{POP}{polynomial optimization problem}

\acro{PMO}{polynomial matrix optimization}

\acro{PSD}{positive semidefinite}

\acro{SDP}{semidefinite program}
\acroindefinite{SDP}{an}{a}

\acro{SONC}{sum of nonnegative circuit}

\acro{SOS}{sum of squares}
\acroindefinite{SOS}{an}{a}

\acro{TSP}{term sparsity pattern}

\acro{WSOS}{weighted sum of squares}

\end{acronym}

\subsection{Notation}
The $n$-dimensional real Euclidean vector space is $\R^n$. The set of natural numbers is $\N$, and the $n$-dimensional set of multi-indices is $\N^n$. The symbol $[n]$ denotes the set $\{1,\ldots,n\}$. For a set $\A$, its cardinality is denoted by $|\A|$. The degree of a multi-index $\a \in \N^n$ is $\abs{\a} \coloneqq \max_{i \in [n]} \alpha_i$.
The associated monomial to an $n$-dimensional indeterminate $\x\coloneqq(x_1,\ldots,x_n)$ and a multi-index $\a \in \N^n$ is $\x^\a = \prod_{i=1}^n x_i^{\alpha_i}$. Letting $\A \subset \N^n$ be a finite-cardinality set of multi-indices and $\{c_\a\}_{\a \in \A}$ be an associated set of real numbers, the polynomial formed by $\A$ and $\{c_\a\}_{\a \in \A}$ is $f(\x) = \sum_{\a \in \A} c_\a \x^\a$. The set of polynomials with real-valued coefficients is $\R[\x]$. The \emph{support} of a polynomial $f \in \R[\x]$, denoted by $\supp{f}$, is the set of multi-indices $\a$ such that $c_\a \neq 0$.
The degree of a polynomial $f \in \R[\x]$ is $\deg{f} = \max_{\a \in \supp{f}} \abs{\a}$.
The set of polynomials of degree at most $d$ is denoted by $\R[\x]_{d}$. Let $\circ$ denote the Hadamard (entrywise) product.

The transpose of a matrix $A$ is denoted by $A^{\intercal}$. The $p$-dimensional identity matrix is $I_p$. The Kronecker product between two matrices $C$ and $D$ is $C \otimes D$. 
The set of $p\times p$ symmetric \rev{and real-valued} matrices is $\psd^p$. The subset of $p\times p$ \ac{PSD} matrices is $\psd^p_+$. Membership in the \ac{PSD} set $Z \in \psd^p_+$ will also be denoted as $\rev{Z} \succeq 0$, and the Loewner partial ordering will be used as $\rev{Z}_1 \succeq \rev{Z}_2 \Leftrightarrow \rev{Z}_1 - \rev{Z}_2 \succeq 0$. 


The set of polynomial matrices of dimension $p \times q$ is $\R[\x]^{p \times q}$. Given a polynomial matrix $\rev{Z}(\x)=[\rev{Z}(\x)_{ij}]\in\R[\x]^{p \times q}$, we can write it as $\rev{Z}(\x)=\sum_{\a}\rev{Z}_{\a}\x^\a$, $\rev{Z}_{\a}\in\R^{p \times q}$. The \emph{support} of $\rev{Z}(\x)$, denoted by $\supp{\rev{Z}}$, is the set of multi-indices $\a$ such that $\rev{Z}_\a \neq 0$.
The degree of $\rev{Z}(\x)$ is $\deg{\rev{Z}} = \max_{i \in [p], \ j \in [q]} \deg{\rev{Z}(\x)_{ij}}$.
The set of symmetric polynomial matrices of dimension $p$ is $\psd^p[\x]$. The set of $p$-dimensional symmetric polynomial matrices of degree at most $d$ is denoted by $\psd^p[\x]_{d}$.

\subsection{Sum of Squares Polynomials}

This subsection will review \ac{SOS} methods for verifying nonnegativity of polynomials over $\R^n$ and over constrained \rev{semialgebraic} sets.

\subsubsection{Unconstrained}
A polynomial $f(\x) \in \R[\x]_{d}$ is nonnegative over $\R^n$ if $f(\x) \geq 0$ for any $\x \in \R^n$. One method to verify polynomial nonnegativity is to use \ac{SOS} certificates.
A polynomial $f(\x) \in \R[\x]$ is \ac{SOS} if there exist polynomials $g_1,\ldots,g_s\in \R[\x]$ such that $f(\x) = \sum_{i=1}^s g_i(\x)^2$. 
The set of \ac{SOS} polynomials, denoted by $\Sigma[\x]$, is contained inside the set of nonnegative polynomials over $\R^n$. 


A polynomial matrix $\rev{Z}(\x) \in \psd^p[\x]$ is positive semidefinite (PSD) over $\R^n$ if 
\begin{equation}
    \forall \x \in \R^n: \qquad \rev{Z}(\x) \succeq 0. \label{eq:pmi_uncons}
\end{equation} 
The statement in \eqref{eq:pmi_uncons} is \iac{PMI} in $\x$ over the unconstrained region $\R^n$.
The matrix $\rev{Z}(\x)$ is called an \ac{SOS} matrix if there exists another polynomial matrix $R(\x) \in \R[\x]^{s \times p}$ such that $\rev{Z}(\x) = R(\x)^{\intercal} R(\x)$.
The set of $n$-dimensional SOS matrices is denoted by $\Sigma^n[\x]$.
Note that the set of \ac{SOS} (scalar) polynomials $\Sigma[\x]$ is equivalent to the set $\Sigma^1[\x]$ (the $n=1$ case of \ac{SOS} matrices). 

Checking if a polynomial matrix $\rev{Z}(\x) \in \psd^p[\x]_{2d}$ is inside the set $\Sigma^p[\x]$ can be accomplished by solving an SDP. Letting $\bm_d(\x)$ be the $\binom{n+d}{d}$-vector of monomials in $\x$ up to degree $d$, the matrix $\rev{Z}(\x)$ is an \ac{SOS} matrix if there exists a \ac{PSD} matrix $Q \in \psd_+^{p \binom{n+d}{d}}$ such that
(Lemma 1 of \cite{scherer2006matrix})
\begin{equation}
    \rev{Z}(\x) = \left(\bm_d(\x) \otimes I_p\right)^{\intercal} Q (\bm_d(\x) \otimes I_p). \label{eq:sos_pmi_uncons}
\end{equation}
Problem \eqref{eq:sos_pmi_uncons} is an \ac{SDP} with respect to the \ac{PSD} matrix constraint $Q \in \psd_+^{p \binom{n+d}{d}}$ and the affine equality (coefficient matching) constraints in \eqref{eq:sos_pmi_uncons}.
Note that \eqref{eq:sos_pmi_uncons} can also be written as 
\begin{equation}
    \rev{Z}(\x) = \left(I_p\otimes \bm_d(\x) \right)^{\intercal} \tilde{Q} (I_p\otimes \bm_d(\x)), \label{eq:sos_pmi_uncons2}
\end{equation}
where $\tilde{Q}$ is obtained from $Q$ by certain row and column permutations.

Verification of the \ac{PMI} in \eqref{eq:pmi_uncons} can also be accomplished through \textit{scalarization}. The scalarization approach introduces a new tuple of variables $\y \in \R^p$ to form the equivalent problem:
\begin{equation}
    \forall (\x, \y) \in \R^{n+p}: \qquad \y^{\intercal} \rev{Z}(\x) \y \geq 0. \label{eq:pmi_uncons_scalarize}
\end{equation} 
The \ac{SOS}-restriction of the scalarized problem \eqref{eq:pmi_uncons_scalarize} is 
\begin{equation}
    \y^{\intercal} \rev{Z}(\x) \y \in \Sigma[\x, \y]. \label{eq:pmi_uncons_scalarize_sos}
\end{equation} 

\subsubsection{Constrained}
Let $\bG:=\{G_k(\x)\}_{k=1}^{m}$ be a set of symmetric polynomial matrices with $G_k(\x) \in \psd^{q_k}[\x]$ which defines a basic semialgebraic set as
\begin{align}
    \K \coloneqq \{\x \in \R^n \mid G_k(\x) \succeq 0, \quad \forall k \in [m]\}. \label{eq:bsa_set}
\end{align}
A polynomial matrix $\rev{Z}(\x) \in \psd^p[\x]$ satisfies \iac{PMI} over the constrained region $\K$ if
\begin{align}
    \forall \x \in \K: \qquad \rev{Z}(\x) \succeq 0. \label{eq:pmi_cons}
\end{align}
The following exposition of \ac{SOS} verification of \acp{PMI} originates from \cite{scherer2006matrix}.

Let $C$ be a a $pq\times pq$ block matrix  described as
\begin{align}
    C = \begin{bmatrix}
        C_{11} & C_{12} & \hdots & C_{1p} \\
        C_{21} & C_{22} & \hdots & C_{2p} \\
        \vdots & \vdots & \ddots & \vdots \\
        C_{p1} & C_{p2} & \hdots & C_{pp}
    \end{bmatrix} \label{eq:c_block}
\end{align}
in which each block $C_{ij}$ is of size $q\times q$.
The $p$-product between $C$ and a $q\times q$ matrix $D$ is defined by
\begin{align}
    \inp{C}{D}_p\coloneqq\begin{bmatrix}
        \text{tr}(C_{11}^{\intercal}D) & \text{tr}(C_{12}^{\intercal}D) & \hdots & \text{tr}(C_{1p}^{\intercal}D) \\
        \text{tr}(C_{21}^{\intercal}D) & \text{tr}(C_{22}^{\intercal}D) & \hdots & \text{tr}(C_{2p}^{\intercal}D) \\
        \vdots & \vdots & \ddots & \vdots \\
        \text{tr}(C_{p1}^{\intercal}D) & \text{tr}(C_{p2}^{\intercal}D) & \hdots & \text{tr}(C_{pp}^{\intercal}D)
    \end{bmatrix}.
\end{align}
A sufficient \ac{SOS} condition for \eqref{eq:pmi_cons} to hold is that there exist \ac{SOS} matrices $S_0\in \Sigma^p[\x], S_k\in\Sigma^{pq_k}[\x], k\in[m]$ such that 
\begin{equation}\label{eq:suff_sos}
    \rev{Z}(\x) = S_0(\x) + \sum_{k=1}^{m} \langle S_k(\x),G_k(\x)\rangle_p.
\end{equation}

The dimension-$p$ set of \ac{WSOS} polynomial matrices $\Sigma^p[\bG]$ is the
subset of $\psd^p[\x]$ that have a representation in \eqref{eq:suff_sos} (also
referred to the quadratic module formed by $\bG$). The degree-$\leq 2d$ set of
\ac{WSOS} polynomials $\Sigma^p[\bG]_{2d}$ is the subset of $\Sigma^p[\bG]$ where
$\deg S_0 \leq 2d$ and $\deg \langle S_k(\x),G_k(\x)\rangle_p\leq 2d$ for all $k\in[m]$.

The quadratic module $\Sigma^p[\bG]$ formed by $\bG$ satisfies the 
\textit{Archimedean} property if
there exists an $a \geq 0$ such that the scalar polynomial $a - \norm{\x}_2^2$ is a
member of $\Sigma^1[\bG]$. If $\Sigma^p[\bG]$ is Archimedean, then $\K$ is compact, but not necessarily vice versa
\cite{cimpric2011quadratic}. If there exists an $\epsilon > 0$ such
that $\rev{Z}(\x) \succeq \epsilon I_p$ over $\K$ and $\Sigma^p[\bG]$ is Archimedean,
then Scherer and Hol's Positivestellensatz \cite[Corollary 1]{scherer2006matrix}
states that $\rev{Z}(\x)$ will have a representation in
\eqref{eq:suff_sos}. However, the degrees of the $S$ terms needed to form the
\eqref{eq:suff_sos} certification may be exponential in $n$ and $\deg \rev{Z}$ (even
for the $p=1$ case) \cite{nie2007complexity}.

\subsection{Matrix-Valued Measures and Moment Matrices}
We recall some basics about the theory of matrix-valued measures and the
associated moment matrices, which will be used to establish conditions for detecting global optimality and extracting optimal solutions from the Moment-\ac{SOS} relaxations of PMO problems. 

For the set $\K$ in \eqref{eq:bsa_set}, let
$B(\K)$ be the smallest $\sigma$-algebra generated from the open subsets of $\K$
and $\mathfrak{m}(\K)$ the set of all finite Borel measures on $\K$. A measure
$\phi\in\mathfrak{m}(\K)$ is \emph{positive} if $\phi(\cA)\ge 0$ for all $\cA\in
B(\K)$. Denote by $\mathfrak{m}_+(\K)$ the set of all finite positive Borel
measures on $\K$. The support $\supp{\phi}$ of a Borel measure
$\phi\in\mathfrak{m}(\K)$ is the (unique) smallest closed set $\cA\in B(\K)$ such
that $\phi(\K\setminus \cA)=0$.

Let $\phi_{ij}\in\mathfrak{m}(\K)$, $i,j= 1,\ldots, p$. The $p\times p$
matrix-valued measure $\Phi$ on $\K$ is defined as the matrix-valued function
$\Phi\colon B(\K) \to \R^{p\times p}$ with
\[\Phi(\cA)\coloneqq[\phi_{ij}(\cA)]\in \R^{p\times p}, \quad \forall \cA\in
B(\K). \] If $\phi_{ij}=\phi_{ji}$ for all $i, j =1, \ldots, p$, we call $\Phi$
a symmetric matrix-valued measure. If $\bv^{\intercal} \Phi(\cA) \bv\ge 0$ holds
for all $\cA\in B(\K)$ and for all column vectors $\bv\in\R^p$, we call $\Phi$ a
PSD matrix-valued measure. The set $\supp{\Phi}\coloneqq\bigcup_{i,j=1}^p
\supp{\phi_{ij}}$ is called the support of $\Phi$. 
We denote by $\mathfrak{M}^p(\K)$ (resp. $\mathfrak{M}^p_+(\K)$) the set of all $p\times p$ (resp. PSD) symmetric matrix-valued measures on $\K$. 
A \emph{finitely atomic PSD} matrix-valued measure $\Phi\in\mathfrak{M}^p_+(\K)$ is a matrix-valued measure of form $\Phi=\sum_{i=1}^t W_i \delta_{\x^{(i)}}$ 
where $W_i\in\mS_+^p$, $\x^{(i)}$'s are distinct points in $\K$, and $\delta_{\x^{(i)}}$ denotes the Dirac measure centered at $\x^{(i)}$, $i=1,\ldots,t$.

For a given matrix-valued measure $\Phi=[\phi_{ij}]\in\mathfrak{M}^p_+(\K)$, we 
define a linear functional $\mL_{\Phi}  \colon \mS[\x]^{p} \to \R$ as
\begin{equation}\label{eq::Phi}
\mL_{\Phi}(F)=\int_{\K}\ \tr{F(\x)\ud \Phi(\x)}
 =\sum_{i,j}\int_{\K}\ F_{ij}(\x)\ud \phi_{ij}(\x),
\ \ \forall F(\x)\in\mS[\x]^{p}.
\end{equation}
Let $\bS=(S_{\a})_{\a\in\N^n}$ be a multi-indexed sequence of symmetric matrices in $\mS^{p}$.
We define a linear functional $\mL_{\bS}  \colon \mS[\x]^{p} \to \R$ in the following way:
\[
\mL_{\bS}(F)\coloneqq\sum_{\a\in\supp{F}}\tr{F_{\a}S_{\a}},\quad \forall 
F(\x)=\sum_{\a\in\supp{F}} F_{\a}\x^{\a}\in\mS[\x]^{p}.
\]
We call $\mL_{\bS}$ the \emph{Riesz functional} associated to the sequence $\bS$.
The sequence $\bS$ is called a matrix-valued $\K$-moment sequence if there exists a matrix-valued measure 
$\Phi=[\phi_{ij}]\in\mathfrak{M}^p_+(\K)$ such 
that 
\begin{equation}\label{eq::S}
\supp{\Phi}\subseteq \K\quad\text{and}\quad 
S_{\a}=\int_{\K} \x^{\a}\ud \Phi(\K)\coloneqq\left[\int_{\K}\x^{\a}\ud
\phi_{ij}(\x)\right]_{i,j\in[p]},\ \forall \a\in\N^n.
\end{equation}
The measure $\Phi\in\mathfrak{M}^p_+(\K)$ satisfying \eqref{eq::S} is called a \emph{representing measure} of $\bS$.

\begin{defn}\label{def::mm}
Given a sequence $\bS=(S_{\a})_{\a\in\N^n}\subseteq\mS^{p}$,
the associated \emph{moment matrix} $M(\bS)$ is the block matrix whose block row and block column are indexed by $\N^n$ and the $(\a, \b)$-th block entry is $S_{\a+\b}$ for all $\a, \b\in\N^n$. For $G\in\mS[\x]^{q}$,
the \emph{localizing matrix} $M(G\bS)$ associated to $\bS$ and $G$ is the block matrix whose block row and block column are indexed by $\N^n$ and 
the $(\a, \b)$-th block entry is 
$\sum_{\g\in\supp{G}}S_{\a+\b+\g}\otimes G_{\g}$ for all $\a, \b\in\N^n$. For $d\in\N$, the $d$-th order moment matrix $M_d(\bS)$ (resp. localizing matrix
$M_d(G\bS)$) is the submatrix of $M(\bS)$ (resp. $M(G\bS)$) whose block row and block column are both indexed by $\N^n_d$.
\end{defn}

Let $d_k\coloneqq\lceil\deg{G_k}/2\rceil$ for $k\in[m]$ and $d_{\K}\coloneqq\max\{d_1,\ldots,d_m\}$.
As in the scalar case, the existence of a finitely atomic representing measure of a matrix-valued sequence can be guaranteed by the ``flatness condition". 
\begin{thm}[\cite{GW2023}, Theorem 5]\label{th::FEC}
Given a truncated sequence $\bS=(S_{\a})_{\a\in\N^n_{2r}}\subseteq\mS^{p}$, 
$\bS$ admits an atomic representing measure $\Phi=\sum_{i=1}^t W_i\delta_{\x^{(i)}}$
with $W_i\in\mS_+^{p}$, $\x^{(i)}\in\K$ and $\sum_{i=1}^t\rank{W_i}=\rank{M_r(\bS)}$ if and only if
$M_{r}(\bS)\succeq 0$, $M_{r-d_k}(G\bS)\succeq 0$ for $k\in[m]$, and $\rank{M_r(\bS)}=\rank{M_{r-d_{\K}}(\bS)}$.
\end{thm}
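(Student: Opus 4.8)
The statement is an equivalence, and the plan is to treat the two implications by very different means: necessity is a direct substitution, while sufficiency rests on a block (matrix-weighted) version of the Curto--Fialkow flat-extension theorem. For necessity, suppose $\Phi=\sum_{i=1}^t W_i\delta_{\x^{(i)}}$ with $W_i\in\mS_+^p$, $\x^{(i)}\in\K$ and $\sum_i\rank{W_i}=\rank{M_r(\bS)}$. Writing $S_{\a}=\sum_i(\x^{(i)})^{\a}W_i$ and substituting into the block quadratic form, for any block vector $\bu=(\bu_{\a})_{\a\in\N^n_r}$ I would obtain $\bu^{\intercal}M_r(\bS)\bu=\sum_i\boldsymbol{p}_i^{\intercal}W_i\boldsymbol{p}_i\ge 0$ with $\boldsymbol{p}_i=\sum_{\a}(\x^{(i)})^{\a}\bu_{\a}$, giving $M_r(\bS)\succeq 0$; the same computation with the extra Kronecker factor $G_k(\x^{(i)})\succeq 0$ gives $M_{r-d_k}(G_k\bS)\succeq 0$. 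The rank identity then follows from the atomic structure, since the moment-matrix rank of a finitely atomic measure stabilizes at $\sum_i\rank{W_i}$ once the order is large enough to resolve all $t$ atoms, so the hypothesis $\sum_i\rank{W_i}=\rank{M_r(\bS)}$ places $r$ in the stabilized regime and yields $\rank{M_{r-d_{\K}}(\bS)}=\rank{M_r(\bS)}$.

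\textbf{Sufficiency: the extension engine.} This is the crux. The first observation is that the moment-matrix rank is monotone in the order, $\rank{M_{r-d_{\K}}(\bS)}\le\rank{M_{r-d_{\K}+1}(\bS)}\le\cdots\le\rank{M_r(\bS)}$, so the hypothesis $\rank{M_r(\bS)}=\rank{M_{r-d_{\K}}(\bS)}$ forces equality throughout, in particular $\rank{M_r(\bS)}=\rank{M_{r-1}(\bS)}$; that is, $M_r(\bS)$ is a \emph{flat} (rank-preserving) extension of $M_{r-1}(\bS)$. The plan is then to invoke the block analogue of the Curto--Fialkow propagation result: a flat moment matrix $M_r(\bS)$ admits a unique flat extension $M_{r+1}(\bS)$, and by induction an infinite flat extension $M(\bS)$ of the same finite rank. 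The kernel of this infinite $M(\bS)$ defines, through its block-column relations, a zero-dimensional structure whose solution set consists of finitely many points $\x^{(i)}$, and the flat factorization of $M(\bS)$ produces PSD matrix weights $W_i\in\mS_+^p$ with $\sum_i\rank{W_i}=\rank{M_r(\bS)}$, yielding a finitely atomic matrix-valued measure $\Phi=\sum_iW_i\delta_{\x^{(i)}}$ matching $\bS$.

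\textbf{Sufficiency: locating the support in $\K$.} It remains to show $\x^{(i)}\in\K$, i.e. $G_k(\x^{(i)})\succeq 0$ for all $k$. Here I would use the localizing matrices together with flatness. Evaluating on the recovered measure, the $(\a,\b)$ block of $M_{r-d_k}(G_k\bS)$ equals $\sum_i(\x^{(i)})^{\a+\b}\,W_i\otimes G_k(\x^{(i)})$, so $M_{r-d_k}(G_k\bS)\succeq 0$ forces $W_i\otimes G_k(\x^{(i)})\succeq 0$ at each atom; since $W_i\succeq 0$ is nonzero, the spectral structure of the Kronecker product then yields $G_k(\x^{(i)})\succeq 0$, placing every atom in $\K$. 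This is exactly the point where the matrix weight $W_i$ (rather than a single direction) is essential: the factor $W_i\otimes G_k(\x^{(i)})$ certifies \emph{full} positive semidefiniteness of $G_k$ at the atom, not merely nonnegativity along one vector.

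\textbf{Main obstacle.} I expect the genuine difficulty to be the block (matrix-weighted) flat-extension propagation used in the second step. In the scalar Curto--Fialkow theory, flatness of $M_r$ over $M_{r-1}$ propagates to all higher orders and the column relations generate a zero-dimensional ideal; transplanting this to block moment matrices with $\mS^p$-valued weights requires redoing the recursive-generation and consistency arguments so that the extension stays flat and the recovered weights remain symmetric and PSD with the exact rank count $\sum_i\rank{W_i}=\rank{M_r(\bS)}$. The companion subtlety, handled above, is that the localizing data now carries the Kronecker structure $\otimes G_{k,\g}$, so one must verify that PSD-ness of the block localizing matrix delivers the full matrix inequality $G_k(\x^{(i)})\succeq 0$; this, together with ensuring that no rank is created or lost when passing between orders $r-d_{\K}$ and $r$, is where the bulk of the technical work lies.
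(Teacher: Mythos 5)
The paper does not prove this statement at all: Theorem \ref{th::FEC} is imported verbatim from \cite{GW2023} (Theorem 5), a matrix-valued analogue of the Curto--Fialkow flat-truncation theorem, and the present paper only ever invokes its sufficiency direction (e.g., in the corollaries of Section \ref{sec:correlative}). So there is no in-paper argument to compare against, and your sketch must stand on its own merits; judged that way, it has one outright false step and one deferred core. The false step is in your necessity argument. Monotonicity of $d \mapsto \rank{M_d(\bS)}$ tells you that once the maximal value $\sum_i \rank{W_i}$ is attained at some order it persists at \emph{higher} orders; it gives no information at the lower order $r-d_{\K}$, so ``placing $r$ in the stabilized regime'' does not yield $\rank{M_{r-d_{\K}}(\bS)}=\rank{M_r(\bS)}$. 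Concretely, take $n=p=1$, $G_1 = 1-x^2$ (so $d_{\K}=1$), $r=1$, and let $\bS$ be the moments up to degree $2$ of $\tfrac{1}{2}\delta_{1/2}+\tfrac{1}{2}\delta_{-1/2}$: the measure is supported in $\K$ with $\sum_i\rank{W_i}=2=\rank{M_1(\bS)}$, yet $\rank{M_0(\bS)}=1$ and $M_0(G_1\bS)=3/4\ge 0$. This shows your stabilization inference cannot be repaired: necessity of the \emph{inward} flatness condition does not follow from the bare existence of such a measure, and the ``only if'' direction is exactly the delicate point of the statement, which in the Curto--Fialkow literature is handled via \emph{outward} flat extensions of the truncated sequence beyond order $2r$ rather than by the argument you give.

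In the sufficiency direction your route is the right one in spirit, but it contains two gaps. First, the engine you candidly label the ``main obstacle'' --- propagation of the block flatness $\rank{M_r(\bS)}=\rank{M_{r-1}(\bS)}$ to a unique flat extension $M_{r+1}(\bS)$, thence to an infinite flat extension, a finite-rank kernel structure with finitely many common zeros, and a matrix-weighted quadrature with $\sum_i\rank{W_i}=\rank{M_r(\bS)}$ --- is the entire content of the theorem (a Kimsey--Woerdeman-type matrix flat-extension result); deferring it means the sketch assumes what is to be proved, since the scalar recursive-generation arguments genuinely need to be redone with $\mS^p$-valued weights. Second, your support-localization step is an invalid inference as written: $M_{r-d_k}(G_k\bS)$ evaluates on the recovered measure to a \emph{sum} over atoms of terms of the form $(v_iv_i^{\intercal})\otimes W_i\otimes G_k(\x^{(i)})$ (with $v_i$ the truncated monomial vector at $\x^{(i)}$), and PSD-ness of a sum does not force PSD-ness of each summand. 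The missing ingredient is again flatness: because $\rank{M_{r-d_{\K}}(\bS)}$ already equals the total rank, one can construct interpolation vectors of degree at most $r-d_{\K}\le r-d_k$ that separate the atoms, and substituting these into the localizing form isolates the $i$-th term, yielding $W_i\otimes G_k(\x^{(i)})\succeq 0$; only after this repair is your (correct) Kronecker-spectral extraction of $G_k(\x^{(i)})\succeq 0$ from $W_i\neq 0$, $W_i\succeq 0$ legitimate.
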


We refer the reader to \cite{GW2023} for more details on matrix-valued measures/moments.

\subsection{Polynomial Matrix Optimization and the Moment-SOS Hierarchy}

The \ac{SOS} matrix can be used to solve
PMO problems. Consider the problem of \rev{finding an $\x \in \K$ to minimize} the smallest eigenvalue of $F(\x)$:
\begin{equation}\label{eq:eig_real}
    \lambda^{\star} \coloneqq \sup_{\lambda \in \R} \lambda \quad\mathrm{s.t.} \quad
     \forall \x \in \K: \ F(\x) \succeq \lambda I_p.
\end{equation}
The \ac{SOS}-restriction of Problem \eqref{eq:eig_real} is
\begin{equation}\label{eq:eig_sos}
    \sup_{\lambda \in \R} \lambda \quad\mathrm{s.t.} \quad F(\x) - \lambda I_p \in \Sigma^p[\bG],
\end{equation}
and its $r$-th order ($r\ge r_{\min}\coloneqq\max\{\lceil\deg{F}/2\rceil,d_1,\ldots,d_m\}$) \ac{SOS}-restriction is
\begin{equation}\label{eq:eig_sos_k}
\lambda_r^* \coloneqq \sup_{\lambda \in \R} \lambda \quad\mathrm{s.t.} \quad F(\x) - \lambda I_p \in \Sigma^p[\bG]_{2r}. 
\end{equation}
The dual program of \eqref{eq:eig_sos_k} (by convex duality) is an optimization problem posed over matrix-valued sequences $\bS=(S_\a)_{\a\in \N^n_{2r}}\subseteq\mS^{p}$ \cite{GW2023}:
\begin{equation}\label{eq::mPMI:mom}
\lambda_r\coloneqq\begin{cases}
\inf\limits_{\bS}&\mL_{\bS}(F)\\
\mathrm{s.t.}&M_{r}(\bS)\succeq0,\\
&M_{r-d_k}(G_k\bS)\succeq0,\quad k\in[m],\\
&\mL_{\bS}(I_p)=1.
\end{cases}
\end{equation} 
It is clear that $\lambda_r^*\le \lambda_r\le \lambda^{\star}$
for all $r\ge r_{\min}$.
When $\Sigma^p[\bG]$ is Archimedean, Scherer and Hol's Positivestellensatz
implies that $\lim_{r\to \infty} \lambda_r^* =\lim_{r\to \infty}\lambda_r=
\lambda^{\star}$. 
The relaxations \eqref{eq:eig_sos_k} and \eqref{eq::mPMI:mom} indexed by relaxation order $r$ is called the matrix Moment-\ac{SOS} hierarchy for Problem \eqref{eq:eig_real}.

\subsection{Graph Theory}

The term-sparse, correlatively-sparse, and matrix-sparse decomposition methods for \acp{PMI} explored in this paper use graph theory to identify exploitable structure in the optimization problems.
A graph $\gs(\vs, \es)$ is defined by a collection of nodes $\vs$ and edges $\es$. This paper will consider \textit{undirected} graphs with self-loops. 
A complete graph is a graph in which every node is connected to every other node. A \textit{clique} of $\gs$ is a subgraph of $\gs$ that is isomorphic to a complete graph. A \textit{maximal clique} of $\gs$ is a clique that is not a subgraph of another clique of $\gs$.

A \textit{path} in $\gs$ exists between nodes $v_i$ and $v_j$ if there exists a sequence of nodes $v_i,v_1, v_2, \ldots, v_q,v_j$ such that $\{v_i, v_1\} \in \es, \{v_1, v_2\} \in \es, \ldots, \{v_q, v_j\} \in \es$. The nodes $v_i$ and $v_j$ are thus path-connected. The \textit{block closure} of a graph $\gs$ is the unique supergraph $\gs' \supseteq \gs$ where an edge is added between every pair of nodes that are path-connected. The graph $\gs'$ is therefore isomorphic to a direct sum of complete graphs.

A \textit{cycle} in a graph is a path that starts and ends with the same node.
A graph is \textit{chordal} if all its cycles of length at least four have a chord (i.e., an edge $\{v_i, v_j\}$ that joins two nonconsecutive nodes in the cycle). 
A \textit{chordal extension} of $\gs$ is a supergraph $\gs' \supseteq \gs$ with $\gs'$ being a chordal graph. Determining if a graph is chordal can be done in linear time, but finding a chordal extension with a minimal number of edges is NP-hard \cite{mulzer2008minimum}. Chordal extensions are typically not unique, and the block closure is a specific instance of a chordal extension. To distinguish from the block closure, we refer to an ordinary chordal extension by a \emph{chordal closure}\footnote{\rev{In this paper, we implement the ``MinimumDegree'' greedy algorithm in \cite{treewidth} to generate a chordal closure.}}. \rev{The block closure is the largest chordal extension of $\gs$ such that disconnected components of $\gs$ remain disconnected in $\gs'$.}

\subsection{Sparse Semidefinite Programming}
Chordal sparsity can be used to reduce the computational complexity needed to solve large-scale semidefinite programming problems.
Given an undirected graph $\mG(\vs,\mE)$ with nodes
$\vs=\{1,\ldots,p\}$, we denote the set of sparse symmetric matrices by $\psd^p(\mG,0)$, i.e., 
\begin{align*}
\psd^p(\mG,0)\coloneqq\{A\in\psd^p \mid A_{ij}=A_{ji}=0,\text{ if } i\neq j  \text{ and } \{i,j\}\not\in\mE\},
\end{align*}
and let $\Pi_{\mG}:\psd^p\rightarrow\psd^p(\mG,0)$ be the projection defined by 
\begin{equation}\label{sec2-eq7}
[\Pi_{\mG}(A)]_{ij}=\begin{cases}
A_{ij}, &\textrm{if }i=j\textrm{ or }\{i,j\}\in \mE,\\
0, &\textrm{otherwise}.
\end{cases}
\end{equation}

We define the cone of sparse \ac{PSD} matrices as $\psd_+^p(\mG,0)\coloneqq\psd_+^p\cap\psd^p(\mG,0)$
and the dual cone of completable \ac{PSD} matrices is given by
\begin{align*}
\psd_+^p(\mG,?)=\Pi_{\mG}(\mS_+^{p})=\{\Pi_{\mG}(A)\mid A\in\mS_+^{p}\}.
\end{align*}
For any maximal clique $\mC_i$ of $\mG(\vs,\mE)$, we define the matrix $E_{\mC_i}\in\R^{|\mC_i|\times p}$ by
\[[E_{\mC_i}]_{jk}=\left\{
		\begin{array}{ll}
		1, &\text{if }	\mC_i(j)=k,\\
		0, &\text{otherwise},
	\end{array}\right.
\]
where $\mC_i(j)$ denotes the $j$-th node in $\mC_i$, sorted in the natural ordering.
Given $A\in\psd^p$, the matrix $E_{\mC_i}$ can be used to extract a principal submatrix indexed by $\mC_i$, i.e.,
$E_{\mC_i}AE_{\mC_i}^{\intercal}\in\psd^{|\mC_i|}$. Given
$A\in\psd^{|\mC_i|}$, the operation $E_{\mC_i}^{\intercal}AE_{\mC_i}$
inflates $A$ into a sparse $p\times p$ matrix.

When the sparsity pattern graph $\mG$ is chordal, the cone $\psd_+^p(\mG,0)$ can be
decomposed as a sum of simple convex cones, as stated in the following theorem.
\begin{thm}\label{th::spositive2}{\upshape (\cite[Theorem 2.3]{AGLER1988})}
Let $\mG(\vs,\mE)$ be a chordal graph and let
$\{\mC_1,\ldots,\mC_t\}$ be the set of its maximal cliques. Then, $A\in\psd^p_+(\mG,0)$ if and only if there exist matrices $A_i\in\psd_+^{|\mC_i|}$, $i=1,\ldots,t$, such that $A=\sum_{i=1}^{t}E_{\mC_i}^{\intercal}A_iE_{\mC_i}$.
\end{thm}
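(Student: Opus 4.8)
The plan is to prove both directions of the equivalence, noting that the ``if'' direction is essentially immediate and that the real content lies in the ``only if'' direction, which I would establish by induction on the clique-tree structure afforded by chordality.

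For the easy direction, suppose $A = \sum_{i=1}^{t} E_{\mC_i}^{\intercal} A_i E_{\mC_i}$ with each $A_i \in \psd_+^{|\mC_i|}$. First I would check that $A \in \psd_+^p$: for any $v \in \R^p$, we have $v^{\intercal} A v = \sum_i (E_{\mC_i} v)^{\intercal} A_i (E_{\mC_i} v) \geq 0$, so $A$ is PSD. Next I would check the sparsity pattern $A \in \psd^p(\mG,0)$: each term $E_{\mC_i}^{\intercal} A_i E_{\mC_i}$ is supported only on index pairs $(j,k)$ with both $j,k \in \mC_i$; since $\mC_i$ is a clique, any off-diagonal pair $\{j,k\}$ with $j \neq k$ appearing in the support satisfies $\{j,k\} \in \mE$. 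Hence every off-diagonal entry of $A$ outside $\mE$ vanishes, giving $A \in \psd^p(\mG,0)$ and therefore $A \in \psd^p_+(\mG,0)$.

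For the hard direction, I would argue by induction on the number of maximal cliques $t$, exploiting that a chordal graph admits a clique tree satisfying the running intersection property, so that one can order the maximal cliques $\mC_1, \ldots, \mC_t$ with a \emph{perfect elimination} structure. The base case $t=1$ is trivial: the graph is complete, $\psd^p_+(\mG,0) = \psd^p_+$, and $A = E_{\mC_1}^{\intercal} A E_{\mC_1}$ works. For the inductive step, I would peel off a leaf clique $\mC_t$ of the clique tree, whose separator $\mathcal{S} = \mC_t \cap (\bigcup_{i<t}\mC_i)$ is contained in a single neighboring clique. The key algebraic tool is a one-step Schur-complement / PSD-completion argument applied to the principal submatrix $A[\mC_t]$: because $A \in \psd_+^p(\mG,0)$ one shows the ``new'' rows and columns indexed by $\mC_t \setminus \mathcal{S}$ can be absorbed into a rank-controlled term $E_{\mC_t}^{\intercal} A_t E_{\mC_t}$ with $A_t \succeq 0$, such that $A - E_{\mC_t}^{\intercal} A_t E_{\mC_t}$ remains PSD and is now supported on the smaller chordal graph obtained by deleting $\mC_t \setminus \mathcal{S}$. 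Applying the inductive hypothesis to this reduced matrix yields the remaining terms $A_1, \ldots, A_{t-1}$.

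The main obstacle is the inductive step, specifically ensuring simultaneously that the subtracted term $E_{\mC_t}^{\intercal} A_t E_{\mC_t}$ is PSD, that the remainder $A - E_{\mC_t}^{\intercal} A_t E_{\mC_t}$ stays PSD, and that the remainder is supported exactly on the reduced chordal graph so that the running intersection property keeps it chordal. The delicate point is the interaction on the separator $\mathcal{S}$: the block of $A$ indexed by $\mathcal{S}$ must be shared correctly between $A_t$ and the remainder so that no spurious fill-in is introduced outside $\mE$. I would handle this via a Schur complement of the $(\mC_t \setminus \mathcal{S})$-block, using the fact that a PSD matrix with a zero diagonal block forces the corresponding off-diagonal blocks to vanish, which is precisely what guarantees the remainder respects the sparsity pattern. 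Rather than reprove this classical result in full, I would invoke it as \cite[Theorem 2.3]{AGLER1988} and sketch only the clique-tree peeling structure.
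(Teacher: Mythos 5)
The paper offers no proof of this statement---it is quoted verbatim from Agler, Helton, McCullough, and Rodman \cite[Theorem 2.3]{AGLER1988}---so your plan to verify the easy direction directly and invoke/sketch the classical argument for the hard direction matches the paper's treatment exactly. Your sketch of that hard direction (peeling a leaf clique $\mC_t$ of a clique tree, choosing $A_t$ via a generalized Schur complement of the $(\mC_t\setminus\mathcal{S})$-block with the pseudo-inverse handling rank deficiency, and using the fact that a PSD matrix with a zero diagonal block has zero off-diagonal blocks to keep the remainder supported on the reduced chordal graph) is the standard proof and is sound.
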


By duality, Theorem \ref{th::spositive2} leads to the following characterization of the PSD completable cone $\psd_+^p(\mG,?)$.
\begin{thm}\cite[Theorem 7]{GRONE1984}\label{th::spositive}
Let $\mG(\vs,\mE)$ be a chordal graph and let $\{\mC_1,\ldots,\mC_t\}$ be the set of its maximal cliques. Then, $A\in\psd^p_+(\mG,?)$ if and only if
$E_{\mC_i}AE^{\intercal}_{\mC_i}\in\psd_+^{|\mC_i|}$ for all $i\in[t]$. 
\end{thm}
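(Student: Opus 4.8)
The plan is to recognize Theorem~\ref{th::spositive} as the conic dual of Theorem~\ref{th::spositive2}, working inside the subspace $\psd^p(\mG,0)$ of sparse symmetric matrices equipped with the trace inner product $\inp{A}{B}=\tr{AB}$. One inclusion is immediate and uses no chordality: if $A=\Pi_{\mG}(B)$ for some $B\in\mS_+^p$, then for each maximal clique $\mC_i$ every off-diagonal index pair $\{j,k\}$ with $j,k\in\mC_i$ is an edge of $\mG$, so $\Pi_{\mG}$ leaves the clique-indexed entries of $B$ untouched; hence $E_{\mC_i}AE_{\mC_i}^{\intercal}=E_{\mC_i}BE_{\mC_i}^{\intercal}\succeq 0$. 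This shows $\psd_+^p(\mG,?)$ is contained in the set $\Gamma\coloneqq\{A\in\psd^p(\mG,0)\mid E_{\mC_i}AE_{\mC_i}^{\intercal}\succeq 0,\ \forall i\in[t]\}$.

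For the reverse inclusion I would argue by duality. First I would show that the two cones $\psd_+^p(\mG,0)$ and $\psd_+^p(\mG,?)$ are mutually dual within $\psd^p(\mG,0)$. Indeed, if $X\in\psd^p(\mG,0)$ then for any $B\in\mS_+^p$ the sparsity of $X$ gives $\inp{X}{\Pi_{\mG}(B)}=\inp{X}{B}$, since the entries of $X$ at non-edges of $\mG$ vanish; thus $\inp{X}{\cdot}\ge 0$ on $\psd_+^p(\mG,?)=\Pi_{\mG}(\mS_+^p)$ if and only if $\inp{X}{B}\ge 0$ for all $B\succeq 0$, i.e.\ if and only if $X\succeq 0$. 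Hence the dual cone of $\psd_+^p(\mG,?)$ inside $\psd^p(\mG,0)$ is exactly $\psd_+^p(\mG,0)$.

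Next I would dualize Agler's characterization. By Theorem~\ref{th::spositive2}, every $B\in\psd_+^p(\mG,0)$ has the form $B=\sum_{i=1}^t E_{\mC_i}^{\intercal}B_iE_{\mC_i}$ with $B_i\in\psd_+^{|\mC_i|}$. For $A\in\psd^p(\mG,0)$ the adjoint identity $\inp{A}{E_{\mC_i}^{\intercal}B_iE_{\mC_i}}=\inp{E_{\mC_i}AE_{\mC_i}^{\intercal}}{B_i}$ yields $\inp{A}{B}=\sum_{i=1}^t \inp{E_{\mC_i}AE_{\mC_i}^{\intercal}}{B_i}$. Since the $B_i$ range independently over the respective PSD cones, $\inp{A}{B}\ge 0$ for all $B\in\psd_+^p(\mG,0)$ if and only if $\inp{E_{\mC_i}AE_{\mC_i}^{\intercal}}{B_i}\ge 0$ for every $i$ and every $B_i\succeq 0$, which by self-duality of the PSD cone is equivalent to $E_{\mC_i}AE_{\mC_i}^{\intercal}\succeq 0$ for all $i$. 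Therefore the dual cone of $\psd_+^p(\mG,0)$ equals $\Gamma$. Combining the two dual computations, $\Gamma$ is the bidual of $\psd_+^p(\mG,?)$, hence equals its closure $\overline{\psd_+^p(\mG,?)}$.

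The remaining---and main---obstacle is to upgrade this to the exact equality $\Gamma=\psd_+^p(\mG,?)$, i.e.\ to show that the projected cone $\Pi_{\mG}(\mS_+^p)$ is already closed so that no closure is lost. I expect this to be the delicate point, and it is precisely where chordality is used a second time. One clean route is to prove closedness directly: any $A\in\Gamma$ can be completed to a PSD matrix by filling in the missing entries one clique-tree edge at a time along a perfect elimination ordering, at each step using a Schur-complement argument to choose the unknown block so that the enlarged principal submatrix stays PSD without destroying the already-fixed clique blocks. This explicit completion simultaneously exhibits $A\in\psd_+^p(\mG,?)$ and certifies closedness, closing the argument; alternatively, one may invoke that $\Pi_{\mG}$ maps the PSD cone onto a closed cone when $\mG$ is chordal and appeal to the bidual identity above.
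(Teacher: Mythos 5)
The paper does not prove this statement at all: it is quoted from Grone et al.\ with the one-line remark that it follows ``by duality'' from Theorem~\ref{th::spositive2}, and your proposal is exactly that duality argument carried out in detail. Your easy inclusion, the computation of the two dual cones inside $\psd^p(\mG,0)$, and the bipolar step identifying $\Gamma$ with $\overline{\psd_+^p(\mG,?)}$ are all correct, and chordality enters precisely once, through Agler's decomposition, as it should.

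The one place you stop short---closedness of $\Pi_{\mG}(\mS_+^p)$---is a genuine obligation for the bipolar argument, but it is easier than you fear, and your remark that it is ``precisely where chordality is used a second time'' is mistaken: closedness holds for \emph{every} graph $\mG$. Indeed, $\Pi_{\mG}$ retains all diagonal entries, so a matrix in $\ker \Pi_{\mG}\cap \mS_+^p$ is a PSD matrix with zero diagonal, hence zero. For a linear map $L$ and a closed convex cone $K$ in finite dimensions with $\ker L\cap K=\{0\}$, the image $L(K)$ is closed: if $L(x_n)\to y$ with $\|x_n\|\to\infty$, then a subsequential limit $z$ of $x_n/\|x_n\|$ lies in $K$, has $\|z\|=1$ and $L(z)=0$, a contradiction; so $(x_n)$ is bounded and a convergent subsequence gives $y\in L(K)$. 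This one paragraph closes your argument, and it is preferable to your first suggested route, since completing $A$ clique by clique along a clique tree via Schur complements is essentially the original Grone--Johnson--S\'a--Wolkowicz proof of the whole theorem---if you carry that out, the duality scaffolding becomes redundant.
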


\section{Term Sparsity for Unconstrained PMO}
\label{sec:uncons}

This section will detail term sparsity methods for unconstrained PMO, or equivalently for simplifying \ac{SOS} verification of a single \ac{PMI} $F(\x) \succeq 0$ in the fashion of the decomposition in \eqref{eq:sos_pmi_uncons2}.
The first subsection will discuss how to reduce the required monomial basis, thus setting some diagonal elements of the Gram matrix $Q$ to zero. The second subsection will define matrix term sparsity patterns to impose a block structure on the matrix $Q$.

\subsection{Selecting the Monomial Basis}
\label{sec:newton}
Letting $F(\x) \in \psd^p[\x]_{2d}$ be a polynomial matrix, define $\{\bv^{j}(\x)\}_{j=1}^p$ as a set of monomial (column) vectors such that
\begin{align}\label{eq:sos_pmi_uncons_monom}
    F(\x) = \diag{\bv^{1}(\x), \bv^{2}(\x), \ldots, \bv^{p}(\x)}^{\intercal}Q \diag{\bv^{1}(\x), \bv^{2}(\x), \ldots, \bv^{p}(\x)},
\end{align}
where $\diag{\bv^{1}(\x), \bv^{2}(\x), \ldots, \bv^{p}(\x)}$ is the matrix formed by putting $\bv^{j}(\x)$ at the $j$-th diagonal position and all other elements being zeros.
The formulation in \eqref{eq:sos_pmi_uncons2} is an instance where $\forall k \in [p]: \ \bv^{j}(\x) = \bm_d(\x)$ (the full monomial basis). The \ac{PMI} $\forall  \x\in \R^n: \ F(\x) \succeq 0$ is verified if the Gram matrix satisfies $Q \succeq 0$. This subsection will use the monomial structure of $F(\x)$ to design bases $\{\bv^{j}(\x)\}$ that may have a lower cardinality than the full basis $\bm_d(\x)$. The basis reduction will be accomplished by using Newton Polytope techniques on the scalarized \ac{PMI} \eqref{eq:pmi_uncons_scalarize_sos}.


\begin{defn}
The \textit{Newton polytope} of a scalar polynomial $f \in \R[\x]$ is $\mathrm{New}(f) = \mathrm{conv}(\supp{f})$, where $\mathrm{conv}(\cdot)$ means taking the convex hull.
\end{defn}


Newton polytopes can be used to reduce the number of active monomials in an \ac{SOS} decomposition.

\begin{thm}[Theorem 1 of \cite{Reznick_1978}]
\label{thm:newton_scalar}
Let $f$ be a polynomial such that $f \in \Sigma[\x]_{2d}$, and let $g \in \R[\x]^t$ be a vector of polynomials such that $f(\x) = \sum_{j=1}^t g_j(\x)^2$. The supports of each factor $g_j$ are constrained by
\begin{align}
    \forall j \in [t]: \qquad \mathrm{New}(g_j) \subseteq \frac{1}{2}\mathrm{New}(f). 
    \end{align}    
\end{thm}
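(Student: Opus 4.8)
The plan is to prove the equivalent inclusion $2\,\mathrm{New}(g_j) \subseteq \mathrm{New}(f)$ for each $j \in [t]$. Since $\mathrm{New}(g_j) \subseteq \mathrm{conv}\!\big(\bigcup_k \mathrm{New}(g_k)\big)$, it suffices to establish the single inclusion $P \subseteq \mathrm{New}(f)$, where $P \coloneqq \mathrm{conv}\big(\bigcup_{k=1}^t 2\,\mathrm{New}(g_k)\big)$. Because $\mathrm{New}(f)$ is convex, this in turn reduces to checking that every \emph{vertex} of $P$ lies in $\mathrm{New}(f)$. A vertex of $P$ has the form $2\a$, where $\a$ is a vertex of $\mathrm{conv}\!\big(\bigcup_k \mathrm{New}(g_k)\big) = \mathrm{conv}\!\big(\bigcup_k \supp{g_k}\big)$; since the extreme points of the convex hull of a finite union of supports must lie in one of those supports, such an $\a$ satisfies $\a \in \supp{g_{j_0}}$ for at least one index $j_0$.

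The key step, and the place where the sum-of-squares hypothesis is indispensable, is a no-cancellation argument at these extreme monomials. Fix a vertex $2\a$ of $P$ and let $\ell$ be a linear functional on $\R^n$ attaining its strict maximum over the finite set $\bigcup_k \supp{g_k}$ at the single point $\a$. Writing $g_j = \sum_{\b} g_{j,\b}\,\x^{\b}$, the coefficient of $\x^{2\a}$ in $g_j^2$ is $\sum_{\b+\g = 2\a} g_{j,\b}\,g_{j,\g}$, the sum ranging over $\b, \g \in \supp{g_j}$. For any admissible pair we have $\ell(\b) + \ell(\g) = 2\ell(\a)$ together with $\ell(\b), \ell(\g) \le \ell(\a)$, which forces $\ell(\b) = \ell(\g) = \ell(\a)$; by strictness of the maximizer this yields $\b = \g = \a$. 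Hence only the diagonal term survives, and the coefficient of $\x^{2\a}$ in $g_j^2$ equals $g_{j,\a}^2$ (interpreted as $0$ when $\a \notin \supp{g_j}$).

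Summing over $j$, the coefficient of $\x^{2\a}$ in $f = \sum_{j} g_j^2$ is exactly $\sum_{j} g_{j,\a}^2 \ge 0$. This is a sum of squares of real numbers, so no cancellation can occur; and since $\a \in \supp{g_{j_0}}$ by the vertex observation, the total is at least $g_{j_0,\a}^2 > 0$. Therefore $2\a \in \supp{f} \subseteq \mathrm{New}(f)$. As every vertex of $P$ lies in the convex set $\mathrm{New}(f)$, we conclude $P \subseteq \mathrm{New}(f)$, and in particular $2\,\mathrm{New}(g_j) \subseteq \mathrm{New}(f)$ for every $j$, which is precisely the claimed bound $\mathrm{New}(g_j) \subseteq \frac{1}{2}\mathrm{New}(f)$.

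I expect the main obstacle to be isolating and formalizing the no-cancellation mechanism cleanly. The purely support-theoretic observation $\supp{f} \subseteq \bigcup_j \supp{g_j^2} \subseteq \bigcup_j 2\,\mathrm{New}(g_j)$ only yields the \emph{opposite} (and trivial) inclusion $\mathrm{New}(f) \subseteq P$; to obtain the inclusion we actually need, one must exploit that the extreme coefficients of $f$ are genuine sums of squares and hence cannot vanish. The supporting-functional/strict-maximizer argument that pins each convolution back to the diagonal term is the technical heart of the proof, while the surrounding reductions are elementary convexity bookkeeping.
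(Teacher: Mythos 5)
Your proof is correct and is essentially the classical argument of Reznick's Theorem 1, which this paper cites without reproducing a proof: reduce to the vertices of the doubled support hull, expose each vertex $2\a$ by a linear functional $\ell$, and use the strict maximality of $\a$ over $\bigcup_k \supp{g_k}$ to force the coefficient of $\x^{2\a}$ in $\sum_j g_j(\x)^2$ to be the cancellation-free sum $\sum_j g_{j,\a}^2 > 0$. The only implicit step worth flagging is that every vertex of a polytope is an exposed point, which is what guarantees the existence of your strictly maximizing functional $\ell$; this is standard and leaves no gap.
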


A Gram \ac{SOS} representation $f(\x) = \bv(\x)^{\intercal} Q \bv(\x)$ can therefore restrict the monomial vector $\bv(\x)$ to all integer points of $\frac{1}{2}\mathrm{New}(f)$, rather than choosing all $\binom{n+d}{d}$ monomial elements in the full basis $\bm_d(\x)$. The size of the Gram matrix $Q$ can therefore be reduced (ensuring that solving the \ac{SDP} is more efficient), at the preprocessing cost of finding all integer points in the $1/2$-scaled Newton polytope.


We now return to the \ac{PMI} setting with a polynomial matrix $F(\x) \in \psd^p[\x]$. Monomial bases $\{\bv^{j}(\x)\}$ from \eqref{eq:sos_pmi_uncons_monom} can be determined by virtue of Theorem \ref{thm:newton_scalar} as follows.
\begin{thm}\label{thm:newton_matrix}
Let $F(\x) \in \Sigma^p[\x]$ be a polynomial matrix and the exponent set $\cA^j$ be chosen \rev{as} $ \cA^j\coloneqq \N^{n} \cap \frac{1}{2}\mathrm{New}(F(\x)_{jj})$, where $F(\x)_{jj}$ is the $j$-th diagonal element of $F(\x)$, $j\in[p]$.
Then the monomial basis $\bv^{j}(\x)$ from \eqref{eq:sos_pmi_uncons_monom} can be chosen without conservatism as $\bv^{j}(\x) = \{\x^\a \mid \a \in \cA^j\}$.    
\end{thm}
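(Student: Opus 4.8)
The plan is to reduce the matrix statement to the scalar Newton polytope result (Theorem \ref{thm:newton_scalar}) via scalarization, and then to isolate the contribution of each diagonal entry $F(\x)_{jj}$ through a face argument on the Newton polytope of the scalarized form. Since $F \in \Sigma^p[\x]$, I would first write $F(\x)=R(\x)^{\intercal}R(\x)$ for some $R\in\R[\x]^{s\times p}$, introduce auxiliary variables $\y=(y_1,\ldots,y_p)$, and scalarize:
\[
\y^{\intercal}F(\x)\y = (R(\x)\y)^{\intercal}(R(\x)\y) = \sum_{i=1}^{s} g_i(\x,\y)^2, \qquad g_i(\x,\y)\coloneqq\sum_{j=1}^{p} R_{ij}(\x)\,y_j .
\]
Thus $\y^{\intercal}F(\x)\y$ is a scalar \ac{SOS} polynomial in the $n+p$ variables $(\x,\y)$, with each square factor $g_i$ homogeneous of degree one in $\y$. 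Applying Theorem \ref{thm:newton_scalar} gives $\mathrm{New}(g_i)\subseteq \tfrac12\mathrm{New}(\y^{\intercal}F\y)$ for every $i$; writing a monomial of $g_i$ as $y_j\x^{\b}$ (the $\x^{\b}$-term of $R_{ij}$), its exponent vector $(\b,\mathbf{e}_j)\in\N^{n+p}$ lies in $\tfrac12\mathrm{New}(\y^{\intercal}F\y)$.

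The crux of the argument — and the step I expect to be the main obstacle — is to show that $(\b,\mathbf{e}_j)\in\tfrac12\mathrm{New}(\y^{\intercal}F\y)$ forces $\b\in\tfrac12\mathrm{New}(F(\x)_{jj})$, i.e. $\b\in\cA^j$. To do this I would analyze the face of $\mathrm{New}(\y^{\intercal}F\y)$ exposed by the linear functional $\ell_j$ that returns the exponent of $y_j$. Every monomial of $\y^{\intercal}F\y$ has the form $y_k y_l\x^{\a}$ with exponent $(\a,\mathbf{e}_k+\mathbf{e}_l)$ and $\a\in\supp{F(\x)_{kl}}$; there $\ell_j$ equals $[k=j]+[l=j]\in\{0,1,2\}$, attaining its maximum $2$ exactly on the diagonal monomials $y_j^2\x^{\a}$ with $\a\in\supp{F(\x)_{jj}}$. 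Hence the exposed face is $\mathrm{New}(F(\x)_{jj})\times\{2\mathbf{e}_j\}$. Since $(2\b,2\mathbf{e}_j)=2(\b,\mathbf{e}_j)\in\mathrm{New}(\y^{\intercal}F\y)$ also attains $\ell_j=2$, it lies on this face, so $2\b\in\mathrm{New}(F(\x)_{jj})$, giving $\b\in\tfrac12\mathrm{New}(F(\x)_{jj})$. Here I invoke the standard fact that a point of a polytope $\mathrm{conv}(S)$ at which a linear functional attains its maximum lies in the convex hull of the maximizing points of $S$.

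Consequently every column $R_{\cdot j}$ of $R$ is supported on the monomials $\{\x^{\a}\mid\a\in\cA^j\}$, so $R(\x)=G\,\diag{\bv^{1}(\x),\ldots,\bv^{p}(\x)}$ for a constant matrix $G$, with $\bv^{j}(\x)=\{\x^{\a}\mid\a\in\cA^j\}$. Setting $\tilde{Q}=G^{\intercal}G\succeq 0$ then reproduces \eqref{eq:sos_pmi_uncons_monom} using precisely the reduced bases, which proves that the restriction is without conservatism. The degenerate case $F(\x)_{jj}\equiv 0$ forces the $j$-th row and column of $R$ to vanish (since $F\succeq0$ implies $R_{\cdot j}\equiv0$) and is handled by taking $\bv^{j}$ empty. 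Finally, the converse inclusion is immediate: any $F$ admitting a representation \eqref{eq:sos_pmi_uncons_monom} with $\tilde{Q}\succeq 0$ is an \ac{SOS} matrix, so the reduced bases capture exactly $\Sigma^p[\x]$.
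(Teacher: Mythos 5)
Your proof is correct and follows essentially the same route as the paper: scalarize to $\y^{\intercal}F(\x)\y$, apply Theorem \ref{thm:newton_scalar}, and identify the exponents paired with $y_j$-degree one inside $\tfrac{1}{2}\mathrm{New}(\y^{\intercal}F\y)$ with $\tfrac{1}{2}\mathrm{New}(F(\x)_{jj})$. Your exposed-face argument (maximizing the $y_j$-coordinate functional) is precisely the detail behind the paper's remark that ``one can easily check that $\tilde{\cA}^j=\cA^j$,'' so you have merely made explicit, correctly, what the paper leaves implicit.
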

\begin{proof}
Consider the scalarization $\y^{\intercal} F(\x) \y$ of $F(\x)$ from \eqref{eq:pmi_uncons_scalarize}. Since $\y^{\intercal} F(\x) \y$ is a quadratic form in $\y$, it holds that each integral point $(\a_\x, \a_\y) \in \N^{n+p} \cap \frac{1}{2}\mathrm{New}(\y^{\intercal} F(\x) \y)$ satisfies $\abs{\a_\y} = 1$. 
For each $j\in[p]$, let $\tilde{\cA}^j \coloneqq \{\a_\x \in \N^n \mid (\a_\x, \a_\y) \in \N^{n+p} \cap \frac{1}{2}\mathrm{New}(\y^{\intercal} F(\x) \y), \a_{y_j} = 1 \}$ corresponding to the set of $\x$-monomials multiplied by $y_j$. 
By Theorem \ref{thm:newton_scalar}, one can choose $\bv^{j}(\x) = \{\x^\a \mid \a \in \tilde{\cA}^j\}$ without conservatism. Moreover, the sets satisfy $\tilde{\cA}^j=\cA^j$ by construction.
\end{proof}

The procedure in Theorem \ref{thm:newton_matrix} will be outlined with an example.

\begin{exmp}
    Let $F(\x) \in \psd^3[\x]$ be defined by
    \begin{equation}
        F(\x) = \begin{bmatrix}
            1+x_1^4 - 0.5 x_1^2 + 0.25x_1 + x_2^2 + x_3^4 & -x_2 & x_1+x_3 \\
            -x_2 & 2+2x_2^4 + 2 x_1^2 - x_1 & x_1 \\
            x_1+x_3& x_1 & 2 + 3 x_3^2 + x_1^2 x_2^2
        \end{bmatrix}. \label{eq:f3_exmp}
    \end{equation}
By respectively computing the Newton polytopes of the diagonal elements $F(\x)_{11},$ $F(\x)_{22},$  $F(\x)_{33}$, 
the monomial bases $\{\bv^{j}(\x)\}$ can be given as
\begin{subequations}
\label{eq:newton_group}
    \begin{align}
    \bv^{1}(\x) &= \{1, x_1^2, x_1, x_1 x_3, x_2, x_3^2, x_3\}, \\
    \bv^{2}(\x) &= \{1, x_1, x_2, x_2^2\}, \\
    \bv^{3}(\x) &= \{1, x_1 x_2, x_3\}.
    \end{align}
\end{subequations}
There are 14 monomials in total listed in \eqref{eq:newton_group}. Verification that $F(\x)$ from \eqref{eq:f3_exmp} is an \ac{SOS} matrix can therefore be accomplished by solving an \ac{SDP} with a Gram matrix $Q \in \psd_+^{14}$ rather than in $ \psd_+^{30}$ (as would be required with a full choice of all $\binom{3+2}{3}=10$ monomials for each of the three columns). An admissible \ac{SOS} decomposition $F(\x)= R(\x)^{\intercal} R(\x)$ found by the reduced bases in \eqref{eq:newton_group} is 
\begin{align}
    R(\x) \approx \begin{bmatrix}
        1 + 0.125 x_1 - 0.517x_1^2 & 0.0864 x_1& x_3\\
        0.0760x_1 + 0.8543 x_1^2 & -0.617 + 0.0525 x_1& 0.608 x_3\\
        0.707 x_1 & 0.707 x_1 & 1.414\\ 
        x_2 & -1 & 0\\
        x_3^2  & 0 & 0\\
        -0.050 x_1 & 0.787 - 0.594 x_1 & 0.477 x_3 \\
        -0.016 x_1 & 1.066 x_1 & 0.155 x_3\\ 
        0 & 1.414 x_2^2 & 0\\
        -0.1223 x_1 & 0 & 1.174 x_3\\
        0 & 0 & x_1 x_2
    \end{bmatrix}.
\end{align}
    
\end{exmp}

\subsection{Term Sparsity Patterns}\label{sec:block_diag}
The previous subsection concentrated on reducing the number of elements in the monomial bases $\{\bv^{j}(\x)\}$. This subsection will focus on reducing the maximal size of \ac{PSD} matrices involved in the \ac{SDP} through the application of term sparsity.






\begin{defn}
\rev{Let $\bB \coloneqq \{\bv^1(\x),\ldots,\bv^{p}(\x)\}$ is the concatenation of the vectors $\bv^i(\x)$.}
The \emph{term sparsity pattern (TSP)} of a polynomial matrix $F(\x) \in \psd^p[\x]$ is an undirected graph including self-loops with nodes corresponding to entries of $\bB$, and edges are drawn between nodes $a\in\bv^{i}(\x)$ and $b\in\bv^{j}(\x)$ if $ab \in \{\x^\a\mid\a\in\supp{F_{ij}}\}$ or if $ab$ is a monomial square given that $i=j$.
\end{defn}

\begin{exmp}\label{exmp:bivariate}
Consider the bivariate matrix 
\begin{equation}
F(\x) = \begin{bmatrix}
            1 + x_1^2 + 2 x_1^2 x_2^2 + x_2^2 & x_1 x_2 \\
            x_1 x_2 & 2 + x_1^2 x_2^2 + x_2^4
\end{bmatrix} \label{eq:bivariate_example}
\end{equation}
 admitting an \ac{SOS}-matrix decomposition $F(\x)=R(\x)^{\intercal}R(\x)$ with
    \begin{align}
        R(\x) &= \begin{bmatrix}
            x_1 & x_2 \\
            \sqrt{2} x_1 x_2 & 0 \\
            x_2 &  0\\
            1 & 0\\
            0 & \sqrt{3/2}\\
            0 & x_1 x_2 \\
            0 & x_2^2/\sqrt{2} \\
            0 & (x_2^2-1)/\sqrt{2}
        \end{bmatrix}.
    \end{align}

This example uses the standard full bases $\bv^{1}(\x) = \bv^{2}(\x)=[1, x_1, x_2, x_1^2, x_1 x_2, x_2^2]$.
The $j$-indices of a monomial $a\in\bv^{j}(\x)$ for $j \in \{1, 2\}$ will be indicated by a bracketed-superscript as $[a]^j$.
The TSP of \eqref{eq:bivariate_example} is drawn in Figure \ref{fig:TSP_bivariate_first_chordal}. This graph has 5 connected components. As an example, the off-diagonal term $F(\x)_{12} = x_1 x_2$ could be created by multiplications of $[1]^1,[x_1x_2]^2$ or $[x_1]^1, [x_2]^2$.

\end{exmp}


The term sparsity method proceeds by the iterative repetition of two operations on the TSP graph \cite{wang2021tssos}:
\begin{itemize}
    \item \textbf{Support Extension:} Add edges to the graph based on the sets of active monomials;
    \item \textbf{Chordal Extension:} Add edges to the graph by performing a chordal extension.
\end{itemize}

More concretely, for each pair $i,j\in[p]$, define the initial support set associated to the $(i,j)$-position by
\begin{equation}
    \cC_{i,j}^{(0)}\coloneqq \begin{cases}\supp{F_{ii}}\cup\bv^{i}(\x)^{\circ2}, \quad&\text{if }i=j,\\
    \supp{F_{ij}},\quad&\text{otherwise},
    \end{cases}
\end{equation}
where $\bv^{i}(\x)^{\circ2}$ denotes the set of squares of monomials in $\bv^{i}(\x)$.
For $s\ge0$, the support extension operation is defined as
\begin{equation}
\text{SupportExtension}(\es^{(s)})\coloneqq\bigcup_{i,j=1}^p\left\{\left\{[a]^i,[b]^j\right\}\in\bv^i(\x)\times\bv^j(\x)\,\middle|\, ab \in \cC_{i,j}^{(s)}\right\}.
\end{equation}
Next, we perform a chordal extension on the graph with $\text{SupportExtension}(\es^{(s)})$ as its edges:
\begin{equation}\label{sec3:eq1}
    \es^{(s+1)} \leftarrow \text{ChordalExtension}(\text{SupportExtension}(\es^{(s)})), 
\end{equation}
and update the support set 
\begin{equation}
    \cC_{i,j}^{(s+1)}\coloneqq \left\{ab\,\middle|\,\left\{[a]^{i},[b]^{j}\right\}\in \es^{(s+1)}\right\}.
\end{equation}

There are two typical choices for the chordal extension operation in \eqref{sec3:eq1}: an (approximately) smallest chordal closure (i.e., chordal extension with the smallest tree-width) and the block closure. The block closure would offer a block-diagonal structure on the Gram matrix, whereas the sparser chordal closure could potentially contain overlaps among maximal cliques. The chordal extension does not add any edges to the TSP graph in Figure \ref{fig:PMI-sparsity}. 
Figure \ref{fig:TSP_bivariate_first_closure} \rev{displays the block closure, and highlights new edges as red lines.} 
\begin{figure}[!htbp]
    \begin{subfigure}[b]{0.45\textwidth}
    \begin{center}
{
{\footnotesize\begin{tikzpicture}[every node/.style={circle, draw=blue!50, thick}]
\node (n1) at (2,2) {$[1]^1$};
\node (n4) at (2,4) {$[x_1^2]^1$};
\node (n6) at (4,2) {$[x_2^2]^1$};
\node (n11) at (0,2) {$[x_1 x_2]^2$};
\node (n9) at (6,0) {$[x_2]^2$};
\node (n2) at (6,-2) {$[x_1]^1$};
\node (n7) at (2,0) {$[1]^2$};
\node (n5) at (0,0) {$[x_1 x_2]^1$};
\node (n8) at (6,4) {$[x_1]^2$};
\node (n3) at (6,2) {$[x_2]^1$};
\node (n10) at (4,0) {$[x_1^2]^2$};
\node (n12) at (2,-2) {$[x_2^2]^2$};
\draw (n1)--(n4);
\draw (n1)--(n6);
\draw (n4)--(n6);
\draw (n1)--(n11);
\draw (n9)--(n2);
\draw (n7)--(n5);
\draw (n8)--(n3);
\draw (n10)--(n12);
\draw (n10)--(n7);
\draw (n12)--(n7);
\end{tikzpicture}}}\caption{chordal closure\label{fig:TSP_bivariate_first_chordal} }
\end{center}
    \end{subfigure}
    \hfill
    \begin{subfigure}[b]{0.45\textwidth}
\begin{center}
{{\footnotesize
\begin{tikzpicture}[every node/.style={circle, draw=blue!50, thick}]
\node (n1) at (2,2) {$[1]^1$};
\node (n4) at (2,4) {$[x_1^2]^1$};
\node (n6) at (4,2) {$[x_2^2]^1$};
\node (n11) at (4,4) {$[x_1 x_2]^2$};
\node (n9) at (6,0) {$[x_2]^2$};
\node (n2) at (6,-2) {$[x_1]^1$};
\node (n7) at (2,0) {$[1]^2$};
\node (n5) at (4,-2) {$[x_1 x_2]^1$};
\node (n8) at (6,4) {$[x_1]^2$};
\node (n3) at (6,2) {$[x_2]^1$};
\node (n10) at (4,0) {$[x_1^2]^2$};
\node (n12) at (2,-2) {$[x_2^2]^2$};
\draw [color=red](n1)--(n4);
\draw [color=red](n1)--(n6);
\draw (n4)--(n6);
\draw (n1)--(n11);
\draw (n6)--(n11);
\draw (n4)--(n11);
\draw (n9)--(n2);
\draw (n7)--(n5);
\draw (n8)--(n3);
\draw (n10)--(n12);
\draw (n10)--(n7);
\draw (n12)--(n7);
\draw [color=red](n12)--(n5);
\draw [color=red](n10)--(n5);
\end{tikzpicture}}}\caption{block closure\label{fig:TSP_bivariate_first_block} }
\end{center}
\end{subfigure}
\caption{TSP graph of $F(\x)$ in \eqref{eq:bivariate_example} and its chordal/block closure}
\label{fig:TSP_bivariate_first_closure}
\end{figure}

The process in \eqref{sec3:eq1} is guaranteed to stabilize after a finite number of iterations, because the cardinality of the node set $\bB$ is finite and both the support extension and chordal extension steps only add edges. After obtaining the graph $\G^{(s)}(\bB,\es^{(s)})$ at the $s$-th term sparsity step ($s\ge1$), we may consider the following sparse SDP to decompose $F(\x)$ as an SOS:
\begin{equation}\label{eq::sparseSOS}
\begin{cases}
   \text{Find}&Q\in\mathbb{S}^{|\bB|}_+(\G^{(s)},0)\\
    \text{s.t.}&F(\x) = \diag{\bv^{1}(\x), \bv^{2}(\x), \ldots, \bv^{p}(\x)}^{\intercal}Q \diag{\bv^{1}(\x), \bv^{2}(\x), \ldots, \bv^{p}(\x)}.
\end{cases}
\end{equation}
\begin{thm}
    Assume that the chordal extension in \eqref{sec3:eq1} is chosen to be the block closure
    and the process in \eqref{sec3:eq1} stabilizes at the $s$-th step. Then, 
    $F(\x) \in \Sigma^p[\x]$ if and only if \eqref{eq::sparseSOS} is feasible.
\end{thm}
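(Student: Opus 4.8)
The plan is to prove the two implications separately, with the reverse direction (that $F \in \Sigma^p[\x]$ forces feasibility of \eqref{eq::sparseSOS}) carrying essentially all of the content. Throughout I write $V \coloneqq \diag{\bv^1(\x),\ldots,\bv^p(\x)}$, so that the constraint in \eqref{eq::sparseSOS} reads $F = V^{\intercal} Q V$, and I record at the outset that, because the block closure is used, the stabilized graph $\G^{(s)}$ is a disjoint union of cliques whose vertex sets $B_1,\ldots,B_\ell$ partition $\bB$. For the easy direction, suppose \eqref{eq::sparseSOS} admits a feasible $Q \in \mathbb{S}^{|\bB|}_+(\G^{(s)},0)$. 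Since $Q \succeq 0$ I would factor $Q = U^{\intercal} U$ with $U \in \R^{r \times |\bB|}$ and set $R(\x) \coloneqq U V \in \R[\x]^{r \times p}$; then $F = V^{\intercal} Q V = R^{\intercal} R$ exhibits $F$ as an \ac{SOS} matrix, i.e. $F \in \Sigma^p[\x]$.

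For the reverse direction I would start from the fact that the chosen bases are without conservatism (full basis, or Newton-reduced as in Theorem \ref{thm:newton_matrix}), so that $F \in \Sigma^p[\x]$ supplies a \emph{dense} symmetric Gram matrix $Q_0 \succeq 0$ with $F = V^{\intercal} Q_0 V$. Let $\tilde Q$ be the block-diagonal part of $Q_0$ relative to the partition $B_1,\ldots,B_\ell$, namely $\tilde Q_{uv} = (Q_0)_{uv}$ when $u,v$ lie in a common block and $\tilde Q_{uv}=0$ otherwise. Each diagonal block of $\tilde Q$ is a principal submatrix of $Q_0$ and hence \ac{PSD}, so $\tilde Q \succeq 0$ and $\tilde Q \in \mathbb{S}^{|\bB|}_+(\G^{(s)},0)$ (equivalently by Theorem \ref{th::spositive2}). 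The entire proof then reduces to verifying $V^{\intercal}(Q_0 - \tilde Q)V = 0$, which shows $\tilde Q$ is feasible for \eqref{eq::sparseSOS}.

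To establish this vanishing I would first record two closure facts. Since both the support-extension and chordal-extension steps only add edges, the support sets grow, giving $\supp{F_{ij}} \subseteq \cC_{i,j}^{(0)} \subseteq \cC_{i,j}^{(s)}$ for all $i,j$. Next comes the key \emph{stabilization property}: if $\x^{\gamma} \in \cC_{i,j}^{(s)}$, then every factorization $\x^{\gamma} = ab$ with $a \in \bv^i(\x)$, $b \in \bv^j(\x)$ has $[a]^i$ and $[b]^j$ in a common block. Indeed, such a factorization is added as an edge by $\text{SupportExtension}(\es^{(s)})$, and since $\es^{(s)}$ is stable this edge already lies in $\es^{(s)}$, forcing $[a]^i \sim [b]^j$ inside one clique. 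With these in hand I would match coefficients entrywise: for fixed $i,j$ and a monomial $\x^{\gamma}$, the coefficient of $\x^{\gamma}$ in $[V^{\intercal}(Q_0 - \tilde Q)V]_{ij}$ equals the sum of the entries $(Q_0)_{([a]^i,[b]^j)}$ over the \emph{cross-block} pairs $(a,b)$ with $ab = \x^{\gamma}$. If $\x^{\gamma} \in \cC_{i,j}^{(s)}$, the stabilization property leaves no cross-block realizing pair, so the sum is empty. If $\x^{\gamma} \notin \cC_{i,j}^{(s)}$, then $\x^{\gamma} \notin \supp{F_{ij}}$, so the sum over \emph{all} realizing pairs equals the coefficient of $\x^{\gamma}$ in $F_{ij}$, which is $0$; since here every realizing pair is cross-block, the two sums coincide and vanish. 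In either case the coefficient is $0$, so $V^{\intercal}(Q_0 - \tilde Q)V = 0$ and $F = V^{\intercal}\tilde Q V$, as required.

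I expect the main obstacle to be precisely the coefficient-matching argument of the last paragraph together with the stabilization property it rests on: this is the matrix analogue of the scalar term-sparsity mechanism, and the delicate point is the bookkeeping of the column indices $(i,j)$ and of the squares $\bv^i(\x)^{\circ 2}$ folded into the diagonal sets $\cC_{i,i}^{(0)}$, which must be tracked so that cross-block Gram entries provably contribute nothing to any entry of $F$. By contrast, the \ac{PSD} and membership claims for $\tilde Q$ are routine given Theorem \ref{th::spositive2}, and the easy direction is immediate from a Cholesky-type factorization.
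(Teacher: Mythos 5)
Your proposal is correct and follows essentially the same route as the paper, whose proof is deferred to the scalar term-sparsity argument of Theorem 3.3 in \cite{wang2021tssos}: take a dense Gram matrix $Q_0$, replace it by its block-diagonal truncation along the stabilized components of the block closure, and verify via the stabilized support sets $\cC_{i,j}^{(s)}$ (including the squares $\bv^i(\x)^{\circ 2}$ on the diagonal) that cross-block entries contribute nothing to any coefficient of $F$. You have merely written out, in the matrix-indexed setting with the $[a]^i$ bookkeeping, the details the paper leaves to the cited reference, and your handling of both cases ($\x^{\gamma}\in\cC_{i,j}^{(s)}$ forcing all realizing pairs into one block by stabilization, and $\x^{\gamma}\notin\cC_{i,j}^{(s)}$ forcing the full sum to vanish since $\supp{F_{ij}}\subseteq\cC_{i,j}^{(s)}$) is sound.
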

\begin{proof}
    The proof is similar to the counterpart in \cite[Theorem 3.3]{wang2021tssos}.
\end{proof}


\section{Term Sparsity for Constrained PMO}
\label{sec:cons}

This section will apply term sparsity techniques towards PMO constrained by \acp{PMI}. Specifically, we extend the iterative procedure on exploiting term sparsity for scalar constrained polynomial optimization \cite{wang2021tssos,wang3} to the situation of PMO. Just as in the unconstrained case of Section \ref{sec:block_diag}, the term sparsity decomposition will proceed based on alternating support extension and chordal extension steps.

Consider the PMO problem
\begin{equation}\label{eq::mPMI}
	\lambda^{\star}\coloneqq\inf_{\x\in\R^n}\lambda_{\min}(F(\x))\quad \text{s.t. }\  G_1(\x)\succeq 0,\ldots,G_m(\x)\succeq 0, 
\end{equation}
where $F=[F_{ij}]\in\mS^p[\x]$ and $G_k=[G_k^{i,j}]\in\mS^{q_k}[\x], k\in[m]$.
By abuse of notation, we denote a monomial basis $\{\x^{\a}\}_{\a\in\cB}$ by its exponent set $\cB$ in the following. For a monomial basis $\cB$ and a positive integer $k$, we use $\cB^{(k)}$ to denote the vector of $k$-copies of $\cB$, that is,
\begin{equation}
\cB^{(k)}\coloneqq(\underbrace{\a,\ldots,\a}_{k\text{ copies}})_{\a\in\cB}.
\end{equation}
For convenience, the $k$-copies of $\a\in\cB$ in $\cB^{(k)}$ are respectively labeled $[\a]^1,\ldots,[\a]^k$.

Recall that $d_k\coloneqq\lceil\deg{G_k}/2\rceil$ for $k\in[m]$ and $r_{\min}\coloneqq\max\{\lceil\deg{F}/2\rceil,d_1,\ldots,d_m\}$. 
Fix a relaxation order $r\ge r_{\min}$. Set $d_0\coloneqq0$ and $q_0\coloneqq1$. Let
\begin{equation}
\bB_{r,k}\coloneqq (\N^n_{r-d_k})^{(pq_k)}, \quad k=0,\ldots,m,  
\end{equation}
and for each pair $i,j\in[p]$, let us define the initial support set
\begin{equation}
    \cC_{i,j}^{(0)}\coloneqq \begin{cases}\supp{F_{ii}}\cup(2\N^n_{r}), \quad&\text{if }i=j,\\
    \supp{F_{ij}},\quad&\text{otherwise}.
    \end{cases}
\end{equation}

Now for each $k\in[m]$, we iteratively define an $s$-indexed  ascending chain of graphs $\left(\G_{r,k}^{(s)}\left(\bB_{r,k},\es_{r,k}^{(s)}\right)\right)_{s\ge1}$ via two successive operations:\\
1) {\bf Support Extension.} Add edges to the graph $\G_{r,k}^{(s)}$ according to the sets of activated supports $\cC_{i,j}^{(s)}$:
\begin{equation*}
\mathrm{SupportExtension}(\es_{r,k}^{(s)})\coloneqq
\bigcup_{i,j=1}^{pq_k}\bigg\{\left\{[\a]^i,[\b]^j\right\}\in\N^n_{r-d_k}\times\N^n_{r-d_k}\bigg|\left(\a+ \b+\supp{G_k^{\bar{i},\bar{j}}}\right)\cap\cC_{\lceil i/q_k\rceil,\lceil j/q_k\rceil}^{(s)}\ne\emptyset\bigg\},
\end{equation*}
where for any $i\in[pq_k]$,
\begin{equation*}
    \bar{i}\coloneqq\begin{cases}i\,(\text{mod }q_k),\quad&\text{if }q_k\nmid i,\\
    q_k,\quad&\text{otherwise}.
    \end{cases}
\end{equation*}
2) {\bf Chordal Extension.} For each $k$, perform a chordal extension on the graph with $\mathrm{SupportExtension}(\es_{r,k}^{(s)})$ as its edge set, i.e., let
\begin{equation}\label{sec4:eq1}
\es_{r,k}^{(s+1)}\leftarrow \text{ChordalExtension}(\text{SupportExtension}(\es_{r,k}^{(s)})),\quad k=0,\ldots,m,
\end{equation}
and for each $i,j\in[p]$, update the sets of activated supports:
\begin{equation}
    \cC_{i,j}^{(s+1)}\coloneqq \bigcup_{k=0}^m\bigcup_{i',j'=1}^{q_k}\bigcup_{\{[\a]^{(i-1)q_k+i'},[\b]^{(j-1)q_k+j'}\}\in \es_{r,k}^{(s+1)}}\left(\a+ \b+\supp{G_k^{i',j'}}\right).
\end{equation}
Clearly, the process in \eqref{sec4:eq1} is guaranteed to stabilize after a finite number of iterations for all $k$, because the cardinality of each node set $\bB_{r,k}$ is finite and both the support extension and chordal extension steps only add edges.

Given an undirected graph $\gs(\vs, \es)$, we use the symbol $B_{\gs} \in \{0, 1\}^{\abs{\vs}\times\abs{\vs}}$ to refer to the adjacency matrix of $\gs$ whose diagonal are all ones. Then, for $s\ge1$, the corresponding sparse moment relaxation for \eqref{eq::mPMI} is given by
\begin{equation}\label{sec4:eq2}
\lambda^{(s)}_{r}\coloneqq\begin{cases}
\inf&\mL_{\bS}(F)\\
\mathrm{s.t.}&B_{\G_{r,0}^{(s)}}\circ M_{r}(\bS)\in\mathbb{S}_+(\G_{r,0}^{(s)},?),\\
&B_{\G_{r,k}^{(s)}}\circ M_{r-d_k}(G_k\bS)\in\mathbb{S}_+(\G_{r,k}^{(s)},?),\quad k\in[m],\\
&\mL_{\bS}(I_p)=1.
\end{cases}
\end{equation}
We call $s$ the \emph{sparse order} associated with Problem \eqref{sec4:eq2}.


\begin{prop}
The following statements are true:
\begin{enumerate}
    \item[\upshape (i)] Fixing a relaxation order $r\ge r_{\min}$, the sequence $\left\{\lambda^{(s)}_{r}\right\}_{s\ge1}$ is monotonically non-decreasing and $\lambda^{(s)}_{r}\le\lambda^{\star}$ for all $s\ge1$.
    \item[\upshape (ii)] Fixing a sparse order $s\ge 1$, the sequence $\left\{\lambda^{(s)}_{r}\right\}_{r\ge r_{\min}}$ is monotonically non-decreasing.
    \item[\upshape (iii)] Assume that the chordal extension in \eqref{sec4:eq1} is chosen to be the block closure. Then for any $r\ge r_{\min}$, the sequence $\left\{\lambda^{(s)}_{r}\right\}_{s\ge1}$ converges to $\lambda_r$ in finitely many steps.
\end{enumerate}
\end{prop}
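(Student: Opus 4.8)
The plan is to prove the three claims by reducing each to a monotonicity/invariance property of the graphs produced by the iterative procedure, combined with the duality relationship between the moment relaxation \eqref{sec4:eq2} and its SOS counterpart. I would begin by recording the basic duality: for each fixed $r$ and $s$, the moment program \eqref{sec4:eq2} is the convex dual of a term-sparse SOS restriction of \eqref{eq:eig_sos_k} in which the Gram matrices are constrained to the sparsity pattern $\mathbb{S}(\G_{r,k}^{(s)},0)$. This identification is what ties $\lambda_r^{(s)}$ to feasibility of sparse SOS certificates, and it is the workhorse for all three parts.

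For part (i), the key observation is that the chain $(\es_{r,k}^{(s)})_{s\ge1}$ is ascending in $s$: both SupportExtension and ChordalExtension only add edges, so $\G_{r,k}^{(s)}\subseteq\G_{r,k}^{(s+1)}$. A larger graph means a larger (less restrictive) completable-PSD feasible set in \eqref{sec4:eq2}, hence a smaller infimum constraint and a larger optimal value; this gives $\lambda_r^{(s)}\le\lambda_r^{(s+1)}$, i.e. monotone non-decreasing. The bound $\lambda_r^{(s)}\le\lambda^\star$ follows because \eqref{sec4:eq2} is a relaxation of \eqref{eq::mPMI} for every $s$: any genuine minimizer of the PMO problem induces a feasible (atomic) matrix-valued moment sequence whose objective value is $\lambda^\star$ and which satisfies every sparse constraint, since the sparse constraints are implied by the dense ones used in \eqref{eq::mPMI:mom}. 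For part (ii), I would fix $s$ and compare relaxation orders $r$ and $r+1$. A feasible sequence at order $r+1$ restricts (by truncation of $\bS$ to $\N^n_{2r}$) to a feasible sequence at order $r$ with the same objective value, because the moment and localizing matrices at order $r$ are principal submatrices of those at order $r+1$ and the sparsity patterns at order $r$ are the corresponding principal subpatterns; hence $\lambda_r^{(s)}\le\lambda_{r+1}^{(s)}$.

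Part (iii) is the substantive claim and the main obstacle. The statement is that, with the block closure, the stabilized value equals the \emph{dense} order-$r$ value $\lambda_r$ from \eqref{eq::mPMI:mom}. The plan is to show that when the chain \eqref{sec4:eq1} stabilizes at some step $s^\ast$, the resulting graph $\G_{r,k}^{(s^\ast)}$ is \emph{block-closed and support-saturated}: every monomial product $ab$ that actually appears in the relevant constraint sits inside a single diagonal block, so the block-diagonal completion imposed by $\mathbb{S}_+(\G_{r,k}^{(s^\ast)},?)$ is vacuous relative to the coefficient-matching constraints. Concretely, with the block closure each connected component becomes a clique, and the fixed-point condition $\cC^{(s^\ast+1)}_{i,j}=\cC^{(s^\ast)}_{i,j}$ forces every support element generated by a pair within a block to already be accounted for, so by Theorem \ref{th::spositive} the sparse completable constraint on $M_r(\bS)$ (and each localizing matrix) is equivalent to the dense PSD constraint restricted to the stabilized pattern; since no off-block entry of $\bS$ is ever touched by the objective or by coefficient matching, one can freely complete to a full PSD moment/localizing matrix, recovering exactly the dense program \eqref{eq::mPMI:mom}. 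Thus $\lambda_r^{(s^\ast)}=\lambda_r$, and finiteness of $s^\ast$ was already established by the stabilization argument. The delicate point I expect to fight with is the bookkeeping of the index map $\lceil i/q_k\rceil$ and the bars $\bar i,\bar j$ that relate the $pq_k$ block indices to the $p$ outer indices: one must verify that the block-closure cliques align consistently across all $k$ so that a single matrix-valued sequence $\bS$ simultaneously satisfies all the completed constraints, which is exactly where the scalar-case proof of \cite{wang2021tssos} must be adapted to the matrix setting.
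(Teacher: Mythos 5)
Your plan is, in substance, the intended proof: the paper itself disposes of this proposition with a one-line reference to the scalar arguments of \cite{wang2021tssos,wang3}, and your sketches of (ii) and (iii) are the correct matrix adaptations of those arguments. But the justification you give for monotonicity in (i) is backwards as written. In the moment program \eqref{sec4:eq2}, enlarging the graph $\G^{(s)}_{r,k}\subseteq\G^{(s+1)}_{r,k}$ imposes \emph{more} constraints on $\bS$, not fewer: more entries of $B\circ M$ must agree with a PSD completion (equivalently, by Theorem \ref{th::spositive}, larger clique submatrices must be PSD), so the feasible set \emph{shrinks} and the infimum increases. ``Larger (less restrictive) feasible set, hence larger optimal value'' is incoherent for an infimum; the ``less restrictive'' reading is valid only on the dual SOS side, where the sparse Gram cone $\mathbb{S}_+(\G^{(s)}_{r,k},0)$ grows with the graph and the supremum increases. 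Pick one side and argue it consistently. Also, for $\lambda^{(s)}_r\le\lambda^\star$ you should not assume a minimizer exists: take $\x$ with $\lambda_{\min}(F(\x))\le\lambda^\star+\epsilon$, a unit eigenvector $\bv$, and the sequence $S_{\a}=\x^{\a}\bv\bv^{\intercal}$, whose dense moment and localizing matrices are PSD and hence satisfy every sparse constraint. In (ii), the statement that the order-$r$ patterns are principal subpatterns of the order-$(r+1)$ patterns is exactly what must be proved, by induction on $s$: initial supports nest since $2\N^n_r\subseteq2\N^n_{r+1}$ and support extension preserves inclusion, but a generic chordal closure need not (a chordal extension of the larger graph restricted to the smaller node set need not contain the chosen extension of the smaller graph). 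Either restrict to the block closure, where edge-set inclusion implies inclusion of connected components, or impose, as in the scalar papers, that the ChordalExtension operator in \eqref{sec4:eq1} is monotone with respect to graph inclusion.

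In (iii), ``freely complete to a full PSD moment/localizing matrix'' hides the real work: any completion must respect the Hankel (moment) structure, since distinct pairs $(\a,\b)$ and $(\a',\b')$ with $\a+\b=\a'+\b'$ share an entry. The clean way to do what you are attempting is to define a single new sequence $\bS'$ with $S'_{\g}=S_{\g}$ for $\g\in\cC^{(s^\ast)}_{i,j}$ and $S'_{\g}=0$ otherwise. Stabilization then gives precisely the two facts you need: (a) for every edge $\{[\a]^i,[\b]^j\}$ of $\G^{(s^\ast)}_{r,k}$ the update rule forces $\a+\b+\supp{G_k^{\bar{i},\bar{j}}}\subseteq\cC^{(s^\ast)}$, so in-block entries of the localizing matrices of $\bS'$ agree with those of $\bS$; and (b) for every non-edge pair, the support-extension fixed point forces $\left(\a+\b+\supp{G_k^{\bar{i},\bar{j}}}\right)\cap\cC^{(s^\ast)}=\emptyset$, so the corresponding entries of $M_{r-d_k}(G_k\bS')$ vanish identically. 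With the block closure, each completable-cone constraint is equivalent to PSDness of the diagonal blocks, so $M_r(\bS')$ and all $M_{r-d_k}(G_k\bS')$ are, after permutation, block diagonal with PSD blocks, hence dense-feasible; and $\mL_{\bS'}(F)=\mL_{\bS}(F)$, $\mL_{\bS'}(I_p)=1$ because $\supp{F}\subseteq\cC^{(0)}$ and $\mathbf{0}\in\cC^{(0)}_{ii}$. This construction also dissolves the ``alignment across $k$'' difficulty you flag at the end: consistency between the different constraints is automatic because $\bS'$ is defined once at the level of the sequence, not per constraint. Combined with the trivial inequality $\lambda^{(s)}_r\le\lambda_r$ (every dense-feasible $\bS$ is sparse-feasible), this yields $\lambda^{(s^\ast)}_r=\lambda_r$, completing (iii).
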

\begin{proof}
The proofs are similar to the counterparts in \cite{wang2021tssos,wang3}.
\end{proof}

\rev{
Term sparse decompositions are intricately linked to \emph{PMI sign symmetries} of the underlying polynomial matrices.


\begin{defn}\label{def::ss1}
A \emph{PMI sign symmetry} of a polynomial matrix $P(\x) \in \psd^p[\x]$ is a binary vector $\theta \in \{-1, 1\}^n$ such that either $P(\theta \circ \x)=P(\x)$ or there exists a complete bipartite graph $\gs^{\theta}(\vs, \es)$ with $\vs=[p]$ and satisfying
\begin{enumerate}
    \item[\rm (1)] $[P(\theta \circ \x)]_{ij} = [P(\x)]_{ij}$ if $i=j$ or $\{i,j\}\notin\es$;
    \item[\rm (2)] $[P(\theta \circ \x)]_{ij} = -[P(\x)]_{ij}$ if $\{i,j\}\in\es$.
\end{enumerate}
In addition, we define the binary matrix $E^{P,\theta}\in \{-1, 1\}^{p\times p}$ such that $P(\theta \circ \x)=E^{P,\theta}\circ P(\x)$.
\end{defn}

The name ``PMI sign symmetry'' is justified by the following proposition.
\begin{prop}\label{prop:ss}
Let $P(\x) \in \psd^p[\x]$ and $\theta$ be a PMI sign symmetry of $P(\x)$. If $P(\x)\succeq0$, then $P(\theta \circ \x)\succeq0$.
\end{prop}
\begin{proof}
Assume that $E^{P,\theta}$ is not an all-one matrix. One can easily show that $E^{P,\theta}\succeq0$ from the fact that $\gs^{\theta}(\vs, \es)$ is a complete bipartite graph. Therefore, we have $P(\theta \circ \x)=E^{P,\theta}\circ P(\x)\succeq0$.
\end{proof}

The PMI sign symmetry is a generalization of the usual sign symmetry (corresponding to $P(\theta \circ \x)=P(\x)$). The latter was studied in \cite{lofberg2008block} to yield a block-diagonal structure for matrix SOS decompositions. It turns out that the more general PMI sign symmetry can also give rise to a block-diagonal structure for matrix SOS decompositions.

\begin{thm}\label{sec4:thm0}
Suppose that $F(\x) \in \psd^p[\x]$ admits a representation:
\begin{equation}
    F(\x) = \diag{\bv^{1}(\x), \bv^{2}(\x), \ldots, \bv^{p}(\x)}^{\intercal}Q\diag{\bv^{1}(\x), \bv^{2}(\x), \ldots, \bv^{p}(\x)}
\end{equation}
with $Q\succeq0$. Let $\Theta$ collect all PMI sign symmetries of $F(\x)$. We define a $\{0,1\}$-binary matrix $B$ indexed by $\{\bv^{1}(\x), \bv^{2}(\x), \ldots, \bv^{p}(\x)\}$ via
\begin{equation}
    B_{[a]^i,[b]^j}=\begin{cases}
1,\quad&\text{ if }\left([a]^i\cdot[b]^j\right)(\theta \circ \x)=E^{F,\theta}_{ij}\cdot\left([a]^i\cdot[b]^j\right)(\x),\quad\forall\theta\in\Theta,\\
0,\quad&\text{ otherwise.}\end{cases}
\end{equation}
Then, it holds
\begin{equation}\label{ss1}
    F(\x) = \diag{\bv^{1}(\x), \bv^{2}(\x), \ldots, \bv^{p}(\x)}^{\intercal}\left(B\circ Q\right)\diag{\bv^{1}(\x), \bv^{2}(\x), \ldots, \bv^{p}(\x)}.
\end{equation}
\end{thm}
\begin{proof}
Let us consider the scalarization $\y^\intercal F(\x)\y$ of $F(\x)$. It can be seen that the PMI sign symmetries of $F(\x)$ are in a one-to-one correspondence to the usual sign symmetries of $\y^\intercal F(\x)\y$. Then, the block-diagonal structure in \eqref{ss1} follows from the block-diagonal structure given by the sign symmetries of $\y^\intercal F(\x)\y$ \cite{lofberg2009pre}.
\end{proof}

Theorem \ref{prop:ss} can be further extended to the constrained case as follows.
\begin{thm}\label{sec4:thm1}
Let $F\in\mS^p[\x]$ and $G_k\in\mS^{q_k}[\x], k\in[m]$. Suppose that $F$ admits a representation:
\begin{equation}
F=\left(I_p\otimes \bm_r(\x) \right)^{\intercal} Q_0 \left(I_p\otimes \bm_r(\x)\right)+\sum_{k=1}^m\left\langle \left(I_{pq_k}\otimes \bm_{r-d_k}(\x) \right)^{\intercal} Q_k \left(I_{pq_k}\otimes \bm_{r-d_k}(\x)\right), G_k\right\rangle_p,
\end{equation}
where $Q_0,Q_1,\ldots,Q_m\succeq0$. Let $\Theta$ collect the common PMI sign symmetries of $F,G_1,\ldots,G_m$. Let $B_0$ be defined in the same manner as $B$ in Theorem \ref{sec4:thm0}. Moreover, for each $k\in[m]$, we define a $\{0,1\}$-binary matrix $B_k$ indexed by $\{\bm^{1}_{r-d_k}(\x), \ldots, \bm_{r-d_k}^{pq_k}(\x)\}$ via
\begin{equation}
    [B_k]_{[a]^{(i-1)q_k+\ell},[b]^{(j-1)q_k+w}}=\begin{cases}
1,\quad&\text{ if }c(\theta \circ \x)=E^{F,\theta}_{ij}\cdot E^{G_k,\theta}_{\ell w}\cdot c(\x),\quad\forall\theta\in\Theta,\\
0,\quad&\text{ otherwise,}\end{cases}
\end{equation}
where $c\coloneqq[a]^{(i-1)q_k+\ell}\cdot[b]^{(j-1)q_k+w}$.
Then, $F$ admits the following block-diagonal representation:
\begin{equation}
F=\left(I_p\otimes \bm_r(\x) \right)^{\intercal} (B_0\circ Q_0) \left(I_p\otimes \bm_r(\x)\right)+\sum_{k=1}^m\left\langle \left(I_{pq_k}\otimes \bm_{r-d_k}(\x) \right)^{\intercal} (B_k\circ Q_k) \left(I_{pq_k}\otimes \bm_{r-d_k}(\x)\right), G_k\right\rangle_p.
\end{equation}
\end{thm}

The proof of Theorem \ref{sec4:thm1} also utilizes the scalarization trick, but is more technical and we omit here for conciseness.
As a corollary of Theorem \ref{sec4:thm1}, we immediately obtain the following PMI sign symmetry adapted Positivstellensatz for polynomial matrices.
\begin{cor}
Let $F\in\mS^p[\x]$ and $G_k\in\mS^{q_k}[\x], k\in[m]$. Suppose that $F$ is positive definite on $\mathbf{K}$ defined in \eqref{eq:bsa_set} and the related quadratic module $\Sigma^p[\bG]$ satisfies the Archimedean property. Let $\{B_k\}_{k=0}^m$ be the $\{0,1\}$-binary matrices provided in Theorem \ref{sec4:thm1} by considering the common PMI sign symmetries of $F,G_1,\ldots,G_m$.
Then, $F$ admits a block-diagonal representation:
\begin{equation}
F=\left(I_p\otimes \bm_r(\x) \right)^{\intercal} (B_0\circ Q_0) \left(I_p\otimes \bm_r(\x)\right)+\sum_{k=1}^m\left\langle \left(I_{pq_k}\otimes \bm_{r-d_k}(\x) \right)^{\intercal} (B_k\circ Q_k) \left(I_{pq_k}\otimes \bm_{r-d_k}(\x)\right), G_k\right\rangle_p.
\end{equation}
for some $Q_0,Q_1,\ldots,Q_m\succeq0$.
\end{cor}
\begin{proof}
This follows from Scherer and Hol's Positivestellensatz and Theorem \ref{sec4:thm1}.
\end{proof}



For scalar polynomial optimization, the block structures of the term sparsity iterations with block closures converge to the one determined by sign symmetries \cite{wang2021tssos}. This result could be extended to the case of PMO by considering PMI sign symmetries.

\begin{thm}
Consider the PMO problem \eqref{eq::mPMI}. The block structures of the term sparsity iterations with block closures converge to the one determined by the common PMI sign symmetries of $F(\x),G_1(\x),\ldots,G_m(\x)$.
\end{thm}
\begin{proof}
We assume that $q_1=\cdots=q_m=1$. The proof for the general case is more involved but follows in a similar manner. Now the PMO problem \eqref{eq::mPMI} is equivalent to the following scalar polynomial optimization problem:
\begin{equation}\label{eq::scalarized}
	\inf_{\x\in\R^n, \y\in\R^p} \y^\intercal F(\x)\y \quad \mathrm{s.t. }\  
    G_1(\x)\ge 0,\ldots,G_m(\x)\ge 0,\ 1-y_1^2-\cdots-y_p^2=0.
\end{equation}
Comparing the term sparsity iterations for \eqref{eq::mPMI} described in this work with the ones for \eqref{eq::scalarized} developed in \cite{wang2021tssos}, we find that two nodes $[\a]^i$ and $[\b]^j$ are connected in the matrix setting if and only if $\x^{\a}y_i$ and $\x^{\b}y_j$ are connected in the scalar setting. 
Moreover, the PMI sign symmetries of \eqref{eq::mPMI} are in a one-to-one correspondence to the usual sign symmetries of \eqref{eq::scalarized}.
Therefore,  by \cite[Corollary 6.8]{wang2021tssos},
the block structures of the term sparsity iterations with block closures for
\eqref{eq::mPMI} converge to the one 
determined by the common PMI sign symmetries of $F(\x),G_1(\x),\ldots,G_m(\x)$.
\end{proof}}


\begin{exmp}
Consider the problem
\begin{equation}\label{sec4:ex1}
\inf\limits_{\x\in\R^2}\lambda_{\min}(F(\x))\quad\mathrm{s.t.}\quad 1 - x_1^2 - x_2^2\ge 0
\end{equation}
with
\begin{equation}
F(\x) = \begin{bmatrix}x_1^2&x_1+x_2\\ 
x_1+x_2& x_2^2\end{bmatrix}.
\end{equation}
Note that $F(\x)$ has a nontrivial PMI sign symmetry $\theta=(-1,-1)$.
With the relaxation order $r=2$, the procedure for exploiting term sparsity with block closures stabilizes at $s=2$, giving rise to two blocks for the moment matrix: 
\begin{equation}\label{eq::partition1}
\{[1]^1,[x_1]^2,[x_2]^2, [x_1^2]^1,[x_1x_2]^1,[x_2^2]^1\},
\qquad \{[1]^2,[x_1]^1,[x_2]^1, [x_1^2]^2,[x_1x_2]^2,[x_2^2]^2\}.
\end{equation}
The corresponding sparse relaxation \eqref{eq::mPMI} yields a lower bound $-0.9142$ which coincides with the bound given by the second order dense relaxation.

\end{exmp}

\section{Correlative Sparsity}
\label{sec:correlative}

The aim of this section is to explore the correlative sparsity of the PMO problem
\eqref{eq::mPMI}:
\begin{equation*}
\lambda^{\star}\coloneqq\inf_{\x\in\R^n}\lambda_{\min}(F(\x))\quad \text{s.t. }\  G_1(\x)\succeq 0,\ldots,G_m(\x)\succeq 0, 
\end{equation*}
where $F\in\mS^p[\x]$ and $G_k\in\mS^{q_k}[\x], k\in[m]$.

We define the {\em correlative sparsity pattern (CSP) graph} of \eqref{eq::mPMI}, denoted by $\G^{\mathrm{csp}}$, such that $\vs(\G^{\mathrm{csp}})=[n]$ and $\{i,j\}\in \es(\G^{\mathrm{csp}})$ if one of following conditions holds:
\begin{enumerate}
    \item[(i)] There exists $\a\in\supp{F}$ such that $i,j\in\supp{\a}\coloneqq\{k\in[n]\mid\alpha_k\ne0\}$;
    \item[(ii)] There exists $k\in[m]$ such that $x_i,x_j\in\var(G_k)$, where $\var(G_k)$ is the subset of variables effectively involved in $G_k$.
\end{enumerate}

Let $\{\mI_\ell\}_{\ell=1}^t$ be the list of maximal cliques of $\G^{\mathrm{csp}}$ with $n_\ell\coloneqq|\mI_\ell|$. 
For $\ell\in[t]$, let $\R[\x(\mI_\ell)]$ denote the ring of polynomials in the $n_\ell$ variables $\x(\mI_\ell) \coloneqq\{x_i\mid i\in \mI_\ell\}$. 
We can partition the constraint polynomial matrices $G_1,\ldots,G_m$ into groups 
$\{G_k\mid k\in \mJ_\ell\}, \ell=1,\ldots,t$ which satisfy
\begin{enumerate}
    \item[(i)] $\mJ_1,\ldots,\mJ_t\subseteq[m]$ are pairwise disjoint and $\cup_{\ell=1}^t\mJ_\ell=[m]$;
    \item[(ii)] For any $k\in \mJ_\ell$, $\var(G_k)\subseteq \mI_\ell$, $\ell=1,\ldots,t$.
\end{enumerate}

Given a sequence $\bS=(S_{\a})_{\a\in\N^n}\subseteq\mS^p$ and $\ell\in[t]$, for $d\in\N$ and $G\in\R[\x(\mI_\ell)]$, 
let $M_d(\bS, \mI_\ell)$ (resp. $M_d(G\bS, \mI_\ell)$)
be the moment (resp. localizing) submatrix obtained from $M_d(\bS)$ (resp. $M_d(G\bS)$) 
by retaining only those block rows and columns indexed by $\a\in \N^n_d$
with $\supp{\a}\subseteq \mI_\ell$.

Then with $r\ge r_{\min}$, the $r$-th order correlative sparsity adapted moment relaxation for \eqref{eq::mPMI} is given by
\begin{equation}\label{eq::mPMI::csmom}
\lambda^{(\cs)}_{r}\coloneqq
\begin{cases}
\inf &\mL_{\bS}(F)\\
\mathrm{s.t.}&M_{r}(\bS, \mI_\ell)\succeq0,\quad \ell\in[t],\\
&M_{r-d_k}(G_k\bS, \mI_\ell)\succeq0,\quad k\in \mJ_\ell, \ell\in[t],\\
&\mL_{\bS}(I_p)=1,
\end{cases}
\end{equation}
and its dual problem reads as 
\begin{equation}\label{eq::mPMI::cssos}
    \begin{cases}\sup&\lambda \\
    \mathrm{s.t.} &F-\lambda I_p=\sum_{\ell=1}^t\left(S_{\ell,0}
    +\sum_{k\in \mJ_\ell}\left\langle S_{\ell,k}, G_k\right\rangle_{p}\right),\\
    &S_{\ell,0}\in\Sigma^p[\x(\mI_{\ell})]_{2r},\ S_{\ell,k}\in\Sigma^{pq_k}[\x(\mI_\ell)]_{2(r-d_k)},\quad k\in \mJ_\ell,\ell\in[t].
    \end{cases}
\end{equation}
Each $\lambda^{(\cs)}_r$ therefore provides a lower bound on $\lambda^{\star}$ and
the sequence $\left\{\lambda^{(\cs)}_r\right\}_{r\ge r_{\min}}$ is monotonically non-decreasing. Moreover, asymptotic convergence to $\lambda^{\star}$ can be guaranteed under appropriate conditions when $p=1$.

\begin{thm}{\rm (\cite[Theorem 1]{KM2009})}\label{sec5:thm0}
Let $p=1$. Assume that the following conditions hold:
\begin{enumerate}
   \item[\upshape (i)] The CSP $\{\mI_\ell\}_{\ell=1}^t$ satisfies the running intersection property, i.e., for every $\ell\in[t-1]$, there exists $s\in [\ell]$ such that
   \[\mI_{\ell+1}\cap \bigcup_{j=1}^{\ell} \mI_j\subseteq \mI_s;\]
   \item[\upshape (ii)] For each $\ell\in [t]$, there exists $a_\ell>0$ such that 
   \[
   a_\ell-\sum_{i\in \mI_\ell} x_i^2= \sigma_{\ell,0}
   +\sum_{k\in \mJ_\ell}\langle S_{\ell,k}, G_k\rangle,
   \]
   where $\sigma_{\ell,0}\in\Sigma^1[\x],\ S_{\ell,k}\in\Sigma^{q_k}[\x]$.
\end{enumerate}
Then $\lim_{r\to\infty}\lambda^{(\cs)}_r=\lambda^{\star}$.
\end{thm}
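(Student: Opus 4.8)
The plan is to deduce the asymptotic convergence from a \emph{sparse} Putinar-type representation theorem, exactly in the spirit of the scalar correlative-sparsity literature. Since we already know that $\lambda^{(\cs)}_r\le\lambda^{\star}$ for all $r$ and that $\{\lambda^{(\cs)}_r\}_r$ is non-decreasing, it suffices to show that for every $\epsilon>0$ there is a finite order $r_\epsilon$ with $\lambda^{(\cs)}_{r_\epsilon}\ge\lambda^{\star}-\epsilon$. Because condition (ii) forces each clique-wise quadratic module $Q_\ell\coloneqq\Sigma^1[\x(\mI_\ell)]+\sum_{k\in\mJ_\ell}\{\langle S,G_k\rangle:S\in\Sigma^{q_k}[\x(\mI_\ell)]\}$ to be Archimedean, $\K$ is compact and $\lambda^\star=\min_{\x\in\K}F(\x)$ is attained, so $f\coloneqq F-(\lambda^\star-\epsilon)$ satisfies $f\ge\epsilon>0$ on $\K$. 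Moreover, by the very definition of $\G^{\mathrm{csp}}$ the support of each monomial of $F$ is a clique and hence lies in some maximal clique, so $f$ splits as $f=\sum_{\ell=1}^t f_\ell$ with $f_\ell\in\R[\x(\mI_\ell)]$. If we can produce a representation $f=\sum_{\ell=1}^t\big(\sigma_{\ell,0}+\sum_{k\in\mJ_\ell}\langle S_{\ell,k},G_k\rangle\big)$ with $\sigma_{\ell,0}\in\Sigma^1[\x(\mI_\ell)]$ and $S_{\ell,k}\in\Sigma^{q_k}[\x(\mI_\ell)]$, then truncating at the largest degree appearing yields a feasible point of the dual \eqref{eq::mPMI::cssos} with value $\lambda^\star-\epsilon$, which gives the claimed bound.

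The core is therefore the sparse Positivstellensatz: \emph{under the RIP and condition (ii), every $f=\sum_\ell f_\ell$ with $f_\ell\in\R[\x(\mI_\ell)]$ that is strictly positive on $\K$ admits the clique-structured representation above.} I would prove this by induction on the number of cliques $t$. The base case $t=1$ is precisely scalar Putinar applied to the single Archimedean module $Q_1$ supplied by condition (ii). For the inductive step, order the cliques so that the RIP holds and isolate the last one: by the RIP the separator $\mI_\cap\coloneqq\mI_t\cap\bigcup_{\ell<t}\mI_\ell$ is contained in a single earlier clique $\mI_s$, and writing $f=\big(\sum_{\ell<t}f_\ell\big)+f_t$ exhibits $f$ as a sum of a polynomial in the variables indexed by $\bigcup_{\ell<t}\mI_\ell$ and one in $\x(\mI_t)$.

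The crux is the following splitting construction. I look for a polynomial $\phi\in\R[\x(\mI_\cap)]$ in the separator variables such that $g\coloneqq\sum_{\ell<t}f_\ell+\phi$ is strictly positive on the set $K_1$ cut out by the first $t-1$ cliques, while $f_t-\phi$ is strictly positive on the set $K_2$ cut out by clique $t$. Because each constraint involves a single clique, $\K$ is the fibre product of $K_1$ and $K_2$ over the shared coordinates, so the pointwise requirement $\max\{-\sum_{\ell<t}f_\ell\}<\min\{f_t\}$ at each value of the separator variables $\x(\mI_\cap)$ is \emph{equivalent} to strict positivity of $f$ on $\K$. Compactness of $K_1,K_2$ (from the Archimedean hypothesis) makes both sides continuous functions of $\x(\mI_\cap)$ separated by a positive gap, so a continuous interpolant exists, and a Weierstrass approximation produces the polynomial $\phi$ while preserving the strict margins. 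This step—constructing $\phi$ and controlling the approximation error—is where essentially all the analytic difficulty sits. Once $\phi$ is available, scalar Putinar applied to the Archimedean module $Q_t$ gives $f_t-\phi\in Q_t$; and, crucially, since $\phi\in\R[\x(\mI_s)]$ it can be absorbed into the $s$-th summand, so $g=\sum_{\ell<t,\,\ell\ne s}f_\ell+(f_s+\phi)$ is again clique-sparse with respect to the first $t-1$ cliques and strictly positive on $K_1$. The induction hypothesis (the first $t-1$ cliques inherit the RIP and condition (ii)) then yields $g\in\sum_{\ell<t}Q_\ell$, whence $f=g+(f_t-\phi)\in\sum_{\ell\le t}Q_\ell$.

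Assembling these pieces gives the sparse representation of $f=F-(\lambda^\star-\epsilon)$, and reading off the degrees produces the order $r_\epsilon$ certifying $\lambda^{(\cs)}_{r_\epsilon}\ge\lambda^\star-\epsilon$; letting $\epsilon\to0$ and combining with monotonicity and the upper bound $\lambda^{(\cs)}_r\le\lambda^\star$ finishes the proof. The main obstacle, as flagged, is the separator construction in the inductive step: ensuring that the interpolant in the shared variables is genuinely polynomial and keeps strict positivity on both pieces, and that the RIP allows it to be absorbed into a single clique so that the sparse structure is propagated through the recursion.
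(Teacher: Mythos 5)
The paper offers no proof of this statement: it is imported verbatim from \cite[Theorem 1]{KM2009}, so your attempt can only be compared with the argument in the cited literature. Measured against that, your outline is the standard one (Lasserre \cite{LasserreSparsity}, Grimm--Netzer--Schweighofer \cite{GNS2007}, extended to conic/matrix constraints in \cite{KM2009}): reduce convergence to a clique-sparse Positivstellensatz for $f=F-(\lambda^{\star}-\epsilon)$, induct on the cliques, use the running intersection property to place the separator $\mI_t\cap\bigcup_{\ell<t}\mI_\ell$ inside a single earlier clique $\mI_s$, split $f$ by a polynomial $\phi$ in the separator variables, and certify clique by clique. Your fibre-product observation is genuinely correct here (constraints are cliquewise, so any pair of partial points agreeing on the separator glues to a point of $\K$), and you correctly identify that the RIP is used only to absorb $\phi$ into the $s$-th summand so the sparse structure survives the recursion.

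Two repairs are needed before the sketch is a proof. First, since the $G_k$ are matrix-valued, the per-clique certificate is not ``scalar Putinar'': the module $Q_\ell$ contains terms $\langle S,G_k\rangle$ with matrix SOS multipliers, so each clique step must invoke Scherer and Hol's Positivstellensatz \cite[Corollary 1]{scherer2006matrix} with $p=1$; condition (ii) is exactly the Archimedean hypothesis for that matrix quadratic module, so this is a citation-level fix, not a new idea. Second, your continuity claim is false in general: the fibre-wise minimum $F_2(\y)=\min\{f_t\}$ over the $K_2$-fibre is only lower semicontinuous, and the fibre-wise maximum $-F_1(\y)$ over the $K_1$-fibre only upper semicontinuous, because the fibre correspondence of a compact set is merely upper hemicontinuous. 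The semicontinuities do point the right way: strict positivity of $f$ on $\K$ yields a uniform gap $-F_1+\epsilon^{*}\le F_2$ on the compact common projection, so an insertion theorem (Kat\v{e}tov--Tong/Hahn) between the u.s.c.\ and l.s.c.\ envelopes supplies a continuous interpolant, and Weierstrass approximation within the gap then produces the polynomial $\phi$; one must additionally impose the correct one-sided bound on $\phi$ over the parts of the two projections where only one fibre is nonempty. With these two substitutions, your induction goes through, the truncation of the resulting identity gives a feasible point of \eqref{eq::mPMI::cssos} of value $\lambda^{\star}-\epsilon$, and the argument matches the proof underlying the cited theorem.
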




In view of Theorem \ref{sec5:thm0}, one may expect that the asymptotic convergence also holds true when $p>1$ under similar conditions.
However, the following counterexample (inspired by \cite{ning2014matrix}) shows that this is not the case even when $p=2$.
In other words, the matrix counterpart (i.e., for $p>1$) of Putinar's Positivstellensatz for polynomials 
with correlative sparsity studied in \cite{KM2009,LasserreSparsity,GNS2007} does not hold in general. 
 
\begin{exmp} 
Let 
\begin{equation*} 
F(\x)=\begin{bmatrix}
2(x_1-1)^2+(x_2-1)^2+(x_2-2)^2& 3-2x_2\\ 3-2x_2&2(x_1-2)^2+(x_2-1)^2+(x_2-2)^2
\end{bmatrix} \end{equation*} 
and 
\[
\K=\left\{(x_1, x_2)\in\R^2 : 4-x_1^2\ge0, \, 4-x_2^2\ge0\right\}.
\]
Let 
\[
Q=\begin{bmatrix}
\frac{13}{2} & 3 &  -2 &  -3   &-3    &2\\
    3  & \frac{25}{2}    &3   &-4   &-4 &  -3\\
   -2 &   3 &   2&    0 &   0 &  -2\\
   -3 &  -4 &   0&    2 &   2 &   0\\
   -3 &  -4 &   0&    2 &   2 &   0\\
    2 &  -3 &  -2&    0 &   0 &   2
\end{bmatrix}.
\]
The matrix $Q$ has three eigenvalues: $0$ and $\frac{27}{2}\pm 3\sqrt{2}$ (so $Q\succeq 0$), and 
\[
F(\x)-\frac{1}{2}I_2= (\bm_1(\x)\otimes I_2)^{\intercal}Q(\bm_1(\x)\otimes I_2)\succeq 0.
\]
Moreover, we have $(\frac{3}{2}, 1)\in \K$ and $\lambda_{\min}\left(F(\frac{3}{2}, 1)\right)=\frac{1}{2}$, so $\inf_{\x\in\K}\lambda_{\min}(F(\x))=\frac{1}{2}$ (whose minimizers form a circle: $(x_1-\frac{3}{2})^2+(x_2-\frac{3}{2})^2=\frac{1}{4}$).
Hence, for any $\varepsilon>0$, 
\[
F(\x)-\left(\frac{1}{2}-\varepsilon\right)I_2\succ 0,\quad \forall \x\in\K.
\]
However, for any $\varepsilon\in(0, \frac{1}{2})$, we will show that the representation
\begin{equation}\label{eq::smp}
F(\x)-\left(\frac{1}{2}-\varepsilon\right)I_2=\sum_{i=1}^2\left(S_{i,0}(x_i)+(4-x_i^2)S_{i,1}(x_i)\right),
\end{equation}
where each $S_{i,j}\in\Sigma^2[x_i]$, does not hold.
Suppose on the contrary that there exists a representation \eqref{eq::smp} for some $\varepsilon\in(0, \frac{1}{2})$.
Let $\Phi_1,\Phi_2$ be the following matrix-valued atomic measures defined in the $x_1$-space and $x_2$-space, respectively:
\[
\Phi_1=\begin{bmatrix}
1 & 0\\ 0 & 0
\end{bmatrix}\delta_{(x_1=1)}+
\begin{bmatrix}
0 & 0\\ 0 & 1
\end{bmatrix}\delta_{(x_1=2)},\quad
\Phi_2=\begin{bmatrix}
\frac{1}{2} & -\frac{1}{2}\\ -\frac{1}{2} & \frac{1}{2}
\end{bmatrix}\delta_{(x_2=1)}+
\begin{bmatrix}
\frac{1}{2} & \frac{1}{2}\\ \frac{1}{2} & \frac{1}{2}
\end{bmatrix}\delta_{(x_2=2)}.
\]
Then, we define a linear functional $\mL : \mS^2[x_1]+\mS^2[x_2] \to \R$ by
\[
\mL\left(H_1(x_1)+H_2(x_2)\right)=
\mL_{\Phi_1}(H_1(x_1))+\mL_{\Phi_2}(H_2(x_2)), \quad\forall H_1(x_1)\in \mS^2[x_1], H_2(x_2)\in \mS^2[x_2].
\]
Since $\mL_{\Phi_1}(I_2)=\mL_{\Phi_2}(I_2)=1$, there is no ambiguity in the above definition. 
One can easily check that $\mL(F)=0$. Now applying $\mL$ to both sides
of \eqref{eq::smp}, we obtain
\[
0-\left(\frac{1}{2}-\varepsilon\right)=\mL\left(F(\x)-\left(\frac{1}{2}-\varepsilon\right)I_2\right)
=\mL\left(\sum_{i=1}^2\left(S_{i,0}(x_i)+(4-x_i^2)S_{i,1}(x_i)\right)\right)\ge 0,
\]
which is a contradiction.
\end{exmp}

\rev{
\begin{rem}
We are not aware of any simple condition that can guarantee asymptotic convergence of the correlative sparsity adapted hierarchy for PMO with a matrix objective. This will be pursued in our future research. On the other hand, one may restore asymptotic convergence in the case of correlative sparsity by considering the scalarized problem $\inf_{\x\in\K, \|\y\|^2=1} \y^\intercal F(\x)\y$.
\end{rem}}

Even though asymptotic convergence does not hold for PMO with correlative sparsity in general, finite convergence may still occur in practice, and this convergence could be detected by flatness conditions. For the rest of this section, we establish several results on detecting finite convergence and extracting optimal solutions when exploiting correlative sparsity. For each $\ell\in[t]$, let us define
\begin{equation}
    \K_\ell\coloneqq\left\{\x(\ell)\in\R^{|\mI_{\ell}|}\,\middle|\, G_k(\x)\succeq0,\quad \forall k\in \mJ_\ell\right\},
\end{equation}
where $\x(\ell)\coloneqq(x_i)_{i\in\mI_{\ell}}$.

\begin{thm}\label{sec5:thm1}
Let $\bS$ be an optimal solution  
of \eqref{eq::mPMI::csmom}. Assume that the following conditions hold:
\begin{enumerate}
    \item[\upshape (i)] For each $\ell\in[t]$, $\bS^{\ell}\coloneqq(S_{\a})_{\supp{\a}\subseteq\mI_\ell}$
admits a representing measure
$\Phi_{\ell}=S_{\mathbf{0}}\left(\sum_{i=1}^{s_{\ell}}\xi_{i\ell}\delta_{\x^{i}(\ell)}\right)$ for some $\xi_{i\ell}>0$ with $\sum_{i=1}^{s_{\ell}}\xi_{i\ell} = 1$ and $\x^{1}(\ell),\ldots,\x^{s_{\ell}}(\ell)\in\K_{\ell}$;
    \item[\upshape (ii)] For all pairs $\{i,j\}$ with $\mI_{i}\cap\mI_{j}\ne\emptyset$, one has
     $\rank{M_{r}(\bS, \mI_{i}\cap\mI_{j})}=1$.
\end{enumerate}
Then $\lambda_{r}^{(\cs)}=\lambda^{\star}$. Moreover, each point $\hat{\x}\in\R^n$ satisfying $\hat{\x}(\ell)\coloneqq(\hat{x}_i)_{i\in\mI_\ell}=\x^{i_{\ell}}(\ell)$ for some $i_{\ell}\in[s_{\ell}]$, $\ell\in[t]$, is an optimal solution of \eqref{eq::mPMI}.
\end{thm}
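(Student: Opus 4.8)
The plan is to prove $\lambda_r^{(\cs)}=\lambda^\star$ by assembling the local measures $\Phi_\ell$ into a single global finitely atomic PSD matrix-valued measure on $\K$ with the same objective value as $\bS$, and then to read the minimizers off its atoms. Since $\lambda_r^{(\cs)}\le\lambda^\star$ always holds (the relaxation \eqref{eq::mPMI::csmom} is a lower bound), it suffices to produce feasible points of \eqref{eq::mPMI} whose averaged objective equals $\mL_{\bS}(F)$. Two preliminary observations set the stage. First, by the definition of $\G^{\mathrm{csp}}$ every $\a\in\supp{F}$ has $\supp{\a}$ contained in a single maximal clique, so $F=\sum_{\ell=1}^t F_\ell$ with $F_\ell\in\mS^p[\x(\mI_\ell)]$ (assign each symmetric coefficient $F_\a$ to a clique containing $\supp{\a}$), whence $\mL_{\bS}(F)=\sum_\ell \mL_{\bS^\ell}(F_\ell)=\sum_\ell \mL_{\Phi_\ell}(F_\ell)$. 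Second, the normalization $\mL_{\bS}(I_p)=1$ reads $\tr{S_{\mathbf{0}}}=1$, so $S_{\mathbf{0}}\succeq0$ is nonzero.

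Next I would use hypothesis (ii) to force the atoms to be compatible on shared variables. Fix $\{i,j\}$ with $\mI_i\cap\mI_j\ne\emptyset$. The matrix $M_{r}(\bS,\mI_i\cap\mI_j)$ is precisely the order-$r$ moment matrix of the marginal of $\Phi_i$ onto the variables $\x(\mI_i\cap\mI_j)$; by (i) this marginal is $S_{\mathbf{0}}\sum_v c_v\,\delta_v$, where $v$ runs over the distinct projections of the atoms $\x^a(i)$ onto $\x(\mI_i\cap\mI_j)$ and $c_v>0$. Its $(\mathbf{0},\mathbf{0})$ block is $S_{\mathbf{0}}$, so $\rank{S_{\mathbf{0}}}\le\rank{M_{r}(\bS,\mI_i\cap\mI_j)}=1$. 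If two projections $v\ne v'$ occurred, the degree-$\le1$ principal submatrix would already have rank at least two, because the vectors $(1,v)$ and $(1,v')$ are linearly independent and $S_{\mathbf{0}}\ne0$; this contradicts rank one. Hence all atoms of $\Phi_i$ share one common value on $\x(\mI_i\cap\mI_j)$, and since the same moments describe $\Phi_j$, this value agrees across $i$ and $j$. In particular any variable belonging to more than one clique takes a single common value in all atoms of all cliques containing it; only a clique's private variables may differ among its atoms.

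With this compatibility, gluing is unobstructed. For any tuple $(i_1,\ldots,i_t)$ with $i_\ell\in[s_\ell]$ the point $\x(i_1,\ldots,i_t)\in\R^n$ whose $\mI_\ell$-block equals $\x^{i_\ell}(\ell)$ is well defined (shared coordinates agree) and lies in $\K$ (its $\ell$-block lies in $\K_\ell$ for every $\ell$, and each $G_k$ with $k\in\mJ_\ell$ uses only $\x(\mI_\ell)$). Define the global measure
\[
\hat\Phi\coloneqq S_{\mathbf{0}}\sum_{(i_1,\ldots,i_t)}\Big(\prod_{\ell=1}^t \xi_{i_\ell\ell}\Big)\,\delta_{\x(i_1,\ldots,i_t)}\in\mathfrak{M}^p_+(\K).
\]
A direct marginalization using $\sum_{i_\ell}\xi_{i_\ell\ell}=1$ shows that the marginal of $\hat\Phi$ onto $\x(\mI_\ell)$ equals $\Phi_\ell$ for each $\ell$. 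As $F_\ell$ depends only on $\x(\mI_\ell)$, this gives $\mL_{\hat\Phi}(F_\ell)=\mL_{\Phi_\ell}(F_\ell)$ and therefore $\mL_{\hat\Phi}(F)=\mL_{\bS}(F)=\lambda_r^{(\cs)}$.

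Finally I would close the loop by an inequality chain. For every atom $\x_g$ of $\hat\Phi$, PSDness of $S_{\mathbf{0}}$ gives $\tr{F(\x_g)S_{\mathbf{0}}}\ge\lambda_{\min}(F(\x_g))\tr{S_{\mathbf{0}}}=\lambda_{\min}(F(\x_g))\ge\lambda^\star$, the last inequality because $\x_g\in\K$. Averaging against the positive weights summing to one yields $\lambda_r^{(\cs)}=\mL_{\hat\Phi}(F)\ge\lambda^\star$, so with $\lambda_r^{(\cs)}\le\lambda^\star$ we conclude $\lambda_r^{(\cs)}=\lambda^\star$. Equality throughout forces $\lambda_{\min}(F(\x_g))=\lambda^\star$ at every atom carrying positive weight; since all product weights $\prod_\ell\xi_{i_\ell\ell}$ are strictly positive, every glued point $\hat\x$ with $\hat\x(\ell)=\x^{i_\ell}(\ell)$ is a minimizer of \eqref{eq::mPMI}, which is the claimed ``moreover''. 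I expect the principal difficulty to be the rigorous passage from the rank-one hypothesis (ii) to the support statement that all local atoms coincide on the overlap variables in the matrix-valued setting --- making precise how a rank-one matrix moment matrix collapses the projected support to a single point and forces $\rank{S_{\mathbf{0}}}=1$. The measure gluing and the final bookkeeping are then routine.
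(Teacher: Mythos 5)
Your proof is correct and follows essentially the same route as the paper's: use the rank-one overlap condition to show the shared coordinates are uniquely determined, glue the local atoms into all product points $\hat{\x}$ with weight matrices $\prod_{\ell}\xi_{i_\ell\ell}S_{\mathbf{0}}$ summing to $S_{\mathbf{0}}$, verify the marginals recover each $\Phi_\ell$, and close with the sandwich $\lambda^{\star}\ge\lambda_r^{(\cs)}=\mL_{\hat{\Phi}}(F)\ge\lambda^{\star}$. If anything, your argument is more complete than the paper's, since you explicitly justify the two steps the paper only asserts --- that $\rank{M_r(\bS,\mI_i\cap\mI_j)}=1$ collapses the projected support to a single common point (via linear independence of $(1,v)$ and $(1,v')$ against $S_{\mathbf{0}}\ne 0$), and that $\mL_{\bS}(F)=\mL_{\hat{\Phi}}(F)$ via the clique decomposition $F=\sum_{\ell}F_\ell$.
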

\begin{proof}
For each $\ell\in[t]$, let us pick a point $\x^{i_{\ell}}(\ell)$, $i_{\ell}\in[s_{\ell}]$, and then define the point $\hat{\x}\in\R^n$ such that $\hat{\x}(\ell)\coloneqq(\hat{x})_{i\in\mI_\ell}=\x^{i_{\ell}}(\ell)$, $\ell\in[t]$. Because $\rank{M_{r}(\bS, \mI_{i}\cap\mI_{j})}=1$ for all pairs $\{i,j\}$ with $\mI_{i}\cap\mI_{j}\ne\emptyset$, the value of $\hat{x}_k$ is unique for any $k\in\mI_{i}\cap\mI_{j}\ne\emptyset$. Therefore, $\hat{\x}$ is well-defined and $\hat{\x}\in\K$.
We can thus construct $s=\prod_{j=1}^{\ell}s_{\ell}$ solutions $\{\x^j\}_{j=1}^{s}\subseteq\K$, 
each associated with the weight matrix $W_{j}\coloneqq\prod_{\ell=1}^t\xi_{i_{\ell}\ell}S_{\mathbf{0}}\in\mS^p_+$ if $\hat{\x}(\ell)=\x^{i_{\ell}}(\ell)$ for some $i_{\ell}\in[s_{\ell}]$. We then define the following matrix measure
$\Phi=\sum_{j=1}^{s}W_{j}\delta_{\x^{j}}$
which is supported on $\K$, and its marginal measure on $\K_{\ell}$ is $\Phi_{\ell}$ for each $\ell\in[t]$. 
Moreover, it holds that
\begin{equation}
\sum_{j=1}^sW_j=\prod_{\ell=1}^t\left(\sum_{i=1}^{s_{\ell}}\xi_{i\ell}\right)S_{\mathbf{0}}=S_{\mathbf{0}}.
\end{equation}
Therefore, we have
\begin{equation}
    \lambda^{\star}\ge\lambda_{r}^{(\cs)}=\mL_{\bS}(F)=\sum_{j=1}^{s}\left\langle W_{j}, F(\x^{j})\right\rangle\ge\sum_{j=1}^{s}\left\langle W_{j}, \lambda^{\star}I_p\right\rangle=\lambda^{\star}\left\langle \sum_{j=1}^{s}W_{j}, I_p\right\rangle=\lambda^{\star}\tr{S_{\mathbf{0}}}=\lambda^{\star},
\end{equation}
which implies $\lambda_{r}^{(\cs)}=\lambda^{\star}$ and each $\x^{j}$ is an optimal solution of \eqref{eq::mPMI}.
\end{proof}

\begin{exmp}
Consider the PMO problem
\begin{equation}\label{sec5:eq2}
\inf_{\x\in\R^3}\lambda_{\min}\left((-x_1^2 + x_2)(\bq_1\bq_1^{\intercal}+\bq_2\bq_2^{\intercal}) + (x_2^2 + x_3^2)\bq_3\bq_3^{\intercal}\right)\quad\mathrm{s.t.}\quad 1 - x_1^2 - x_2^2\ge 0,\, x_2^2 + x_3^2 = 1,
\end{equation}
where $\bq_1,\bq_2,\bq_3$ are the column vectors of the orthogonal matrix
\begin{equation}
Q = \begin{bmatrix} \frac{1}{\sqrt{2}}  & -\frac{1}{\sqrt{3}} &  \frac{1}{\sqrt{6}}\\
  0  &  \frac{1}{\sqrt{3}}  & \frac{2}{\sqrt{6}}\\
  \frac{1}{\sqrt{2}}  & \frac{1}{\sqrt{3}}  &  -\frac{1}{\sqrt{6}}\end{bmatrix}.
\end{equation}
Problem \eqref{sec5:eq2} exhibits a CSP: $\mI_1=\{1,2\}$ and $\mI_2=\{2,3\}$. Solving \eqref{eq::mPMI::csmom} with $r=2$, we obtain $\lambda_{2}^{(\cs)}=-1.250$ and $\rank{M_{2}(\bS, \mI_1)}=\rank{M_{1}(\bS, \mI_1)}=4$, $\rank{M_{2}(\bS, \mI_2)}=\rank{M_{1}(\bS, \mI_2)}=4$. Applying the extraction procedure in \cite{GW2023} to $M_{2}(\bS, \mI_1)$ and $M_{2}(\bS, \mI_2)$, respectively, we retrieve the matrix measures:
\begin{equation}
\Phi_1\approx W\left(\delta_{\x^{1}(1)}+\delta_{\x^{2}(1)}\right)\text{ and }\Phi_2\approx W\left(\delta_{\x^{1}(2)}+\delta_{\x^{2}(2)}\right)
\end{equation}
with
\begin{equation}
W\approx\begin{bmatrix}0.2083&-0.0833&0.0416\\-0.0833&0.0833&0.0833\\0.0416&0.0833&0.2083 \end{bmatrix}
\end{equation}
and
\begin{equation}
    \x^{1}(1)\approx\begin{bmatrix} 0.8660\\-0.4999\end{bmatrix},\x^{2}(1)\approx\begin{bmatrix}-0.8660\\-0.4999\end{bmatrix},\x^{1}(2)\approx\begin{bmatrix} -0.4999\\0.8660\end{bmatrix},\x^{2}(2)\approx\begin{bmatrix} -0.4999\\-0.8660\end{bmatrix}.
\end{equation}
We can merge $\Phi_1$ and $\Phi_2$ into
\begin{equation}
\Phi\approx W\left(\delta_{\hat{\x}^{1}}+\delta_{\hat{\x}^{2}}+\delta_{\hat{\x}^{3}}+\delta_{\hat{\x}^{4}}\right)
\end{equation}
with
\begin{equation}
    \hat{\x}^{1}\approx\begin{bmatrix} 0.8660\\-0.4999\\0.8660\end{bmatrix},\hat{\x}^{2}\approx\begin{bmatrix} 0.8660\\-0.4999\\-0.8660\end{bmatrix},\hat{\x}^{3}\approx\begin{bmatrix} -0.8660\\-0.4999\\0.8660\end{bmatrix},\hat{\x}^{4}\approx\begin{bmatrix} -0.8660\\-0.4999\\-0.8660\end{bmatrix}.
\end{equation}
Thus, by Theorem \eqref{sec5:thm1}, the optimum of \eqref{sec5:eq2} is $-1.250$ which is achieved at $\hat{\x}^{1}$, $\hat{\x}^{2}$, $\hat{\x}^{3}$, and $\hat{\x}^{4}$.
\end{exmp}

For each $\ell\in[t]$, let $d_{\K_{\ell}}\coloneqq\max_{j\in \mJ_{\ell}}\lceil\deg{G_j}/2\rceil$.
\begin{cor}
Let $\bS$ be an optimal solution  
of \eqref{eq::mPMI::csmom}. Assume that the following conditions hold:
\begin{enumerate}
    \item[\upshape (i)] $\rank{S_{\mathbf{0}}}=1$;
   \item[\upshape (ii)] For each $\ell\in[t]$, one has
   \begin{equation}\label{sec5:eq1}
       \rank{M_{r}(\bS, \mI_\ell)}=\rank{M_{r-d_{\K_{\ell}}}(\bS, \mI_\ell)}(\eqqcolon s_{\ell});
   \end{equation}
    \item[\upshape (iii)] For all pairs $\{i,j\}$ with $\mI_{i}\cap\mI_{j}\ne\emptyset$, one has
     $\rank{M_{r}(\bS, \mI_{i}\cap\mI_{j})}=1$.
\end{enumerate}
Then $\lambda_{r}^{(\cs)}=\lambda^{\star}$. 
Moreover, let $\Delta_{\ell}\coloneqq\{\x^{i}(\ell)\}_{i=1}^{s_{\ell}}$ be the set of points obtained by applying the extraction procedure in \cite{GW2023} to each moment matrix $M_{r}(\bS, \mI_\ell)$, $\ell\in[t]$, and 
let $S_{\mathbf{0}}=\bv\bv^{\intercal}$ for some $\bv\in\R^p$. 
Then each point $\hat{\x}\in\R^n$ satisfying $\hat{\x}(\ell)\coloneqq(\hat{x}_i)_{i\in\mI_\ell}=\x^{i_{\ell}}(\ell)$ for some $\x^{i_{\ell}}(\ell)\in\Delta_{\ell}$, $\ell\in[t]$, 
is an optimal solution of \eqref{eq::mPMI} and $\bv$ is the corresponding eigenvector.
\end{cor}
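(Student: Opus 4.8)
The plan is to deduce this corollary directly from Theorem \ref{sec5:thm1}, since its hypotheses are almost identical except that condition~(i) of the theorem (the existence of a representing measure of the special form $\Phi_\ell = S_{\mathbf 0}(\sum_i \xi_{i\ell}\delta_{\x^i(\ell)})$) has been replaced by the two more checkable conditions $\rank{S_{\mathbf 0}}=1$ and the clique-wise flatness \eqref{sec5:eq1}. Note that condition~(iii) of the corollary is verbatim condition~(ii) of Theorem \ref{sec5:thm1}, so the entire task reduces to showing that conditions~(i) and~(ii) of the corollary together imply condition~(i) of the theorem; once this is established, the equality $\lambda_r^{(\cs)}=\lambda^\star$ and the optimality of each admissible $\hat\x$ follow immediately.

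First I would fix $\ell\in[t]$ and apply the matrix flat extension theorem (Theorem \ref{th::FEC}) to the truncated subsequence $\bS^\ell=(S_\a)_{\supp{\a}\subseteq\mI_\ell}$ on the clique $\mI_\ell$ with the constraints $\{G_k : k\in\mJ_\ell\}$. The PSD requirements $M_r(\bS,\mI_\ell)\succeq 0$ and $M_{r-d_k}(G_k\bS,\mI_\ell)\succeq 0$ hold because $\bS$ is feasible for \eqref{eq::mPMI::csmom}, and the rank condition \eqref{sec5:eq1} is precisely the flatness hypothesis. Hence $\bS^\ell$ admits a finitely atomic representing measure $\Phi_\ell=\sum_i W_i^\ell\,\delta_{\x^i(\ell)}$ with $W_i^\ell\in\mS^p_+$, $\x^i(\ell)\in\K_\ell$, and $\sum_i\rank{W_i^\ell}=\rank{M_r(\bS,\mI_\ell)}=s_\ell$. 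The next step exploits $\rank{S_{\mathbf 0}}=1$: writing $S_{\mathbf 0}=\bv\bv^{\intercal}$ and using that the zero multi-index is supported on every clique, so that $\sum_i W_i^\ell=S_{\mathbf 0}$, any $\bw\perp\bv$ satisfies $0=\bw^{\intercal}S_{\mathbf 0}\bw=\sum_i\bw^{\intercal}W_i^\ell\bw$ with each summand nonnegative, forcing $W_i^\ell\bw=0$. Thus every $W_i^\ell$ has range inside $\mathrm{span}(\bv)$, i.e. $W_i^\ell=\xi_{i\ell}S_{\mathbf 0}$ with $\xi_{i\ell}\ge0$; discarding the zero weights leaves exactly $s_\ell$ atoms, and taking traces together with $\tr{S_{\mathbf 0}}=\mL_{\bS}(I_p)=1$ gives $\sum_i\xi_{i\ell}=1$. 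This is exactly the representing measure demanded by condition~(i) of Theorem \ref{sec5:thm1}, so the theorem applies and yields $\lambda_r^{(\cs)}=\lambda^\star$ together with the optimality of every $\hat\x$ whose clique projections lie in the extracted sets $\Delta_\ell$.

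It remains to identify $\bv$ as the eigenvector of $F(\hat\x)$ at the minimal eigenvalue. For this I would revisit the equality chain in the proof of Theorem \ref{sec5:thm1}, where $\lambda^\star=\sum_j\inp{W_j}{F(\x^j)}$ with $W_j=c_j\,S_{\mathbf 0}$ for some $c_j>0$. Since each $\x^j\in\K$ is a global minimizer, $F(\x^j)-\lambda^\star I_p\succeq 0$, and the inequality $\inp{W_j}{F(\x^j)}\ge\lambda^\star\inp{W_j}{I_p}$ must hold with equality, forcing $\tr{W_j(F(\x^j)-\lambda^\star I_p)}=0$. As both factors are PSD this yields $W_j(F(\x^j)-\lambda^\star I_p)=0$, so $\mathrm{range}(W_j)=\mathrm{span}(\bv)$ lies in $\ker(F(\x^j)-\lambda^\star I_p)$, i.e. $F(\hat\x)\bv=\lambda^\star\bv$.

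The main obstacle is the second step: correctly passing from the unstructured PSD weight matrices $W_i^\ell$ furnished by the flat extension theorem to the rank-one-scaled form $\xi_{i\ell}S_{\mathbf 0}$. The crux is the elementary but essential observation that a sum of PSD matrices equal to a rank-one PSD matrix forces each summand to be a nonnegative multiple of that rank-one matrix; verifying afterward that the atoms and weights match the hypotheses of Theorem \ref{sec5:thm1} (in particular the normalization $\sum_i\xi_{i\ell}=1$ and the count $s_\ell$) is where the care is needed.
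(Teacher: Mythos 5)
Your proof is correct and takes essentially the same route as the paper: apply Theorem \ref{th::FEC} cliquewise using the flatness condition \eqref{sec5:eq1}, use $\rank{S_{\mathbf{0}}}=1$ together with $\sum_i W_{i\ell}=S_{\mathbf{0}}$ to force each weight to be $\xi_{i\ell}S_{\mathbf{0}}$ with $\xi_{i\ell}>0$ and $\sum_i\xi_{i\ell}=1$, and then invoke Theorem \ref{sec5:thm1}. Your final paragraph actually supplies a detail the paper's proof leaves implicit (it ends at ``the conclusion follows from Theorem \ref{sec5:thm1}'' without addressing the eigenvector claim): your complementarity argument, extracting termwise equality in the chain of Theorem \ref{sec5:thm1} to get $W_j\left(F(\hat{\x}^{j})-\lambda^{\star}I_p\right)=0$ and hence $F(\hat{\x})\bv=\lambda^{\star}\bv$, is a correct and welcome completion.
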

\begin{proof}
By \eqref{sec5:eq1} and Theorem \ref{th::FEC}, for each $\ell\in[t]$, the sequence of matrices $\bS^{\ell}\coloneqq(S_{\a})_{|\a|\le 2r,\supp{\a}\subseteq\mI_\ell}$ admits an $s_{\ell}$-atomic matrix measure $\Phi_{\ell}$ supported on $\K_{\ell}$, $\ell\in[t]$ so that
\begin{equation}  \Phi_{\ell}=\sum_{i=1}^{s_{\ell}}W_{i\ell}\delta_{\x^{i}(\ell)} \text{ for some } W_{i\ell}\in\mS^p_+.
\end{equation}
Note that $\sum_{i=1}^{s_{\ell}}W_{i\ell} = S_{\mathbf{0}}$ for each $\ell\in[t]$.
The fact that $\rank{S_{\mathbf{0}}}=1$ together with $W_{i\ell}\succeq0$ implies that $\rank{W_{i\ell}}=1$ for all $i,\ell$, from which we deduce that there exists $\xi_{i\ell}>0$ such that $W_{i\ell}=\xi_{i\ell}S_{\mathbf{0}}$ for all $i,\ell$ and $\sum_{i=1}^{s_{\ell}}\xi_{i\ell} = 1,\ell\in[t]$. 
Then the conclusion follows from Theorem \ref{sec5:thm1}.
\end{proof}

\begin{thm}\label{sec5:thm3}
Let $\bS$ be an optimal solution  
of \eqref{eq::mPMI::csmom}. Assume that the following conditions hold:
\begin{enumerate}
    \item[\upshape (i)] There exist weight matrices $W_1,\ldots,W_s\in\mS^p_+$ such that for each $\ell\in[t]$, 
$\bS^{\ell}\coloneqq(S_{\a})_{\supp{\a}\subseteq\mI_\ell}$ admits a representing matrix measure 
$\Phi_{\ell}=\sum_{i=1}^{s}W_{i}\delta_{\x^{i}(\ell)}$ for some points $\x^{1}(\ell),\ldots,\x^{s}(\ell)\subseteq\K_{\ell}$;
    \item[\upshape (ii)] For each $i\in[s]$, the points $\x^{i}(1),\ldots,\x^{i}(t)$ can be merged into a single point $\hat{\x}^{i}\in\R^n$ such that $\hat{\x}^{i}(\ell)=\x^{i}(\ell)$ for every $\ell\in[t]$.
\end{enumerate}
Then $\lambda_{r}^{(\cs)}=\lambda^{\star}$. Moreover, each point $\hat{\x}^{i},i\in[s]$ is an optimal solution of \eqref{eq::mPMI}.
\end{thm}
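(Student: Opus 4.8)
The plan is to mirror the argument of Theorem~\ref{sec5:thm1}: build a single PSD matrix-valued atomic measure on $\K$ out of the clique-wise measures $\Phi_\ell$, show that it reproduces $\mL_{\bS}(F)$, and then run the same chain of inequalities to squeeze $\lambda_r^{(\cs)}$ between two copies of $\lambda^{\star}$. The essential simplification here is that the atomic structure, the common weights $W_1,\dots,W_s$, and the merging into global points $\hat{\x}^i$ are supplied directly by hypotheses (i)--(ii), so no flatness or rank analysis is needed to produce them.

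First I would record the normalization. Since $\Phi_\ell$ represents $\bS^\ell$, matching the $\mathbf{0}$-th moment gives $\sum_{i=1}^s W_i = S_{\mathbf{0}}$ for every $\ell\in[t]$; the feasibility constraint $\mL_{\bS}(I_p)=1$ then yields $\tr{\sum_{i=1}^s W_i}=\tr{S_{\mathbf{0}}}=1$. Next I would check that each merged point lies in $\K$: for $k\in\mJ_\ell$ one has $\var(G_k)\subseteq\mI_\ell$, so $G_k(\hat{\x}^i)=G_k(\x^i(\ell))\succeq0$ because $\x^i(\ell)\in\K_\ell$; since $\bigcup_\ell\mJ_\ell=[m]$, this gives $G_k(\hat{\x}^i)\succeq0$ for all $k\in[m]$, i.e. $\hat{\x}^i\in\K$.

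The crux is the identity $\mL_{\bS}(F)=\sum_{i=1}^s\langle W_i, F(\hat{\x}^i)\rangle$. To establish it I would invoke the CSP construction: every $\a\in\supp{F}$ has $\supp{\a}$ forming a clique, hence contained in some maximal clique, so $F$ admits a decomposition $F=\sum_{\ell=1}^t F_\ell$ with $F_\ell\in\mS^p[\x(\mI_\ell)]$. Because $F_\ell$ involves only the variables indexed by $\mI_\ell$, its moments depend solely on $\bS^\ell$, whence $\mL_{\bS}(F_\ell)=\mL_{\Phi_\ell}(F_\ell)=\sum_{i=1}^s\langle W_i, F_\ell(\x^i(\ell))\rangle$. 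Using $\hat{\x}^i(\ell)=\x^i(\ell)$ and the fact that $F_\ell$ reads only the $\mI_\ell$-coordinates, $F_\ell(\x^i(\ell))=F_\ell(\hat{\x}^i)$; summing over $\ell$ and regrouping gives $\mL_{\bS}(F)=\sum_i\langle W_i, \sum_\ell F_\ell(\hat{\x}^i)\rangle=\sum_i\langle W_i, F(\hat{\x}^i)\rangle$. I expect this step to be the main obstacle, since it is where correlative sparsity is genuinely used: one must verify that the clique decomposition of $F$ is consistent and that the merged evaluation reassembles $F(\hat{\x}^i)$ correctly, with condition (ii) guaranteeing that the coordinates agree on clique intersections.

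Finally I would close with the inequality chain exactly as in Theorem~\ref{sec5:thm1}. Since $\hat{\x}^i\in\K$ we have $F(\hat{\x}^i)\succeq\lambda^{\star}I_p$, so
\begin{equation*}
\lambda^{\star}\ge\lambda_r^{(\cs)}=\mL_{\bS}(F)=\sum_{i=1}^s\langle W_i, F(\hat{\x}^i)\rangle\ge\sum_{i=1}^s\langle W_i, \lambda^{\star}I_p\rangle=\lambda^{\star}\left\langle\sum_{i=1}^s W_i, I_p\right\rangle=\lambda^{\star}\tr{S_{\mathbf{0}}}=\lambda^{\star}.
\end{equation*}
Equality throughout forces each $\langle W_i, F(\hat{\x}^i)-\lambda^{\star}I_p\rangle=0$; since $W_i\succeq0$ and $F(\hat{\x}^i)-\lambda^{\star}I_p\succeq0$, the range of $W_i$ lies in $\ker(F(\hat{\x}^i)-\lambda^{\star}I_p)$, so whenever $W_i\ne0$ the value $\lambda^{\star}$ is attained as $\lambda_{\min}(F(\hat{\x}^i))$, proving that $\hat{\x}^i$ is an optimal solution of \eqref{eq::mPMI}.
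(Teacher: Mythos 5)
Your proposal is correct and follows essentially the same route as the paper's proof: the paper simply defines the merged measure $\Phi=\sum_{i=1}^{s}W_{i}\delta_{\hat{\x}^{i}}$ supported on $\K$ with marginals $\Phi_{\ell}$ and runs the identical inequality chain, while you fill in the details it leaves implicit (the clique decomposition $F=\sum_{\ell}F_{\ell}$ justifying $\mL_{\bS}(F)=\sum_{i}\langle W_{i},F(\hat{\x}^{i})\rangle$, the verification that $\hat{\x}^{i}\in\K$ via $\var(G_k)\subseteq\mI_\ell$, and the complementarity argument that whenever $W_i\neq 0$ the point $\hat{\x}^{i}$ attains $\lambda^{\star}$). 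These additions are correct and, if anything, make the argument more rigorous than the paper's one-line version.
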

\begin{proof}
Let us define the matrix measure $\Phi=\sum_{i=1}^{s}W_{i}\delta_{\hat{\x}^{i}}$
which is supported on $\K$, and its marginal measure on $\K_{\ell}$ is $\Phi_{\ell}$ for each $\ell\in[t]$. Then,
\begin{equation}
    \lambda^{\star}\ge\lambda_{r}^{(\cs)}=\mL_{\bS}(F)=\sum_{i=1}^{s}\left\langle W_{i}, F(\hat{\x}^{i})\right\rangle\ge\sum_{i=1}^{s}\left\langle W_{i}, \lambda^{\star}I_p\right\rangle=\lambda^{\star}\left\langle \sum_{i=1}^{s}W_{i}, I_p\right\rangle=\lambda^{\star}\tr{S_{\mathbf{0}}}=\lambda^{\star},
\end{equation}
which implies $\lambda_{r}^{(\cs)}=\lambda^{\star}$ and each $\hat{\x}^{i}$ is an optimal solution of \eqref{eq::mPMI}.
\end{proof}

\begin{exmp}
Consider the PMO problem
\begin{equation}\label{cs:ex2}
\inf_{\x\in\R^3}\lambda_{\min}(F(\x))\quad\mathrm{s.t.}\quad 1 - x_1^2 - x_2^2\ge 0,\, 1 - x_2^2 - x_3^2\ge 0
\end{equation}
with
\begin{equation}
F(\x) = \begin{bmatrix}x_1^2 + x_2^2 &2 + x_1x_2 + x_3^2\\ 
2 + x_1x_2 + x_3^2& x_2x_3\end{bmatrix},
\end{equation}
which exhibits a CSP: $\mI_1=\{1,2\}$ and $\mI_2=\{2,3\}$. Solving \eqref{eq::mPMI::csmom} with $r=2$, we obtain $\lambda_{2}^{(\cs)}\approx-3.0643$ and $\rank{M_{2}(\bS, \mI_1)}=\rank{M_{1}(\bS, \mI_1)}=2$, $\rank{M_{2}(\bS, \mI_2)}=\rank{M_{1}(\bS, \mI_2)}=2$. Applying the extraction procedure in \cite{GW2023} to $M_{2}(\bS, \mI_1)$ and $M_{2}(\bS, \mI_2)$, respectively, we retrieve the matrix measures:
\begin{equation}
    \Phi_1\approx\begin{bmatrix}0.2338&-0.2494\\-0.2494&0.2661 \end{bmatrix}\delta_{\x^{1}(1)}+\begin{bmatrix}0.2338&-0.2494\\-0.2494&0.2661 \end{bmatrix}\delta_{\x^{2}(1)},
\end{equation}
\begin{equation}
    \Phi_2\approx\begin{bmatrix}0.2338&-0.2494\\-0.2494&0.2661 \end{bmatrix}\delta_{\x^{1}(2)}+\begin{bmatrix}0.2338&-0.2494\\-0.2494&0.2661 \end{bmatrix}\delta_{\x^{2}(2)}
\end{equation}
with
\begin{equation}
    \x^{1}(1)\approx\begin{bmatrix} 0.2732\\0.2561\end{bmatrix},\x^{2}(1)\approx\begin{bmatrix} -0.2732\\-0.2561\end{bmatrix},\x^{1}(2)\approx\begin{bmatrix} 0.2561\\-0.9666\end{bmatrix},\x^{2}(2)\approx\begin{bmatrix} -0.2561\\0.9666\end{bmatrix}.
\end{equation}
We can merge $\Phi_1$ and $\Phi_2$ into
\begin{equation}
\Phi\approx\begin{bmatrix}0.2338&-0.2494\\-0.2494&0.2661 \end{bmatrix}\delta_{\hat{\x}^{1}}+\begin{bmatrix}0.2338&-0.2494\\-0.2494&0.2661 \end{bmatrix}\delta_{\hat{\x}^{2}}
\end{equation}
with
\begin{equation}
    \hat{\x}^{1}\approx\begin{bmatrix} 0.2732\\0.2561\\-0.9666\end{bmatrix},\hat{\x}^{2}\approx\begin{bmatrix} -0.2732\\-0.2561\\0.9666\end{bmatrix}.
\end{equation}
Thus, by Theorem \eqref{sec5:thm3}, the optimum of \eqref{cs:ex2} is $-3.0643$ which is achieved at $\hat{\x}^{1}$ and $\hat{\x}^{2}$.
\end{exmp}

\begin{cor}
Let $\bS$ be an optimal solution  
of \eqref{eq::mPMI::csmom}.
If $\rank{M_{r_{\min}}(\bS, \mI_\ell)}=1$ for each $\ell\in[t]$, then $\lambda_{r}^{(\cs)}=\lambda^{\star}$. Moreover, one can recover an optimal solution of \eqref{eq::mPMI}.
\end{cor}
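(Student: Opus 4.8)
The plan is to show that the rank-one hypothesis at the minimal order pins down, for each clique, a \emph{single} atom carrying the full rank-one matrix weight $S_{\mathbf{0}}$, and then to conclude exactly as in the proof of Theorem \ref{sec5:thm3} (equivalently, via the preceding corollary with $s=1$). First I would record that $S_{\mathbf{0}}$ is the top-left block of the rank-one PSD matrix $M_{r_{\min}}(\bS, \mI_\ell)$; since $\tr{S_{\mathbf{0}}}=\mL_{\bS}(I_p)=1>0$ while $\rank{S_{\mathbf{0}}}\le\rank{M_{r_{\min}}(\bS, \mI_\ell)}=1$, we obtain $\rank{S_{\mathbf{0}}}=1$, say $S_{\mathbf{0}}=\bv\bv^{\intercal}$. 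Writing the rank-one matrix as an outer product and reading off the block-moment (Hankel) structure, one extracts a point $\x(\ell)$ with $S_{\a}=\x(\ell)^{\a}S_{\mathbf{0}}$ for every $\a$ with $\supp{\a}\subseteq\mI_\ell$ and $\abs{\a}\le 2r_{\min}$. Equivalently, the trivial flatness $\rank{M_{r_{\min}}(\bS, \mI_\ell)}=\rank{M_{r_{\min}-d_{\K_\ell}}(\bS, \mI_\ell)}=1$ lets Theorem \ref{th::FEC}, applied on the clique, furnish the single-atom representing measure $\Phi_\ell=S_{\mathbf{0}}\,\delta_{\x(\ell)}$ with $\x(\ell)\in\K_\ell$.

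Next I would glue the clique atoms into one global point. For any overlapping pair, $M_{r_{\min}}(\bS, \mI_i\cap\mI_j)$ is a principal submatrix of the rank-one $M_{r_{\min}}(\bS, \mI_i)$ that still contains $S_{\mathbf{0}}\ne 0$, hence has rank one; reading off $S_{\mathbf{e}_k}=x_k(i)S_{\mathbf{0}}=x_k(j)S_{\mathbf{0}}$ for each shared coordinate $k\in\mI_i\cap\mI_j$ and using $S_{\mathbf{0}}\ne 0$ gives $x_k(i)=x_k(j)$. Thus the atoms agree on overlaps and glue into a single $\hat{\x}\in\R^n$ with $\hat{\x}(\ell)=\x(\ell)$ for all $\ell$. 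Since each $G_k$ with $k\in\mJ_\ell$ involves only the variables of $\mI_\ell$ and $\x(\ell)\in\K_\ell$, we get $G_k(\hat{\x})\succeq 0$ for every $k$, i.e. $\hat{\x}\in\K$. Because the correlative sparsity pattern forces every $\a\in\supp{F}$ to have $\supp{\a}$ contained in a single maximal clique, we have $S_{\a}=\hat{\x}^{\a}S_{\mathbf{0}}$ for all such $\a$, and therefore $\mL_{\bS}(F)=\sum_{\a}\tr{F_{\a}\hat{\x}^{\a}S_{\mathbf{0}}}=\langle S_{\mathbf{0}}, F(\hat{\x})\rangle$.

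The conclusion then follows from the chain $\lambda^{\star}\ge\lambda^{(\cs)}_{r}=\mL_{\bS}(F)=\langle S_{\mathbf{0}}, F(\hat{\x})\rangle\ge\langle S_{\mathbf{0}},\lambda^{\star}I_p\rangle=\lambda^{\star}\tr{S_{\mathbf{0}}}=\lambda^{\star}$, where $F(\hat{\x})\succeq\lambda^{\star}I_p$ holds since $\hat{\x}\in\K$ and $S_{\mathbf{0}}\succeq 0$. This forces $\lambda^{(\cs)}_{r}=\lambda^{\star}$; moreover equality in $\langle\bv\bv^{\intercal}, F(\hat{\x})-\lambda^{\star}I_p\rangle=0$ yields $\bv^{\intercal}\bigl(F(\hat{\x})-\lambda^{\star}I_p\bigr)\bv=0$, so $\hat{\x}$ is a global minimizer of \eqref{eq::mPMI} with associated eigenvector $\bv$.

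The one delicate point I expect to be the main obstacle is the mismatch between the order-$r_{\min}$ hypothesis and the order-$r$ data carried by the optimal $\bS$ of \eqref{eq::mPMI::csmom}. It is resolved precisely by the observation that $\deg{F}\le 2r_{\min}$: only the moments $S_{\a}$ with $\abs{\a}\le 2r_{\min}$—exactly those controlled by the rank-one condition—enter the evaluation of $\mL_{\bS}(F)$, so the single-atom measures obtained at order $r_{\min}$ already suffice to run the argument of Theorem \ref{sec5:thm3} with $s=1$ and common weight $W_1=S_{\mathbf{0}}$. Alternatively, one shows rank one persists up to order $r$ (any vector in $\ker M_{r_{\min}}(\bS,\mI_\ell)$ lies in $\ker M_{r}(\bS,\mI_\ell)$ because $M_r(\bS,\mI_\ell)\succeq 0$, and closing the degree-one generators $\widehat{(x_i-x_i(\ell))\mathbf{e}_k}$ under multiplication by the clique variables, together with $\rank{S_{\mathbf{0}}}=1$, leaves a codimension-one kernel), which verifies hypotheses (ii)–(iii) of the preceding corollary at order $r$ verbatim.
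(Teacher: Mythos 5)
Your proposal is correct and follows essentially the same route as the paper's proof: rank-one moment matrices at order $r_{\min}$ give, via the flatness result (Theorem~\ref{th::FEC}), a Dirac matrix measure $S_{\mathbf{0}}\delta_{\x(\ell)}$ on each clique, the atoms are glued using the first-order moments $S_{\be_i}$, and the conclusion follows from the inequality chain of Theorem~\ref{sec5:thm3} with a single atom of weight $S_{\mathbf{0}}$. Your extra care about the mismatch between the order-$r_{\min}$ hypothesis and the order-$r$ sequence (resolved since $\deg F\le 2r_{\min}$) is a welcome clarification of a point the paper leaves implicit, but it does not change the argument.
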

\begin{proof}
Since $\rank{M_{r_{\min}}(\bS, \mI_\ell)}=1$, by \cite[Theorem 5]{GW2023}, each sequence $\bS^{\ell}\coloneqq(S_{\a})_{|\a|\le 2r_{\min},\supp{\a}\subseteq\mI_\ell}$ admits a Dirac representing measure $\Phi_{\ell}=S_{\mathbf{0}}\delta_{\x(\ell)}$ 
with $\x(\ell)\in\K_{\ell}$.
Moreover, we can merge $\x(1),\ldots,\x(t)$ into a point $\hat{\x}\in\R^n$ by letting
\[
\hat{\x}(\mI_{\ell})\coloneqq(\hat{x}_i)_{i\in\mI_{\ell}}=\x(\ell),\quad \forall\ell\in[t].
\]
There is no ambiguity for $\hat{x}_i$ when $i\in\mI_j\cap\mI_k\neq\emptyset$ for some $j, k\in[t]$.
In fact, denoting by $\be_i\in\N^n$ the vector whose $i$-th entry being $1$ and the others 
being $0$, we have $S_{\be_i}=S_{\mathbf{0}}\x(j)_i=S_{\mathbf{0}}\x(k)_i$, which implies $\x(j)_i=\x(k)_i$. 
Then, the conclusion follows from Theorem \ref{sec5:thm3}.
\end{proof}

\rev{\section{Matrix Sparsity}\label{cms}
In this section, we briefly introduce how to exploit matrix sparsity (i.e., chordal sparsity pattern of objective or constraint polynomial matrices) to perform reductions on the SDP size for PMO \eqref{eq::mPMI}.
The cases for objective polynomial matrices and for constraint polynomial matrices will be addressed sequentially.

\subsection{Objective matrix sparsity}
To exploit chordal sparsity encoded in the objective matrix, we adapt Theorem 2.4, Theorem 2.5, and Corollary 2.2 in \cite{zheng2023sum} dealing with scalar polynomial constraints to the case of PMI constraints. For the proofs, one only needs to replace the usual inner product $\langle\cdot,\cdot\rangle$ in the corresponding proofs in \cite{zheng2023sum} with the product $\langle\cdot,\cdot\rangle_p$. 
We thus omit the details.

\begin{thm}\label{sec6:thm1}
Let $\mathbf{K}$ be the semi-algebraic set defined in \eqref{eq:bsa_set} and suppose that $\Sigma^p[\bG]$ is Archimedean.
Let $F(\x)$ be a polynomial matrix whose sparsity
graph is chordal and has maximal cliques $\mC_1,\ldots,\mC_t$. If $F(\x)$ is strictly positive definite on $\mathbf{K}$, then there exist SOS matrices $S_{k,i}(\x)$ of size $q_k|\mC_i| \times q_k|\mC_i|$ such that
\begin{equation*}
F(\x)=\sum_{i=1}^tE_{\mC_i}^{\intercal}\left(S_{0,i}(\x)+\sum_{k=1}^m\left\langle S_{k,i}(\x),G_k(\x)\right\rangle_{p}\right)E_{\mC_i}.    
\end{equation*}
\end{thm}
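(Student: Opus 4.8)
The plan is to mirror the two-stage argument of \cite{zheng2023sum}, replacing the scalar inner product throughout by the $p$-product $\langle\cdot,\cdot\rangle_p$ so that the matrix constraints $G_k(\x)\succeq 0$ are accommodated. The first stage splits the chordal-sparse objective $F$ into clique-indexed polynomial blocks that remain strictly positive definite on $\mathbf{K}$; the second stage applies Scherer and Hol's matrix Positivstellensatz to each block and reassembles.

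First I would record that, since $\Sigma^p[\bG]$ is Archimedean, the set $\mathbf{K}$ is compact, so strict positive definiteness of $F$ on $\mathbf{K}$ furnishes a uniform $\epsilon>0$ with $F(\x)\succeq\epsilon I_p$ for all $\x\in\mathbf{K}$. Because $I_p$ is diagonal and hence sparse with respect to the chordal sparsity graph $\mG$ of $F$, the shifted matrix $F(\x)-\epsilon I_p$ retains the same chordal pattern and is PSD on $\mathbf{K}$. Invoking the clique decomposition of the sparse PSD cone (Theorem \ref{th::spositive2}) together with its polynomial realization from \cite{zheng2023sum}, I would obtain polynomial matrices $B_i(\x)$ of size $|\mC_i|\times|\mC_i|$ with $B_i(\x)\succeq 0$ on $\mathbf{K}$ and $F(\x)-\epsilon I_p=\sum_{i=1}^t E_{\mC_i}^{\intercal}B_i(\x)E_{\mC_i}$. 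Splitting the identity as $I_p=\sum_{i=1}^t E_{\mC_i}^{\intercal}D_i E_{\mC_i}$ with constant diagonal $D_i\succ 0$ (weighting each diagonal index by the reciprocal of the number of maximal cliques containing it, which is licit because the cliques cover all diagonal indices) then yields $F(\x)=\sum_{i=1}^t E_{\mC_i}^{\intercal}F_i(\x)E_{\mC_i}$ with $F_i\coloneqq B_i+\epsilon D_i$. Since $D_i$ is constant and positive definite while $B_i\succeq 0$ on $\mathbf{K}$, each $F_i(\x)$ is strictly positive definite on $\mathbf{K}$, uniformly.

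For the second stage I would note that the Archimedean condition is intrinsic to $\Sigma^1[\bG]$ and therefore holds verbatim for the quadratic module $\Sigma^{|\mC_i|}[\bG]$ attached to each clique. Applying Scherer and Hol's Positivstellensatz \cite[Corollary 1]{scherer2006matrix} to each $F_i(\x)\succ 0$ on $\mathbf{K}$ then produces SOS matrices $S_{0,i}$ of size $|\mC_i|\times|\mC_i|$ and $S_{k,i}$ of size $q_k|\mC_i|\times q_k|\mC_i|$ with $F_i=S_{0,i}+\sum_{k=1}^m\langle S_{k,i},G_k\rangle_{|\mC_i|}$, the product now being taken of the appropriate clique size. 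Substituting these representations into the clique decomposition of $F$ obtained above delivers the claimed identity.

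The main obstacle is the first stage: realizing the pointwise Agler-type decomposition of $F-\epsilon I_p$ with genuinely polynomial clique blocks $B_i(\x)$ while preserving positive semidefiniteness on all of $\mathbf{K}$, rather than merely splitting entries along cliques (which need not yield PSD blocks). This is exactly the content imported from \cite[Theorems 2.4 and 2.5]{zheng2023sum}, so the only genuinely new bookkeeping is carrying the $p$-product and the attendant block sizes $q_k|\mC_i|$ through the argument, together with the observation that the constant shift by $\epsilon D_i$ upgrades each semidefinite block to a uniformly definite one so that the matrix Positivstellensatz applies.
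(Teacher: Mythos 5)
Your proposal is correct and takes essentially the same route as the paper, whose proof is precisely the adaptation you describe: rerun the arguments behind Theorems 2.4--2.5 of \cite{zheng2023sum} (polynomial clique decomposition on the compact set $\mathbf{K}$, then \cite[Corollary 1]{scherer2006matrix} applied clique-wise) with the scalar product replaced by $\langle\cdot,\cdot\rangle_p$, noting as you do that Archimedeanity lives in $\Sigma^1[\bG]$ and so transfers to every clique dimension. One minor bookkeeping point: the polynomial realization you import from \cite{zheng2023sum} is established for matrices \emph{strictly} positive definite on $\mathbf{K}$, whereas $F(\x)-\epsilon I_p$ is only guaranteed PSD there, so you should decompose $F(\x)-\tfrac{\epsilon}{2}I_p\succeq\tfrac{\epsilon}{2}I_p$ instead (your identity splitting $I_p=\sum_{i=1}^t E_{\mC_i}^{\intercal}D_iE_{\mC_i}$ and the $\epsilon D_i$ shift then work verbatim).
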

%

\begin{thm}\label{sec6:thm2}
Let $\mathbf{K}$ be a semi-algebraic set defined in \eqref{eq:bsa_set} with homogeneous polynomial matrices $G_1,\ldots,G_m$ of even degree and such that $\mathbf{K}\setminus\{\mathbf{0}\}$ is nonempty. Let $F(\x)$ be a homogeneous polynomial matrix of even degree whose sparsity graph is chordal and has maximal cliques $\mC_1,\ldots,\mC_t$. If $F(\x)$ is strictly positive definite on $\mathbf{K}\setminus\{\mathbf{0}\}$, then there exists an integer $\tau\ge0$ and homogeneous SOS matrices $S_{k,i}(\x)$ of size $q_k|\mC_i| \times q_k|\mC_i|$ such that
\begin{equation*}
\|\x\|^{2\tau}F(\x)=\sum_{i=1}^tE_{\mC_i}^{\intercal}\left(S_{0,i}(\x)+\sum_{k=1}^m\left\langle S_{k,i}(\x),G_k(\x)\right\rangle_{p}\right)E_{\mC_i}.    
\end{equation*}
\end{thm}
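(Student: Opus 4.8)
The plan is to follow the homogeneous scalar argument of \cite[Theorem~2.5]{zheng2023sum}, replacing the ordinary inner product throughout by the $p$-product $\langle\cdot,\cdot\rangle_p$, and to combine a strictly positive \emph{chordal decomposition} of $F$ with a clique-wise homogeneous matrix Positivstellensatz. Write $2\delta\coloneqq\deg F$. First I would reduce to a compact setting: since $F$ is homogeneous of even degree and strictly positive definite on the cone $\mathbf{K}\setminus\{\mathbf{0}\}$, the compactness of $\mathbf{K}\cap\mathbb{S}^{n-1}$ gives some $\epsilon>0$ with $F\succeq\epsilon I_p$ on $\mathbf{K}\cap\mathbb{S}^{n-1}$, whence $F(\x)\succeq\epsilon\|\x\|^{2\delta}I_p$ for all $\x\in\mathbf{K}$ by homogeneity.

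Next I would split $F$ into clique-supported pieces that are individually positive definite. Introducing the homogeneous ``clique-identity'' matrix $\Theta(\x)\coloneqq\|\x\|^{2\delta}\sum_{i=1}^t E_{\mC_i}^{\intercal}I_{|\mC_i|}E_{\mC_i}$, whose sparsity pattern is contained in that of $F$ and which satisfies $\Theta\succ0$ on $\mathbf{K}\setminus\{\mathbf{0}\}$, the matrix $F-c\,\Theta$ is PSD on $\mathbf{K}\setminus\{\mathbf{0}\}$ and carries the chordal pattern $\mG$ for a suitably small $c>0$. Applying the homogeneous refinement of the chordal decomposition in Theorem~\ref{th::spositive2} to $F-c\,\Theta$ produces homogeneous PSD clique blocks $B_i$ of degree $2\delta$, and setting $F_i\coloneqq B_i+c\,\|\x\|^{2\delta}I_{|\mC_i|}$ yields
\[
F=\sum_{i=1}^t E_{\mC_i}^{\intercal}F_i\,E_{\mC_i},\qquad F_i\ \text{homogeneous and}\ \succ0\ \text{on}\ \mathbf{K}\setminus\{\mathbf{0}\}.
\]

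I would then certify each clique piece separately. Each $F_i$ is homogeneous of even degree and strictly positive definite on $\mathbf{K}\setminus\{\mathbf{0}\}$, so the homogeneous matrix Positivstellensatz supplies an integer $\tau_i\ge0$ and homogeneous SOS matrices $S_{0,i},S_{k,i}$ of sizes $|\mC_i|\times|\mC_i|$ and $q_k|\mC_i|\times q_k|\mC_i|$ with
\[
\|\x\|^{2\tau_i}F_i=S_{0,i}+\sum_{k=1}^m\langle S_{k,i},G_k\rangle_p.
\]
This homogeneous statement is itself obtained by restricting $F_i$ to the sphere, invoking Scherer and Hol's Positivstellensatz \cite{scherer2006matrix} for the Archimedean quadratic module generated by $\{G_k\}$ together with the ball constraint $1-\|\x\|_2^2$, and homogenizing: the term carrying $1-\|\x\|_2^2$ vanishes on the sphere while homogenization balances degrees through the factor $\|\x\|^{2\tau_i}$, exactly as in the scalar case (this is where $\langle\cdot,\cdot\rangle$ is replaced by $\langle\cdot,\cdot\rangle_p$ to accommodate matrix, rather than scalar, constraints $G_k$). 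Finally, putting $\tau\coloneqq\max_i\tau_i$ and multiplying the $i$-th identity by $\|\x\|^{2(\tau-\tau_i)}$ --- which preserves the required form since $\|\x\|^2$ times an SOS matrix is SOS and $\|\x\|^2\langle S_{k,i},G_k\rangle_p=\langle\|\x\|^2 S_{k,i},G_k\rangle_p$ --- I would conjugate by $E_{\mC_i}$ and sum over $i$ to reach the claimed identity for $\|\x\|^{2\tau}F$.

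The main obstacle I anticipate is the strictly positive homogeneous chordal decomposition of the second step: Theorem~\ref{th::spositive2} delivers only a \emph{pointwise} PSD clique decomposition of a sparse PSD matrix, so the work lies in showing that the blocks $B_i$ can be chosen to depend polynomially (and homogeneously) on $\x$ while the perturbation by $\Theta$ makes each $F_i$ strictly positive definite. A secondary technical point is the bookkeeping in the homogeneous Positivstellensatz of the third step, where one must verify that the $p$-product certificates survive restriction to the sphere and re-homogenization with a uniform multiplier degree $\tau$.
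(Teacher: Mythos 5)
Your overall architecture --- compactify via homogeneity, split $F$ into clique-supported strictly positive definite pieces, certify each piece with a clique-wise homogeneous Positivstellensatz in which the scalar inner product is replaced by $\langle\cdot,\cdot\rangle_p$, then equalize the powers of $\|\x\|^2$ --- is exactly the route the paper takes (its proof defers to Theorem 2.5 of \cite{zheng2023sum}, changing only $\langle\cdot,\cdot\rangle$ to $\langle\cdot,\cdot\rangle_p$). Your Steps 1, 3 and 4 are sound: the reduction to $\K\cap\mathbb{S}^{n-1}$, the vanishing of the $(1-\|\x\|_2^2)$-multiplier under the substitution $\x\mapsto\x/\|\x\|$, the role of the even-degree hypotheses in keeping the homogenized certificates polynomial, and the fact that $\|\x\|^2$ times an SOS matrix is again SOS all carry over from the scalar-constraint case with Scherer and Hol's Positivstellensatz \cite{scherer2006matrix} applied clique by clique.

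The genuine gap is Step 2, and it is the load-bearing step. There is no ``homogeneous refinement of Theorem \ref{th::spositive2}'' to invoke: Theorem \ref{th::spositive2} is a statement about constant matrices, so applied to $F(\x)-c\,\Theta(\x)$ it produces, for each fixed $\x$, PSD blocks $B_i(\x)$ with no control whatsoever over their dependence on $\x$; nothing guarantees the $B_i$ can be chosen polynomial, let alone homogeneous of degree $2\delta$. Your arrangement actually makes this harder: you subtract $c\,\Theta$ first and then ask for a polynomial PSD clique decomposition of the merely positive \emph{semi}definite remainder, but the strictness margin is precisely what makes such a decomposition possible. In \cite{zheng2023sum} this step is a theorem in its own right: one restricts to the compact set $\K\cap\mathbb{S}^{n-1}$, builds a continuous strictly positive definite clique decomposition of $F$ itself via Schur complements along a clique tree (using the running intersection property of the maximal cliques of the chordal pattern), approximates the continuous blocks by polynomials via Stone--Weierstrass --- strict definiteness absorbs the approximation error, and exactness of the sum is preserved by defining the last block in each two-clique split as a difference --- and only afterwards evenizes and homogenizes to obtain homogeneous blocks, at the price of the multiplier $\|\x\|^{2\tau}$. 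Until you supply (or cite) this polynomial-decomposition lemma, the proof does not go through; as you yourself note, this is ``where the work lies,'' and it is not a technicality but the heart of the theorem --- the counterexamples in \cite{zheng2023sum} show that without the compactness/strictness mechanism (equivalently, without the multiplier) such sparse decompositions can fail outright.
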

%

\begin{cor}\label{sec6:thm3}
Let $\mathbf{K}$ be a semi-algebraic set defined in \eqref{eq:bsa_set}, and let $F(\x)$ be an inhomogeneous polynomial matrix of even degree whose sparsity graph is chordal and has maximal cliques $\mC_1,\ldots,\mC_t$. If $F(\x)$ is strictly positive definite on $\mathbf{K}$ and its highest-degree homogeneous part is strictly positive definite on $\R^n\setminus\{\mathbf{0}\}$, then there exists an integer $\tau\ge0$ and SOS matrices $S_{k,i}(\x)$ of size $q_k|\mC_i| \times q_k|\mC_i|$ such that
\begin{equation*}
(1+\|\x\|)^{2\tau}F(\x)=\sum_{i=1}^tE_{\mC_i}^{\intercal}\left(S_{0,i}(\x)+\sum_{k=1}^m\left\langle S_{k,i}(\x),G_k(\x)\right\rangle_{p}\right)E_{\mC_i}.    
\end{equation*}
\end{cor}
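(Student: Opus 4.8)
The plan is to deduce this inhomogeneous statement from the homogeneous Theorem~\ref{sec6:thm2} by the classical homogenization device: adjoin a fresh variable $x_0$, apply the homogeneous result, and then specialize $x_0=1$. Write $2d\coloneqq\deg F$ and $\bar{\x}\coloneqq(x_0,x_1,\ldots,x_n)\in\R^{n+1}$. I would homogenize the objective entrywise by $\hat{F}(\bar{\x})\coloneqq x_0^{2d}F(\x/x_0)$, which multiplies the degree-$j$ part of each entry $F_{ij}$ by $x_0^{2d-j}$ and so produces a homogeneous polynomial matrix of (even) degree $2d$; likewise each constraint is homogenized to even degree $2d_k$ via $\hat{G}_k(\bar{\x})\coloneqq x_0^{2d_k}G_k(\x/x_0)$, where $d_k=\lceil\deg G_k/2\rceil$, and I set $\hat{\mathbf{K}}\coloneqq\{\bar{\x}\in\R^{n+1}\mid \hat{G}_k(\bar{\x})\succeq0,\ k\in[m]\}$. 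A first observation, needed to invoke Theorem~\ref{sec6:thm2}, is that homogenization acts entrywise and only rescales each block by a nonzero power of $x_0$; it therefore leaves the zero/nonzero block pattern of $F$ unchanged, so $\hat{F}$ has the same chordal sparsity graph and the same maximal cliques $\mC_1,\ldots,\mC_t$ as $F$.

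The crux is to check that $\hat{F}$ is strictly positive definite on $\hat{\mathbf{K}}\setminus\{\mathbf{0}\}$. I would argue by cases on $x_0$. If $\bar{\x}\in\hat{\mathbf{K}}$ has $x_0\neq0$, then $x_0^{2d_k}>0$ gives $\hat{G}_k(\bar{\x})\succeq0\iff G_k(\x/x_0)\succeq0$, so $\x/x_0\in\mathbf{K}$; the hypothesis that $F\succ0$ on $\mathbf{K}$ together with $x_0^{2d}>0$ then yields $\hat{F}(\bar{\x})=x_0^{2d}F(\x/x_0)\succ0$. If instead $x_0=0$ and $\x\neq\mathbf{0}$, then $\hat{F}(0,\x)$ equals the highest-degree homogeneous part of $F$, which is strictly positive definite on $\R^n\setminus\{\mathbf{0}\}$ by assumption. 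Combining the two cases gives $\hat{F}\succ0$ on $\hat{\mathbf{K}}\setminus\{\mathbf{0}\}$, and $\hat{\mathbf{K}}\setminus\{\mathbf{0}\}$ is nonempty since any $\x\in\mathbf{K}$ lifts to $(1,\x)\neq\mathbf{0}$ (the case $\mathbf{K}=\emptyset$ being degenerate). This step is the main obstacle: neither hypothesis alone suffices, and the leading-form condition is imposed precisely to control the slice $x_0=0$, where the $x_0\neq0$ argument degenerates.

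With the hypotheses of Theorem~\ref{sec6:thm2} verified, I obtain an integer $\tau\ge0$ and homogeneous SOS matrices $\hat{S}_{k,i}(\bar{\x})$ of the stated sizes with
\begin{equation*}
\|\bar{\x}\|^{2\tau}\hat{F}(\bar{\x})=\sum_{i=1}^tE_{\mC_i}^{\intercal}\Big(\hat{S}_{0,i}(\bar{\x})+\sum_{k=1}^m\big\langle \hat{S}_{k,i}(\bar{\x}),\hat{G}_k(\bar{\x})\big\rangle_p\Big)E_{\mC_i},
\end{equation*}
where $\|\bar{\x}\|^2=x_0^2+\|\x\|^2$. Finally I would dehomogenize by setting $x_0=1$: this recovers $\hat{F}(1,\x)=F(\x)$ and $\hat{G}_k(1,\x)=G_k(\x)$, turns each $\hat{S}_{k,i}(1,\x)$ into a genuine SOS matrix of the same size (specializing one variable of an SOS matrix preserves the SOS property), and sends the multiplier to $(1+\|\x\|^2)^{\tau}$, yielding the claimed representation. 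The only routine points left to spell out are the bookkeeping of homogenization degrees and the fact that the $p$-product $\langle\cdot,\cdot\rangle_p$ commutes with the specialization $x_0=1$.
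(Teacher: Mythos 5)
Your proof is correct and follows essentially the same route as the paper, which omits the argument and defers to Corollary 2.2 of \cite{zheng2023sum} (with $\langle\cdot,\cdot\rangle$ replaced by $\langle\cdot,\cdot\rangle_p$): that reference proves the corollary exactly by your homogenization device — adjoin $x_0$, verify via the two-case analysis on $x_0$ that the hypotheses of the homogeneous result (here Theorem \ref{sec6:thm2}) hold on the lifted cone, then specialize $x_0=1$. One remark: your dehomogenization yields the multiplier $(1+\|\x\|^2)^{\tau}$, which is what the statement should read — the displayed $(1+\|\x\|)^{2\tau}$ is evidently a typo, since it is not a polynomial while the right-hand side is.
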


Then, the decomposed optimization problem for \eqref{eq::mPMI} based on Theorems \ref{sec6:thm1}, \ref{sec6:thm2}, Corollary \ref{sec6:thm3} reads as
\begin{align*}
\sup\lambda\quad\mathrm{s.t.}\quad F(\x)-\lambda&=\sum_{i=1}^tE_{\mC_i}^{\intercal}\left(S_{0,i}(\x)+\sum_{k=1}^m\left\langle S_{k,i}(\x),G_k(\x)\right\rangle_{p}\right)E_{\mC_i},  \\  
\sup\lambda\quad\mathrm{s.t.}\quad \|\x\|^{2\tau}(F(\x)-\lambda)&=\sum_{i=1}^tE_{\mC_i}^{\intercal}\left(S_{0,i}(\x)+\sum_{k=1}^m\left\langle S_{k,i}(\x),G_k(\x)\right\rangle_{p}\right)E_{\mC_i}, \\   
\sup\lambda\quad\mathrm{s.t.}\quad (1+\|\x\|)^{2\tau}(F(\x)-\lambda)&=\sum_{i=1}^tE_{\mC_i}^{\intercal}\left(S_{0,i}(\x)+\sum_{k=1}^m\left\langle S_{k,i}(\x),G_k(\x)\right\rangle_{p}\right)E_{\mC_i},    
\end{align*}
respectively.

\subsection{Constraint matrix sparsity}
We now consider the case that the constraint matrix $G(\x)$ has a chordal sparsity graph $\mathcal{G}$. 
One way to exploit constraint matrix sparsity is decomposing a single PMI constraint into multiple ones of smaller sizes according to its sparsity pattern (by Theorem \ref{th::spositive2}), which most often brings additional correlative sparsity to exploit. Let us illustrate this method by a concrete example.

\begin{exmp}
Consider the PMO problem
\begin{equation}\label{sec6:ex1}
\inf_{\x\in\R^5}\lambda_{\min}(F(\x))\quad\mathrm{s.t.}\quad G(\x)\succeq 0
\end{equation}
with
\begin{equation}
F(\x) = \begin{bmatrix}x_1 & x_2 & x_3\\
x_2 &  1 & x_4\\
x_3 & x_4 & x_5\end{bmatrix}
\text{ and }\,\,
G(\x) = \begin{bmatrix}1-x_1^2-x_2^2-x_3^2 & x_1x_2x_3 & 0\\
x_1x_2x_3 &  x_3 & x_3x_4x_5\\
0 & x_3x_4x_5 & 1-x_3^2-x_4^2-x_5^2\end{bmatrix}.
\end{equation}
By introducing an auxiliary variable $x_6$ and Theorem \ref{th::spositive2}, we can decompose the PMI constraint $G(\x)\succeq 0$ as
\begin{equation*}
G_1(x_1,x_2,x_3,x_6) = \begin{bmatrix}1-x_1^2-x_2^2-x_3^2 & x_1x_2x_3 \\
x_1x_2x_3 & x_6^2
\end{bmatrix}\succeq 0,
G_2(x_3,x_4,x_5,x_6) =\begin{bmatrix}x_3 -x_6^2& x_3x_4x_5\\
x_3x_4x_5 & 1-x_3^2-x_4^2-x_5^2\end{bmatrix}\succeq 0
\end{equation*}
such that $G\succeq0$ if and only if $G_1\succeq0, G_2\succeq 0$. Then \eqref{sec6:ex1} becomes
\begin{equation}
\inf_{\x\in\R^6}\lambda_{\min}(F(\x))\quad\mathrm{s.t.}\quad G_1(x_1,x_2,x_3,x_6)\succeq 0,\quad G_2(x_3,x_4,x_5,x_6)\succeq 0,
\end{equation}
which exhibits a CSP: $\mI_1=\{1,2,3,6\}$ and $\mI_2=\{3,4,5,6\}$.
\end{exmp}

In the appendix, we provide an alternative way to exploit constraint matrix sparsity.}
\section{Numerical Examples}
\label{sec:examples}

The sparsity-adapted relaxations for PMO problems have been implemented in the Julia package {\tt TSSOS}\footnote{{\tt TSSOS} is freely available at \href{https://github.com/wangjie212/TSSOS}{https://github.com/wangjie212/TSSOS}.} \cite{magron2021tssos}. In this section, we provide numerical examples to illustrate the efficiency of our approach with {\tt TSSOS} 1.4.4 where {\tt Mosek} 11.0 \cite{mosek} is employed as an SDP solver with default settings. When presenting the results in tables, the column labelled by `mb' records the maximal PSD block size of SDP relaxations, the column labelled by `bound' records lower bounds given by SDP relaxations, and the column labelled by `time' records computational time in seconds. Moreover, $r$ stands for the relaxation order, $s$ stands for the sparse order, and the symbol `-' indicates that {\tt Mosek} runs out of memory. For brevity, TS denotes term sparsity, CS denotes correlative sparsity, and MS denotes matrix sparsity.
\rev{All numerical experiments were performed on a desktop computer with an Intel(R) Core(TM) i9-14900 CPU@2.00 GHz and 128G RAM\footnote{The script for reproducing the results is available at \url{https://github.com/wangjie212/TSSOS/blob/master/example/pmi.jl}.}.}

\subsection{Term sparse examples}

Our first example involves the objective matrix 
\begin{equation}
F(\x)=\begin{bmatrix}
x_1^4&x_1^2-x_2x_3&x_3^2-x_4x_5&x_1x_4&x_1x_5\\
x_1^2-x_2x_3&x_2^4&x_2^2-x_3x_4&x_2x_4&x_2x_5\\
x_3^2-x_4x_5&x_2^2-x_3x_4&x_3^4&x_4^2-x_1x_2&x_5^2-x_3x_5\\
x_1x_4&x_2x_4&x_4^2-x_1x_2&x_4^4&x_4^2-x_1x_3\\
x_1x_5&x_2x_5&x_5^2-x_3x_5&x_4^2-x_1x_3&x_5^4\\
\end{bmatrix}
\end{equation}
and constraint matrices 
\begin{equation}
G_1(\x)=\begin{bmatrix}
1-x_1^2-x_2^2&x_2x_3\\
x_2x_3&1-x_3^2
\end{bmatrix},
\quad
G_2(\x)=\begin{bmatrix}
1-x_4^2&x_4x_5\\
x_4x_5&1-x_5^2
\end{bmatrix}.
\end{equation}
The considered optimization problem aims to minimize the minimum eigenvalue of $F$ over the region defined by $G_1,G_2$:
\begin{equation}\label{eq:example_1}
    \inf_{\x \in \R^5} \lambda_{\min}(F(\x))\quad \text{ s.t. } \quad G_1(\x) \succeq 0, \,G_2(\x) \succeq 0.
\end{equation}

Table \ref{tab::1} reports the numerical results of this example, where we compare the dense approach with the sparse approach (exploiting term sparsity with block/chordal closures). It can be seen that except the case of performing chordal closures with $s=2$\footnote{\rev{The chordal closure yields PSD blocks with overlaps. When such overlaps are significant, the computation may get slow.}}, the sparse approach is several times faster than the dense approach while yielding the same lower bound $-2.2766$. The bound $-2.2766$ is certified as the global optimum by checking the flatness condition. For this example, the term sparsity iteration with block closures stabilizes at $s=2$.

\begin{table}[htb]\caption{Results for Problem \eqref{eq:example_1}.}
\label{tab::1}
\centering
\begin{tabular}{ccccccccccccc}
\midrule[0.8pt]
&&\multicolumn{3}{c}{$r=2$}&&\multicolumn{3}{c}{$r=3$}&&\multicolumn{3}{c}{$r=4$}\\
\cline{3-5} \cline{7-9}\cline{11-13}
&&mb& bound & time&&mb&bound & time&&mb&bound & time\\
\midrule[0.4pt]
Dense&&105&-2.2766&0.31&&280&-2.2766&5.34&&630&-2.2766&99.7\\
TS (block, $s=1$)&&50&-2.2766&0.14&&132&-2.2766&1.97&&330&-2.2766&30.3\\
TS (block, $s=2$)&&80&-2.2766&0.15&&200&-2.2766&2.39&&430&-2.2766&35.7\\
TS (chordal, $s=1$)&&48&-2.2766&0.18&&58&-2.2766&0.51&&58&-2.2766&0.93\\
TS (chordal, $s=2$)&&80&-2.2766&0.29&&200&-2.2766&8.05&&430&-2.2766&225\\
\midrule[0.8pt]
\end{tabular}
\end{table}

Next for a positive integer $p\in\{20,40,60\}$, we consider the problem
\begin{equation}\label{eq:example_3}
    \inf_{\x \in \R^3} \lambda_{\min}(F(\x))\quad \text{ s.t. } \quad G(\x) \succeq 0
\end{equation}
with 
\begin{equation}
    F(\x)=(1-x_1^2-x_2^2)I_p+(x_1^2-x_3^2)A+(x_1^2x_3^2-2x_2^2)B,
\end{equation}
and
\begin{equation}
G(\x)=\begin{bmatrix}
1-x_1^2-x_2^2&x_2x_3\\
x_2x_3&1-x_3^2
\end{bmatrix}.
\end{equation}
The matrices $A, B\in \psd^p$ are generated with entries being drawn randomly from the uniform distribution on $(0, 1)$. 
Table \ref{tab::2} reports the numerical results of this problem, where we compare the dense approach with the sparse approach (exploiting term sparsity with block or chordal closures). It can be seen that the sparse approach is significantly faster and more scalable than the dense approach while yielding the same lower bounds. For this problem, the term sparsity iteration with block closures stabilizes at $s=2$.

\begin{table}[htb]\caption{Results for Problem \eqref{eq:example_3} with $r=2$.}
\label{tab::2}
\centering
\begin{tabular}{ccccccccccccc}
\midrule[0.8pt]
&&\multicolumn{3}{c}{$p=20$}&&\multicolumn{3}{c}{$p=40$}&&\multicolumn{3}{c}{$p=60$}\\
\cline{3-5} \cline{7-9}\cline{11-13}
&&mb& bound & time&&mb&bound & time&&mb&bound & time\\
\midrule[0.4pt]
Dense&&200&-20.259&6.13&&400&-39.388&185&&600&-59.560&1756\\
TS (block, $s=1$)&&80&-20.259&0.37&&160&-39.388&7.76&&240&-59.560&55.4\\
TS (block, $s=2$)&&80&-20.259&0.33&&160&-39.388&7.05&&240&-59.560&51.7\\
TS (chordal, $s=1$)&&60&-20.259&0.36&&120&-39.388&8.41&&180&-59.560&68.1\\
TS (chordal, $s=2$)&&80&-20.259&0.37&&160&-39.388&12.6&&240&-59.560&55.1\\
\midrule[0.8pt]
\end{tabular}
\end{table}

\subsection{Correlatively sparse examples}

Consider the problem
\begin{equation}\label{eq:example_6}
    \inf_{\x \in \R^5} \lambda_{\min}(F(\x))\quad \text{ s.t. } \quad G_1(\x) \succeq 0, G_2(\x) \succeq 0
\end{equation}
with
\begin{equation}
F(\x)=\begin{bmatrix}
x_1^4&x_1^2-x_2x_3&x_3^2-x_4x_5&0.5&0.5\\
x_1^2-x_2x_3&x_2^4&x_2^2-x_3x_4&0.5&0.5\\
x_3^2-x_4x_5&x_2^2-x_3x_4&x_3^4&x_4^2-x_1x_2&x_5^2-x_3x_4\\
0.5&0.5&x_4^2-x_1x_2&x_4^4&x_4^2-x_1x_3\\
0.5&0.5&x_5^2-x_3x_4&x_4^2-x_1x_3&x_5^4\\
\end{bmatrix}
\end{equation}
and
\begin{equation}
G_1(\x)=\begin{bmatrix}
1-x_1^2-x_2^2&x_2x_3\\
x_2x_3&1-x_3^2
\end{bmatrix},
\quad
G_2(\x)=\begin{bmatrix}
1-x_4^2&x_4x_5\\
x_4x_5&1-x_5^2
\end{bmatrix},
\end{equation}
which exhibits a CSP: $\mI_1=\{1,2,3\}$, $\mI_2=\{3,4\}$, and $\mI_3=\{4,5\}$.

Table \ref{tab::7} reports the numerical results of this example, where we compare the dense approach with three sparse approaches: exploiting correlative sparsity, exploiting both correlative sparsity and term sparsity with block or chordal closures. It could be seen that the approach exploiting correlative sparsity runs much faster than the dense approach without sacrificing the accuracy, and taking term sparsity into account brings some extra speed-up.

\begin{table}[htb]\caption{Results for Problem \eqref{eq:example_6}.}
\label{tab::7}
\centering
\resizebox{\linewidth}{!}{
\begin{tabular}{ccccccccccccccccc}
\midrule[0.8pt]
\multirow{2}{*}{$r$} &&\multicolumn{3}{c}{Dense}&&\multicolumn{3}{c}{CS}&&\multicolumn{3}{c}{CS + Block}&&\multicolumn{3}{c}{CS + Chordal} \\
\cline{3-5} \cline{7-9}\cline{11-13}\cline{15-17}
&&mb& bound & time&&mb&bound & time&&mb&bound & time&&mb&bound & time \\
\midrule[0.4pt]
2&&105&-2.4131&0.31&&50&-2.4131&0.06&&26&-2.4131&0.03&&12&-2.4148&0.02\\
3&&280&-2.4131&5.27&&100&-2.4131&0.26&&62&-2.4131&0.10&&16&-2.4131&0.04\\
4&&630&-2.4131&121&&200&-2.4131&1.23&&74&-2.4131&0.23&&16&-2.4131&0.08\\
\midrule[0.8pt]
\end{tabular}}
\end{table}

The next example involves a PMO problem with an increasing number of variables. For a given integer $n\ge3$, let 
\begin{equation}
F(\x)=\begin{bmatrix}
\sum_{k=1}^{n-2}x_k^2&\sum_{k=1}^{n-1}x_kx_{k+1}&1\\
\sum_{k=1}^{n-1}x_kx_{k+1}&\sum_{k=2}^{n-1}x_k^2&\sum_{k=1}^{n-2}x_kx_{k+2}\\
1&\sum_{k=1}^{n-2}x_kx_{k+2}&\sum_{k=3}^{n}x_{k}^2
\end{bmatrix}
\end{equation}
and
\begin{equation}
G_k(\x)=\begin{bmatrix}
1-x_k^2-x_{k+1}^2&x_{k+1}+0.5\\
x_{k+1}+0.5&1-x_{k+2}^2
\end{bmatrix},
\quad
k \in[n-2],
\end{equation}
and consider the optimization problem
\begin{equation}\label{eq:example_2}
    \inf_{\x \in \R^n} \lambda_{\min}(F(\x))\quad \text{ s.t. } \quad G_k(\x) \succeq 0,\,k \in[n-2].
\end{equation}

Problem \eqref{eq:example_2} possesses a CSP: $\mI_k=\{k,k+1,k+2\}, k \in[n-2]$.
Table \ref{tab::10} reports the related numerical results with $r=2$ as $n$ increases. As for the previous example, we compare the dense approach with three sparse approaches: exploiting correlative sparsity, exploiting both correlative sparsity and term sparsity with block or chordal closures. We could see that the approach exploiting correlative sparsity yields the same bounds as the dense approach but in much less time (especially when the number of variables $n$ is large). Taking term sparsity into account could further reduce computational time but provides looser bounds.

\begin{table}[htb]\caption{Results for Problem \eqref{eq:example_2} with $r=2$.}
\label{tab::10}
\centering
\begin{tabular}{ccccccccccccccccc}
\midrule[0.8pt]
\multirow{2}{*}{$n$} &&\multicolumn{3}{c}{Dense}&&\multicolumn{3}{c}{CS}&&\multicolumn{3}{c}{CS + Block}&&\multicolumn{3}{c}{CS + Chordal} \\
\cline{3-5} \cline{7-9}\cline{11-13}\cline{15-17}
&&mb& bound & time&&mb&bound & time&&mb&bound & time&&mb&bound & time \\
\midrule[0.4pt]
$5$&&63&-1.0247&0.09&&30&-1.0247&0.03&&16&-1.1931&0.02&&5&-1.5918&0.02\\
$7$&&108&-1.1157&0.38&&30&-1.1157&0.07&&16&-1.3072&0.03&&5&-1.9648&0.03\\
$9$&&165&-1.3891&2.42&&30&-1.3891&0.07&&16&-1.4522&0.04&&5&-2.3454&0.05\\
$11$&&234&-1.7620&9.06&&30&-1.7620&0.09&&16&-1.7620&0.05&&5&-2.7288&0.06\\
$13$&&315&-2.1403&38.2&&30&-2.1403&0.11&&16&-2.1403&0.05&&5&-3.1136&0.08\\
\midrule[0.8pt]
\end{tabular}
\end{table}

\subsection{Matrix sparse examples}
In this subsection, we solve four PMO problems with matrix sparsity.
The first problem is
\begin{equation}\label{eq:example_7}
    \inf_{\x \in \R^5} \lambda_{\min}(F(\x))\quad \text{ s.t. } \quad G_1(\x) \succeq 0, G_2(\x) \succeq 0
\end{equation}
with 
\begin{equation}
F(\x)=\begin{bmatrix}
x_1^4&x_1^2-x_2x_3&x_3^2-x_4x_5&&\\
x_1^2-x_2x_3&x_2^4&x_2^2-x_3x_4&&\\
x_3^2-x_4x_5&x_2^2-x_3x_4&x_3^4&x_4^2-x_1x_2&x_5^2-x_3x_4\\
&&x_4^2-x_1x_2&x_4^4&x_4^2-x_1x_3\\
&&x_5^2-x_3x_4&x_4^2-x_1x_3&x_5^4\\
\end{bmatrix}
\end{equation}
and
\begin{equation}
G_1(\x)=\begin{bmatrix}
1-x_1^2-x_2^2&x_2x_3\\
x_2x_3&1-x_3^2
\end{bmatrix},
\quad
G_2(\x)=\begin{bmatrix}
1-x_4^2&x_4x_5\\
x_4x_5&1-x_5^2
\end{bmatrix}.
\end{equation}

Table \ref{tab::8} reports the related numerical results of Problem \eqref{eq:example_7}, where we compare the dense approach with three sparse approaches: exploiting matrix sparsity, exploiting both matrix sparsity and term sparsity with block or chordal closures. We could see that the three sparse approaches all yield the same bound as the dense approach but in much less time, and exploiting term sparsity brings further speed-up.

\begin{table}[htb]\caption{Results for Problem \eqref{eq:example_7}.}
\label{tab::8}
\centering
\resizebox{\linewidth}{!}{
\begin{tabular}{ccccccccccccccccc}
\midrule[0.8pt]
\multirow{2}{*}{$r$} &&\multicolumn{3}{c}{Dense}&&\multicolumn{3}{c}{MS}&&\multicolumn{3}{c}{MS + Block}&&\multicolumn{3}{c}{MS + Chordal} \\
\cline{3-5} \cline{7-9}\cline{11-13}\cline{15-17}
&&mb& bound & time&&mb&bound & time&&mb&bound & time&&mb&bound & time \\
\midrule[0.4pt]
2&&105&-2.4180&0.28&&63&-2.4180&0.11&&33&-2.4180&0.06&&18&-2.4180&0.05\\
3&&280&-2.4180&5.35&&168&-2.4180&1.71&&90&-2.4180&0.60&&27&-2.4180&0.17\\
4&&630&-2.4180&118&&378&-2.4180&23.8&&198&-2.4180&8.54&&63&-2.4180&0.83\\
\midrule[0.8pt]
\end{tabular}}
\end{table}

Now we consider 
\begin{equation}\label{eq:example_4}
    \inf_{\x \in \R^5} \lambda_{\min}(F(\x))\quad \text{ s.t. } \quad G(\x) \succeq 0
\end{equation}
with
\begin{equation}\label{eq1:example_4}
F(\x)=\begin{bmatrix}
x_1^4+x_2^4+1&x_1x_3\\
x_1x_3&x_3^4+x_4^4+x_5^4+0.5
\end{bmatrix}
\end{equation}
and
\begin{equation}\label{eq2:example_4}
G(\x)=\begin{bmatrix}
1-x_1^2&x_1x_2&x_1x_3&&\\ 
x_1x_2&1-x_2^2&x_2x_3&&\\
x_1x_3&x_2x_3&1-x_3^2&x_3x_4&x_3x_5\\
&&x_3x_4&1-x_4^2&x_4x_5\\ 
&&x_3x_5&x_4x_5&1-x_5^2
\end{bmatrix}.
\end{equation}
By introducing a new variable $x_6$, the PMI constraint $G(\x)\succeq0$ can be decomposed as 
\begin{equation*}
G_1(x_1,x_2,x_3,x_6)=\begin{bmatrix}
1-x_1^2&x_1x_2&x_1x_3\\ 
x_1x_2&1-x_2^2&x_2x_3\\
x_1x_3&x_2x_3&x_6^2
\end{bmatrix}\succeq0,
G_2(x_3,x_4,x_5,x_6)=\begin{bmatrix}
1-x_3^2-x_6^2&x_3x_4&x_3x_5\\
x_3x_4&1-x_4^2&x_4x_5\\ 
x_3x_5&x_4x_5&1-x_5^2
\end{bmatrix}\succeq0
\end{equation*}
such that $G\succeq0$ if and only if $G_1\succeq 0, G_2\succeq 0$. 
Note that after the decomposition, the PMO problem exhibits a CSP: $\mI_1=\{1,2,3,6\}, \mI_2=\{3,4,5,6\}$.

Table \ref{tab::4} reports the related numerical results of Problem \eqref{eq:example_4}, where we compare the dense approach with three sparse approaches: exploiting both matrix sparsity and correlative sparsity, exploiting matrix sparsity, correlative sparsity, and term sparsity with block or chordal closures. It could be seen that the three sparse approaches all yield the same bound as the dense approach while in much less time (especially when $r\ge3$), and exploiting term sparsity brings extra speed-up.

\begin{table}[htb]\caption{Results for Problem \eqref{eq:example_4}.}
\label{tab::4}
\centering
\begin{tabular}{ccccccccccccccccc} 
\midrule[0.8pt]
\multirow{2}{*}{$r$} &&\multicolumn{3}{c}{Dense}&&\multicolumn{3}{c}{MS+CS}&&\multicolumn{3}{c}{MS+CS+Block}&&\multicolumn{3}{c}{MS+CS+Chordal} \\
\cline{3-5} \cline{7-9}\cline{11-13}\cline{15-17}
&&mb& bound & time&&mb&bound & time&&mb&bound & time&&mb&bound & time \\
\midrule[0.4pt]
$2$&&60&0.3977&0.05&&30&0.3977&0.04&&6&0.3977&0.01&&5&0.3977&0.01\\
$3$&&210&0.3977&0.82&&90&0.3977&0.17&&14&0.3977&0.02&&7&0.3977&0.02\\
$4$&&560&0.3977&4.94&&210&0.3977&1.55&&26&0.3977&0.05&&15&0.3977&0.05\\
\midrule[0.8pt]
\end{tabular}
\end{table}

We next consider a PMO problem adapted from \cite[Sec.~6]{kim2011exploiting}:
\begin{equation}\label{eq:example_9}
    \inf_{\x \in \R^n} \lambda_{\min}(F(\x))\quad \text{ s.t. } \quad G(\x) \succeq 0
\end{equation}
with
\begin{equation}
    F(\x)=\begin{bmatrix}
        1 &x_1x_2\\
        x_1x_2 &1+x_n^2
    \end{bmatrix}
\end{equation}
and
\begin{equation}
    G(\x)=\begin{bmatrix}
        1-x_1^4&0&0&\cdots&0&x_1x_2\\
        0&1-x_2^4&0&\cdots&0&x_2x_3\\
        0&0&\rev{1-x_3^4}&\cdots&0&x_3x_4\\
        \vdots&\vdots&\vdots&\ddots&\vdots&\vdots\\
        0&0&0&\cdots&1-x_{n-1}^4&x_{n-1}x_n\\
        x_1x_2&x_2x_3&x_3x_4&\cdots&x_{n-1}x_n&1-x_{n}^4\\
    \end{bmatrix}\succeq0.
\end{equation}
By introducing $n-2$ new variables $\{x_{n+1},\ldots,x_{2n-2}\}$, the PMI constraint $G(\x)\succeq0$ can be decomposed as 
\begin{equation*}
G_1(x_1,x_2,x_{n+1})=\begin{bmatrix}
1-x_1^4&x_1x_{2}\\ 
x_1x_{2}&x_{n+1}^2
\end{bmatrix}\succeq0,\quad
G_{n-1}(x_{n-1},x_n,x_{2n-2})=\begin{bmatrix}
1-x_{n-1}^4&x_{n-1}x_n\\ 
x_{n-1}x_n&1-x_n^4-x_{2n-2}^2
\end{bmatrix}\succeq0,
\end{equation*}
and
\begin{equation*}
G_i(x_i,x_{i+1},x_{n+i-1},x_{n+i})=\begin{bmatrix}
1-x_i^4&x_ix_{i+1}\\ 
x_ix_{i+1}&x_{n+i}^2-x_{n+i-1}^2
\end{bmatrix}\succeq0,\quad i=2,\ldots,n-2,
\end{equation*}
such that $G\succeq 0$ if and only if $G_{i}\succeq 0, i\in[n-1]$.
Note that after the decomposition, the PMO problem exhibits a CSP: $\mI_1=\{1,2,n+1\},\mI_i=\{i,i+1,n+i-1,n+i\} (i=2,\ldots,n-2), \mI_{n-1}=\{n-1,n,2n-2\}$.

Table \ref{tab::5} reports the related numerical results of Problem \eqref{eq:example_9} with $n\in\{5,7,9,11,13\}$, where we compare the dense approach with three sparse approaches: exploiting both matrix sparsity and correlative sparsity, exploiting matrix sparsity, correlative sparsity, and term sparsity with block or chordal closures. One could see that the three sparse approaches all yield the same bound as the dense approach while in much less time when $n\in\{5,7\}$, and scale much better with large $n$.

\begin{table}[htb]\caption{Results for Problem \eqref{eq:example_9} with $r=4$.}
\label{tab::5}
\centering
\begin{tabular}{ccccccccccccccccc}
\midrule[0.8pt]
\multirow{2}{*}{$n$} &&\multicolumn{3}{c}{Dense}&&\multicolumn{3}{c}{MS+CS}&&\multicolumn{3}{c}{MS+CS+Block}&&\multicolumn{3}{c}{MS+CS+Chordal} \\
\cline{3-5} \cline{7-9}\cline{11-13}\cline{15-17}
&&mb& bound & time&&mb&bound & time&&mb&bound & time&&mb&bound & time \\
\midrule[0.4pt]
$5$&&252&0.2138&2.81&&140&0.2138&0.80&&15&0.2138&0.03&&15&0.2138&0.03\\
$7$&&660&0.2138&138&&140&0.2138&1.23&&15&0.2138&0.05&&15&0.2138&0.05\\
$9$&&1430&-&-&&140&0.2138&2.15&&15&0.2138&0.07&&15&0.2138&0.07\\
$11$&&2730&-&-&&140&0.2138&2.64&&15&0.2138&0.09&&15&0.2138&0.09\\
$13$&&4760&-&-&&140&0.2138&3.24&&15&0.2138&0.10&&15&0.2138&0.10\\
\midrule[0.8pt]
\end{tabular}
\end{table}

Let $\omega$ be an even positive integer and consider the following optimization problem with a PMI constraint:
\begin{equation}\label{example_5}
\begin{cases}
\inf\limits_{\lambda_1,\lambda_2} &\lambda_2-10\lambda_1\\
\,\,\mathrm{s.t.}&P_{\omega}(\x)=\begin{bmatrix}
        \lambda_2x_1^4+x_2^4&\lambda_1x_1^2x_2^2&&&&&\\
        \lambda_1x_1^2x_2^2&\lambda_2x_2^4+x_3^4&\lambda_2x_2^2x_3^2&&&&\\
        &\lambda_2x_2^2x_3^2&\lambda_2x_3^4+x_1^4&\lambda_1x_1^2x_3^2&&&\\
        &&\lambda_1x_1^2x_3^2&\lambda_2x_1^4+x_2^4&\lambda_2x_1^2x_2^2&&\\
        &&&\lambda_2x_1^2x_2^2&\lambda_2x_2^4+x_3^4&\ddots&\\
        &&&&\ddots&\ddots&\lambda_1x_2^2x_3^2\\
        &&&&&\lambda_1x_2^2x_3^2&\lambda_2x_3^4+x_1^4\\
    \end{bmatrix}\succeq0,
\end{cases}
\end{equation}
where $P_{\omega}(\x)$ is a $3\omega\times3\omega$ tridiagonal polynomial matrix \cite{zheng2023sum}. The sparsity graph of $P_{\omega}(\x)$ has maximal cliques: $\mC_i=\{i,i+1\}, i\in[3\omega-1]$. By Theorem \ref{sec6:thm2}, Problem \eqref{example_5} can be reformulated as
\begin{equation}\label{zheng}
    \begin{cases}
\inf\limits_{\lambda_1,\lambda_2} &\lambda_2-10\lambda_1\\
\,\,\mathrm{s.t.}&\|\x\|^{2\tau}P_{\omega}(\x)=\sum_{i=1}^{3\omega-1}E_{\mC_i}^{\intercal}S_{i}(\x)E_{\mC_i},\\
&S_{i}(\x)\text{ is an SOS matrix}, \quad \forall i\in[3\omega-1].
\end{cases}
\end{equation}

Table \ref{tab::6} reports the related numerical results of Problem \eqref{example_5} with $\tau=3, r=5$ and $\omega\in\{10,20,30,40,50\}$, where we compare two sparse approaches: exploiting matrix sparsity and exploiting both matrix sparsity and term sparsity with block closures. We could see that the two sparse approaches always yield the same bounds and exploiting term sparsity further speeds up the computation by several times.

\begin{table}[htb]\caption{Results for Problem \eqref{example_5} with $\tau=3, r=5$.}
\label{tab::6}
\centering
\begin{tabular}{ccccccccc}
\midrule[0.8pt]
\multirow{2}{*}{$\omega$} &&\multicolumn{3}{c}{MS}&&\multicolumn{3}{c}{MS+TS} \\
\cline{3-5} \cline{7-9}
&&mb& bound & time&&mb&bound & time \\
\midrule[0.4pt]
$10$&&112&-9.0933&4.01&&20&-9.0933&0.47\\
$20$&&112&-9.0240&8.32&&20&-9.0240&0.87\\
$30$&&112&-9.0108&11.2&&20&-9.0108&1.36\\
$40$&&112&-9.0061&13.3&&20&-9.0061&1.70\\
$50$&&112&-9.0039&16.4&&20&-9.0039&2.13\\
\midrule[0.8pt]
\end{tabular}
\end{table}

\rev{
\subsection{Randomly generated sparse examples}
Finally, we test randomly generated sparse examples. Given $t\in\N\setminus\{0\}$, let $\mI_i=\{3i-2,\ldots,3i+2\}$ and let $F_i(\x(\mI_i)),G_i(\x(\mI_i))\in\mS^4[\x(\mI_i)]$ be random polynomial matrices whose entries are sparse quadratic polynomials with coefficients randomly chosen according to the standard normal distribution for $i=1,\ldots,t$. Then we consider the following PMO problem:
\begin{equation}\label{example_10}
\begin{cases}
\inf\limits_{\x \in \R^{3t+2}} &\lambda_{\min}\left(\sum_{i=1}^tF_i(\x(\mI_i))\right)\\
\quad\mathrm{s.t.}&G_i(\x(\mI_i)) + 2\sum_{j\in\mI_i}x_j^2\cdot I_4\succeq 0,\quad i=1,\ldots,t,\\
&1- \sum_{j\in\mI_i}x_j^2\ge 0,\quad i=1,\ldots,t.
\end{cases}
\end{equation}

Table \ref{tab::9} reports the related numerical results of Problem \eqref{example_10} with $t\in\{2,4,6,8,10\}$, where we compare the dense approach with three sparse approaches: exploiting correlative sparsity, exploiting both correlative sparsity and term sparsity with block or chordal closures. One could see that the three sparse approaches run much faster and scale significantly better than the dense approach while providing slightly weaker bounds.

\begin{table}[htb]\caption{Results for Problem \eqref{example_10} with $r=2$.}
\label{tab::9}
\centering
\begin{tabular}{ccccccccccccccccc}
\midrule[0.8pt]
\multirow{2}{*}{$t$} &&\multicolumn{3}{c}{Dense}&&\multicolumn{3}{c}{CS}&&\multicolumn{3}{c}{CS+block}&&\multicolumn{3}{c}{CS+chordal} \\
\cline{3-5} \cline{7-9}\cline{11-13}\cline{15-17}
&&mb& bound & time&&mb&bound & time&&mb&bound & time&&mb&bound & time \\
\midrule[0.4pt]
$2$&&180&-2.4648&2.86&&96&-2.4648&0.41&&27&-2.5195&0.07&&7&-2.5722&0.06\\
$4$&&480&-3.5626&365&&96&-3.7565&0.89&&80&-3.8478&0.41&&14&-4.1282&0.20\\
$6$&&924&-&-&&96&-4.8454&1.50&&80&-4.9889&0.57&&8&-5.5581&0.19\\
$8$&&1512&-&-&&96&-6.5244&2.07&&80&-6.5449&0.88&&14&-7.0377&0.40\\
$10$&&2244&-&-&&96&-7.8778&3.18&&80&-7.8858&1.16&&13&-8.2506&0.46\\
\midrule[0.8pt]
\end{tabular}
\end{table}}

\section{Conclusions}
\label{sec:conclusion}
This paper \rev{has explored} the use of various sparsity methods in reducing the size of matrix Moment-SOS relaxations for verification of \acp{PMI} and solving \ac{PMO} problems. We have showed that multiple sparsity structures, \rev{e.g., term sparsity, correlative sparsity, and matrix sparsity,} could be simultaneously exploited to maximize \rev{the dimensionality reduction of the derived \ac{SDP} in the PMO setting}. These methods make the matrix Moment-SOS hierarchy more capable of tackling practical applications, as illustrated \rev{by extensive numerical experiments in which structured PMO problems can be tractably solved whereas the dense hierarchy mostly often runs out of memory.}
The sparsity routines introduced in this paper were incorporated into the open-source {\tt TSSOS} package, and are thus available to interested practitioners in fields such as optimization, control, and operations research. 


\section*{Acknowledgements}
We sincerely thank the Associate Editor and reviewers for their insightful comments, which have significantly improved the manuscript.
The authors thank Shenyuan Ma, Yang Zheng,  Mareike Dressler, and Timo de Wolff for discussions about term sparsity patterns in \acp{PMI}, and thank Roy S. Smith for his advice, support, and funding.
\bibliographystyle{IEEEtran}
\bibliography{references,spmi}

\appendix
\newpage

\section{An alternative exploitation of constraint matrix sparsity}\label{app1}
Here, we provide another way to exploit constraint matrix sparsity.
We first extend Theorem \ref{th::spositive} towards block-partitioned matrices.
Suppose that $A\in\R^{pq\times pq}$ is a block matrix of form
\[A=\left[
		\begin{array}{cccc}
			A_{11}	& A_{12} & \cdots & A_{1p}\\
			A_{21}	& A_{22} & \cdots & A_{2p}\\
			\vdots & \vdots & \ddots & \vdots\\
			A_{p1}	& A_{p2} & \cdots & A_{pp}
		\end{array}
	\right],
\]
where each block $A_{ij}\in\R^{q\times q}$,
$i,j=1,\ldots,p$. 
Given an undirected graph $\mG(\vs,\mE)$ with nodes
$\vs=\{1,\ldots,p\}$,
we define the set of $q$-partitioned sparse symmetric matrices as
\[
	\psd^{pq}_{q}(\mG,0)\coloneqq\{A\in\psd^{pq} \mid
	A_{ij}=A_{ji}^{\intercal}=0, \text{ if } i\neq j  \text{ and } \{i,j\}\not\in\mE\},
\] 
and define the cone of $q$-partitioned completable \ac{PSD} matrices as
\begin{align*}
\psd_{q,+}^{pq}(\mG,?)\coloneqq\Pi^q_{\mG}(\mS_+^{pq})=\{\Pi^q_{\mG}(A)\mid A\in\mS_+^{pq}\},
\end{align*}
where $\Pi^q_{\mG}:\psd^{pq}\rightarrow\psd^{pq}_{q}(\mG,0)$ is the projection given by 
\begin{equation}
[\Pi_{\mG}(A)]_{ij}=\begin{cases}
A_{ij}, &\textrm{if }i=j\textrm{ or }\{i,j\}\in \mE,\\
0, &\textrm{otherwise}.
\end{cases}
\end{equation}
Given any maximal clique $\mC_i$ of $\mG$, define the block-wise index matrix 
$E_{\mC_i,q}\in\R^{|\mC_i|q\times pq}$ by 
\[
	[E_{\mC_i,q}]_{jk}=\left\{\begin{array}{ll}
		I_{q}, &\text{if }	\mC_i(j)=k,\\
		0, &\text{otherwise}.
	\end{array}\right.
\]
The following theorem extends Theorem
\ref{th::spositive} to the case of block matrices.
\begin{thm}{\upshape(\cite[Theorem 2.18]{zheng2019chordal})}\label{th::bspositive}
Let $\mG(\vs,\mE)$ be a chordal graph and let
$\{\mC_1,\ldots,\mC_t\}$ be the set of its maximal cliques. Then $A\in\psd^{pq}_{q, +}(\mG, ?)$ if and only if $E_{\mC_i,q}AE^{\intercal}_{\mC_i,q}\in\psd_+^{|\mC_i|q}$ for all $i\in[t]$. 
\end{thm}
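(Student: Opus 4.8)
The plan is to reduce the block statement to the scalar completion theorem (Theorem \ref{th::spositive}) by passing to an auxiliary scalar graph on $pq$ nodes. Concretely, I would define the \emph{$q$-blow-up} $\widehat{\mG}$ of $\mG$: its node set is $[p]\times[q]$, and two distinct nodes $(i,a)$ and $(j,b)$ are adjacent precisely when $i=j$, or $\{i,j\}\in\mE$. Ordering the $pq$ indices lexicographically so that block $i$ of a matrix in $\psd^{pq}$ corresponds to the nodes $\{(i,1),\ldots,(i,q)\}$, one checks directly from the definitions that $\psd^{pq}_q(\mG,0)=\psd^{pq}(\widehat{\mG},0)$ and that the block projection $\Pi^q_{\mG}$ coincides with the scalar projection $\Pi_{\widehat{\mG}}$; hence the completable cones agree, $\psd^{pq}_{q,+}(\mG,?)=\psd^{pq}_+(\widehat{\mG},?)$.

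The key structural step is to verify that $\widehat{\mG}$ inherits chordality from $\mG$ and to pin down its maximal cliques. For chordality I would take a perfect elimination ordering $v_1,\ldots,v_p$ of $\mG$ (which exists because $\mG$ is chordal) and eliminate the nodes of $\widehat{\mG}$ cluster by cluster in that order, exhausting the $q$ copies of $v_1$, then of $v_2$, and so on. When a node $(v_k,a)$ is eliminated, its still-present neighbours are the remaining copies of $v_k$ together with all copies of the later neighbours of $v_k$ in $\mG$; since those later neighbours form a clique in $\mG$, their blow-up together with the copies of $v_k$ forms a clique in $\widehat{\mG}$, so the ordering is a perfect elimination ordering and $\widehat{\mG}$ is chordal. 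The same clique calculus shows that a subset of $[p]\times[q]$ is a maximal clique of $\widehat{\mG}$ if and only if it is of the form $\mC_i\times[q]$ for a maximal clique $\mC_i$ of $\mG$; under the lexicographic ordering, extracting such a clique submatrix is exactly the operation $A\mapsto E_{\mC_i,q}AE^{\intercal}_{\mC_i,q}$.

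With these identifications in hand, the theorem follows by applying Theorem \ref{th::spositive} to the chordal graph $\widehat{\mG}$: $A\in\psd^{pq}_+(\widehat{\mG},?)$ if and only if every clique submatrix $E_{\widehat{\mC}_i}AE^{\intercal}_{\widehat{\mC}_i}$ is PSD, which translates verbatim into the assertion that $A\in\psd^{pq}_{q,+}(\mG,?)$ if and only if $E_{\mC_i,q}AE^{\intercal}_{\mC_i,q}\succeq 0$ for all $i\in[t]$. The easy ``only if'' direction can alternatively be seen directly, since for a clique $\mC_i$ every off-diagonal block between its nodes is retained by $\Pi^q_{\mG}$, so the clique submatrix of $A$ equals that of any PSD pre-image and is therefore PSD.

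The main obstacle I anticipate is the combinatorial bookkeeping in the blow-up correspondence: establishing chordality of $\widehat{\mG}$ and, above all, checking that its maximal cliques and the block-extraction matrices $E_{\mC_i,q}$ match the scalar objects under the lexicographic identification, so that Theorem \ref{th::spositive} applies without loss. Once this dictionary between $\mG$ and $\widehat{\mG}$ is set up carefully, the analytic content is carried entirely by the scalar completion result.
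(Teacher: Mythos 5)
Your proposal is correct: the $q$-blow-up reduction is sound, including the perfect-elimination-ordering argument showing the inflated graph inherits chordality, the identification of its maximal cliques as $\mC_i\times[q]$, and the observation that block extraction via $E_{\mC_i,q}$ matches scalar clique extraction (up to a harmless symmetric permutation), so Theorem \ref{th::spositive} applies directly. The paper itself gives no proof, quoting the result from \cite[Theorem 2.18]{zheng2019chordal}, and your argument is essentially the standard inflation argument by which that cited result is obtained from Grone's theorem, so there is nothing further to add.
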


The following proposition allows us to consider an appropriate PSD completable matrix for representing $\langle S(\x), G(\x)\rangle_p$ when $G(\x)$ is sparse.

\begin{prop}
Let $G(\x)\in\mS^q[\x]$ be a polynomial matrix that has a chordal sparsity graph $\mathcal{G}$, and let $S(\x)\in\mS^{pq}[\x]$ be an SOS matrix of degree $2d$. Then there exists $Q\in\mS^{pq|\bm_{d}(\x)|}_{p|\bm_{d}(\x)|,+}(\G,?)$
such that
\begin{equation}\label{eq::chordalSOS}
\langle S(\x), G(\x)\rangle_p =\left\langle (I_{pq}\otimes \bm_{d}(\x))^{\intercal} \tilde{Q} (I_{pq} \otimes \bm_{d}(\x)), G(\x)\right\rangle_p,
\end{equation}
where $\tilde{Q}$ is obtained from $Q$ by certain row and column permutations.
\end{prop}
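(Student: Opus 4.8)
The plan is to start from a \emph{dense} Gram representation of the \ac{SOS} matrix $S(\x)$ and then project its Gram matrix onto the sparsity pattern of $G(\x)$, exploiting the fact that the $p$-product only reads off those entries of $S$ that are aligned with nonzero entries of $G$. Since $S(\x)\in\mS^{pq}[\x]$ is an \ac{SOS} matrix of degree $2d$, the Gram characterization \eqref{eq:sos_pmi_uncons2} supplies a matrix $\hat{Q}\in\psd^{pq|\bm_d(\x)|}_+$ with
\[
S(\x)=(I_{pq}\otimes\bm_d(\x))^{\intercal}\hat{Q}(I_{pq}\otimes\bm_d(\x)).
\]
I would index the rows and columns of $\hat{Q}$ by triples $((a,k),m)$, where $a\in[p]$ is the outer index of $S$ viewed as $p\times p$ blocks of size $q\times q$, where $k\in[q]$ is the inner index on which the sparsity graph $\mG$ of $G$ lives, and where $m$ ranges over the monomials in $\bm_d(\x)$. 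With this convention the $(k,l)$-entry of the $q\times q$ block $[S]_{ab}$ equals $\bm_d(\x)^{\intercal}\hat{Q}^{((a,k),(b,l))}\bm_d(\x)$, where $\hat{Q}^{((a,k),(b,l))}$ is the $|\bm_d(\x)|\times|\bm_d(\x)|$ monomial block of $\hat{Q}$.

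The key observation is that $\langle S(\x),G(\x)\rangle_p$ depends only on those entries of $S$ lying in the $\mG$-pattern. Indeed, by the definition of the $p$-product its $(a,b)$ entry is $\tr{S_{ab}^{\intercal}G}=\sum_{k,l}[S_{ab}]_{kl}G_{kl}$, and $\mG$-sparsity of $G$ forces $G_{kl}=0$ whenever $k\ne l$ and $\{k,l\}\notin\mE$. Hence replacing $[S_{ab}]_{kl}$ by $0$ for every off-pattern pair $(k,l)$ leaves every entry of $\langle S,G\rangle_p$ unchanged. At the Gram level, simultaneously deleting $[S]_{(a,k),(b,l)}$ for all $a,b$ and all monomial pairs is exactly the effect of zeroing the $(k,l)$ blocks of $\hat{Q}$ indexed by off-pattern pairs.

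To land in the correct cone I would introduce the permutation $P$ that regroups $((a,k),m)\mapsto(k,(a,m))$, so that $Q_0\coloneqq P\hat{Q}P^{\intercal}\succeq0$ becomes a $q\times q$ block matrix whose blocks (of size $p|\bm_d(\x)|$) are indexed by the graph nodes $k,l\in[q]$. I then set $Q\coloneqq\Pi^q_{\mG}(Q_0)$, the block-wise projection onto the $\mG$-pattern. Since $Q_0\in\psd^{pq|\bm_d(\x)|}_+$, the very definition $\psd^{pq|\bm_d(\x)|}_{p|\bm_d(\x)|,+}(\mG,?)=\Pi^q_{\mG}(\mS^{pq|\bm_d(\x)|}_+)$ yields $Q\in\psd^{pq|\bm_d(\x)|}_{p|\bm_d(\x)|,+}(\mG,?)$, and $\tilde{Q}\coloneqq P^{\intercal}QP$ is the sought permuted matrix. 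By the observation of the previous paragraph, the polynomial matrix $(I_{pq}\otimes\bm_d(\x))^{\intercal}\tilde{Q}(I_{pq}\otimes\bm_d(\x))$ differs from $S(\x)$ only in off-pattern $(k,l)$ slots, so taking the $p$-product against $G(\x)$ recovers the identity \eqref{eq::chordalSOS}.

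The main obstacle is purely bookkeeping: one must verify that $P$ aligns the inner $q$-index of $S$ (on which $\mG$ acts) with the outer block structure implicit in $\psd^{pq|\bm_d(\x)|}_{p|\bm_d(\x)|,+}(\mG,?)$, and that the block-wise projection $\Pi^q_{\mG}$ applied to $Q_0$ corresponds exactly to the entrywise deletion in the $(k,l)$ slots of $S$. Once this correspondence is pinned down, completable-cone membership is immediate from $Q_0\succeq0$; notably, no chordality of $\mG$ is required for this existence statement, since chordality enters only afterwards through Theorem \ref{th::bspositive}, when $Q$ is decomposed into clique-indexed \ac{PSD} blocks to obtain a tractable \ac{SDP}.
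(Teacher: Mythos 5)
Your proposal is correct and follows essentially the same route as the paper's proof: take a dense Gram matrix from \eqref{eq:sos_pmi_uncons2}, observe that $\langle S(\x),G(\x)\rangle_p$ only reads the entries of $S$ aligned with the sparsity pattern of $G$ (the paper phrases this via the block sparsity of $G(\x)\otimes(\bm_d(\x)\bm_d(\x)^{\intercal})$), and then project the permuted Gram matrix onto the pattern, landing in the completable cone by its very definition. Your explicit permutation $((a,k),m)\mapsto(k,(a,m))$ simply spells out the bookkeeping the paper leaves implicit in ``up to certain row and column permutations,'' and your side remark that chordality is not needed for the existence statement itself (only later, via Theorem \ref{th::bspositive}) is accurate.
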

\begin{proof}
By \eqref{eq:sos_pmi_uncons2}, there exists $Q\in\mS_+^{pq|\bm_{d}(\x)|}$ such that 
\[
S(\x)=(I_{pq}\otimes \bm_{d}(\x))^{\intercal} Q (I_{pq} \otimes \bm_{d}(\x)). 
\]
Let $Q=[Q_{ij}]_{i,j\in[p]}$ with blocks 
$Q_{ij}=Q_{ji}^\intercal\in\R^{q|\bm_{d}(\x)|\times q|\bm_{d}(\x)|}$. 
By definition, we have
\begin{align*}
    \langle S(\x), G(\x)\rangle_p&=\begin{bmatrix}
        \langle S_{11}(\x), G(\x)\rangle&\ldots&\langle S_{1p}(\x), G(\x)\rangle\\
        \vdots&\ddots&\vdots\\
        \langle S_{p1}(\x), G(\x)\rangle&\ldots&\langle S_{pp}(\x), G(\x)\rangle\\
    \end{bmatrix}\\
    &=\begin{bmatrix}
        \langle G(x)\otimes(\bm_{d}(\x)\cdot\bm_{d}(\x)^{\intercal}), Q_{11}\rangle&\ldots&\langle G(x)\otimes(\bm_{d}(\x)\cdot\bm_{d}(\x)^{\intercal}), Q_{1p}\rangle\\
        \vdots&\ddots&\vdots\\
        \langle G(x)\otimes(\bm_{d}(\x)\cdot\bm_{d}(\x)^{\intercal}), Q_{p1}\rangle&\ldots&\langle G(x)\otimes(\bm_{d}(\x)\cdot\bm_{d}(\x)^{\intercal}), Q_{pp}\rangle\\
    \end{bmatrix}.
\end{align*}
Note that $G(\x)\otimes(\bm_{d}(\x)\cdot\bm_{d}(\x)^{\intercal})$ has a block sparse structure 
induced by the chordal sparsity of $G(\x)$. 
Therefore, up to certain row and column permutations, we may assume that $Q\in\mS^{pq|\bm_{d}(\x)|}_{p|\bm_{d}(\x)|,+}(\G,?)$.
\end{proof}

\end{document}